\pdfoutput=1
\documentclass[final]{siamltex}

\usepackage{amssymb,latexsym,amsfonts,mathrsfs}
\usepackage{amsmath}
\usepackage{mathtools}

\usepackage[ruled,lined,linesnumbered]{algorithm2e}

\usepackage{xcolor}

\usepackage[latin1]{inputenc}
\usepackage{enumitem}
\usepackage{setspace}

\usepackage{microtype}

\usepackage{graphicx}

\setcounter{secnumdepth}{4}
\setcounter{tocdepth}{2}

\numberwithin{equation}{section}

\newcommand{\R}{\mathbb{R}}

\newcommand{\N}{\mathbb{N}}


\newcommand{\mM}{\mathcal{M}}
\newcommand{\mN}{\mathcal{N}}

\newcommand{\mS}{\mathcal{S}}
\newcommand{\mD}{\mathcal{D}}
\newcommand{\mU}{{\mathcal{U}}}
\newcommand{\mV}{\mathcal{V}}

\newcommand{\mO}{\mathcal{O}}

\newcommand{\trans}{\mathsf{T}}

\newcommand{\hR}{\hat{R}}

\newcommand{\bbeta}{\bar{\beta}}
\newcommand{\balpha}{\bar{\alpha}}

\newcommand{\domain}{\mathcal{D}}

\newcommand{\frob}{{\mathsf{F}}}

\DeclareMathOperator{\dist}{dist}

\DeclareMathOperator{\tr}{trace}
\DeclareMathOperator{\ran}{ran}

\DeclareMathOperator*{\argmin}{argmin \,}


\title{Convergence results for projected line-search methods on varieties of low-rank matrices via \L{}ojasiewicz inequality}
\author{Reinhold Schneider\footnotemark[2] \and Andr{\'e} Uschmajew\footnotemark[3]}
\date{}

\begin{document}

\maketitle

\renewcommand{\thefootnote}{\fnsymbol{footnote}}
\footnotetext[2]{Institut f\"ur Mathematik, Technische Universit\"at Berlin, 10623 Berlin, Germany (schneidr@math.tu-berlin.de).
}
\footnotetext[3]{MATHICSE-ANCHP, \'Ecole Polytechnique F\'ed\'erale de Lausanne, 1015 Lausanne, Switzerland. Current address: Hausdorff Center for Mathematics \& Institute for Numerical Simulation, University of Bonn, 53115 Bonn, Germany (uschmajew@ins.uni-bonn.de).}
\renewcommand{\thefootnote}{\arabic{footnote}}

\begin{abstract}
The aim of this paper is to derive convergence results for projected line-search methods on the real-algebraic variety $\mM_{\le k}$ of real $m \times n$ matrices of rank at most $k$. Such methods extend Riemannian optimization methods, which are successfully used on the smooth manifold $\mM_k$ of rank-$k$ matrices, to its closure by taking steps along gradient-related directions in the tangent cone, and afterwards projecting back to $\mM_{\le k}$. Considering such a method circumvents the difficulties which arise from the nonclosedness and the unbounded curvature of $\mM_k$. The pointwise convergence is obtained for real-analytic functions on the basis of a \L{}ojasiewicz inequality for the projection of the antigradient to the tangent cone. If the derived limit point lies on the smooth part of $\mM_{\le k}$, i.e. in $\mM_k$, this boils down to more or less known results, but with the benefit that asymptotic convergence rate estimates (for specific step-sizes) can be obtained without an a priori curvature bound, 
simply from the fact that the limit lies on a smooth manifold. At the same time, one can give a convincing justification for assuming critical points to lie in $\mM_k$: if $X$ is a critical point of $f$ on $\mM_{\le k}$, then either $X$ has rank $k$, or $\nabla f(X) = 0$.
\end{abstract}

\begin{keywords}
Convergence analysis, line-search methods, low-rank matrices, Riemannian optimization, steepest descent, \L{}ojasiewicz gradient inequality, tangent cones
\end{keywords}

\begin{AMS}
65K06, 
40A05, 
26E05, 
65F30, 
15B99, 
15A83, 
\end{AMS}

\pagestyle{myheadings}
\thispagestyle{plain}
\markboth{R. SCHNEIDER AND A. USCHMAJEW}{LINE-SEARCH METHODS ON LOW-RANK MATRIX VARIETIES}

\section{Introduction}

This paper is concerned with line-search algorithms for low-rank matrix optimization. Let $k \le \min(m,n)$. The set 
\[
\mM_k = \{ X \in \R^{m \times n} \vcentcolon \rank(X) = k \}
\]
of real rank-$k$ matrices is a smooth submanifold of $\R^{m \times n}$. Thus, in order to approach a solution of
\begin{equation}\label{eq: smooth min problem}
\min_{X \in \mM_k} f(X),
\end{equation}
where $f \vcentcolon \R^{m \times n} \to \R$ is continuously differentiable,  one can use the algorithms known from Riemannian optimization, the simplest being the  steepest descent method
\begin{equation}\label{eq: gradient flow}
X_{n + 1} = R(X_n, \alpha_n \Pi_{T_{X_n} \mM_k} (- \nabla f(X_n))).
\end{equation}
Here, $\Pi_{T_{X_n} \mM_k}$ is the orthogonal projection on the tangent space at $X_n$, $\alpha_n \ge 0$ is a step-size, and $R$ is a \emph{retraction}, which takes vectors from the affine tangent plane back to the manifold~\cite{AbsilMahonySepulchre2008,Shub1986}. Riemannian optimization on $\mM_k$ (and other matrix manifolds) has become an important tool for low-rank approximation in several applications, e.g. solutions of matrix equations such as Lyapunov equations, model reduction in machine learning, low-rank matrix completion, and others; see, for instance,~\cite{CasonAbsilVanDooren2013,Mishraetal2013,Mishraetal2014,ShalitWeinshallChechik2012,VandereyckenVandewalle2010,Vandereycken2013}. Typically, methods more sophisticated  than steepest descent, such as nonlinear conjugate gradients, Newton's method, or line-search along geodesics, are employed. However, in most cases, convergence results of such line-search methods require the search directions to be sufficiently gradient-related.

An alternative interpretation of the projected gradient method~\eqref{eq: gradient flow} is that of a discretized gradient flow satisfying the Dirac--Frenkel variational principle, i.e., of the integration of the ODE
\[
\dot{X}(t) = \Pi_{T_{X(t)} \mM_k} (- \nabla f(X(t)))
\]
using Euler's explicit method with some step-size strategy. Therefore, our studies are also related to the growing field of dynamical low-rank approximation of ODEs~\cite{KochLubich2007,NonnenmacherLubich2008,LubichOseledets2013} that admit a strict Lyapunov function.

The convergence analysis of sequences in $\mM_k$ is hampered by the fact that this manifold is not closed in the ambient space $\R^{m \times n}$. The manifold properties break down at the boundary which consists of matrices of rank less than $k$. It might happen that a minimizing sequence for~\eqref{eq: smooth min problem} needs to cross such a singular point or even converge to it. Also, the effective domain of definition of a smooth retraction can become tiny at points close to singularities, leading to too small allowed step-sizes in theory. Even if these objections pose no serious problems in practice, they make it difficult to derive a priori convergence statements without making unjustified assumptions on the smallest singular values or adding regularization; cf.~\cite{KressnerSteinlechnerVandereycken2013,KochLubich2007,KochLubich2010,Lubichetal2013,Vandereycken2013}. 

It certainly would be more convenient to optimize and analyze on the closure
\begin{equation}\label{eq: bounded rank matrices}
\mM_{\le k} = \{ X \in \R^{m \times n} \vcentcolon \rank(X) \le k \} 
\end{equation}
of $\mM_k$, which is a real-algebraic variety. In many applications one will be satisfied with any solution of the problem
\begin{equation}\label{eq: min problem on bounded rank}
\min_{X \in \mM_{\le k}} f(X).
\end{equation}
There is no principal difficulty in devising line-search methods on $\mM_{\le k}$. First, in singular points, one has to use search directions in the tangent cone (instead of tangent space), for instance, a projection of the antigradient\footnote{We use this terminology for the negative gradient $- \nabla f$ throughout the paper.} on the tangent cone. The tangent cones of $\mM_{\le k}$ are explicitly known~\cite{CasonAbsilVanDooren2013}, and projecting on them is easy (see Theorem~\ref{prop: form of tangent cone} and Corollary~\ref{cor: projection of negative gradient on tangent cone}). Second, one needs a ``retraction'' that maps from the affine tangent cone back to $\mM_{\le k}$, a very natural choice being a metric projection
\begin{equation}\label{eq: retraction using  best approximation}
R(X_n + \Xi) \in \argmin_{Y \in \mM_{\le k}} \| Y -  (X_n + \Xi) \|_\frob
\end{equation}
(here in Frobenius norm), which can be calculated using singular value decomposition. The aim of this paper is to develop convergence results for such a method based on a \L{}ojasiewicz inequality for the projected antigradient.

Convergence analysis of gradient flows based on the \L{}ojasiewicz gradient inequality~\cite{Lojasiewicz1965}, or on the more general \L{}ojasiewicz--Kurdyka inequality~\cite{Kurdyka1998,Bolteetal2007,Bolteetal2010}, has attracted much attention in nonlinear optimization during recent years~\cite{AbsilMahonyAndrews2005,AttouchBolte2009,Attouchetal2010,AttouchBolteSvaiter2013,BolteDaniilidisLewis2007,CancesEhrlacherLelievre2014,Lageman2002,Lageman2007a,Lageman2007,Levitt2012,LiUschmajewZhang2015,MerletNguyen2013,Noll2014,XuYin2013}. In part, this interest seems to have been triggered by the paper~\cite{AbsilMahonyAndrews2005}, where the following theorem was proved.

\textsc{Theorem.} \textit{
Let $f \vcentcolon \R^N \to \R$ be continuously differentiable, and let $(x_n) \subseteq \R^N$ be a sequence of iterates satisfying the \emph{strong descent conditions}
\begin{gather}
f(x_{n+1}) - f(x_n) \le - \sigma \| \nabla f(x_n) \| \| x_{n+1} - x_n \| \quad \text{(for some $\sigma > 0$)},\label{eq: Armijo linear space} \\
f(x_{n+1}) = f(x_n) \quad \Rightarrow \quad x_{n+1} = x_n.\notag
\end{gather}
Assume also that the sequence possesses a cluster point $x^*$ that satisfies the \L{}ojasiewicz gradient inequality; i.e., there exist $\theta > 0$ and $\Lambda > 0$ such that
\begin{equation}\label{eq: Lojasiewicz linear space}
| f(y) - f(x^*) |^{1-\theta} \le \Lambda \| \nabla f(y) \|
\end{equation}
for all $y$ in some neighborhood of $x^*$. Then $x^*$ is the limit of the sequence $(x_n)$.
}

It is possible to obtain a stronger result if a small step-size safeguard of the form
$\|x_{n+1} - x_n \| \ge \kappa \|\nabla f(x_n)\|$ (for some $\kappa > 0$)
can be assumed. Not only can one then conclude that the limit $x^*$ is a critical point of $f$, but the asymptotic convergence rate in terms of the \L{}ojasiewicz parameters $\theta$ and $\Lambda$ also can be estimated along lines developed, e.g., in~\cite{AttouchBolte2009,BolteDaniilidisLewis2007,Levitt2012,MerletNguyen2013}. No second-order information is required, but a linear convergence rate can only be established when $\theta = 1/2$, which in general cannot be checked in advance. The most notable class of functions satisfying the  \L{}ojasiewicz gradient inequality in every point are real-analytic functions. Therefore, this type of results can be applied to classical line-search algorithms in $\R^N$ when minimizing a real-analytic function using an angle condition for the search directions and Wolfe conditions for the step-size selection~\cite{AbsilMahonyAndrews2005}. 

The theory can be generalized to gradient flows on real-analytic manifolds. Lageman~\cite{Lageman2007a} considered descent iterations on Riemannian manifolds via families of local parametrizations, with retracted line-search methods like~\eqref{eq: gradient flow} being a special case of his setting. Convergence results were obtained by making regularity assumptions on the used family of parametrizations. Merlet and Nguyen~\cite{MerletNguyen2013} considered a discrete projected $\theta$-scheme for integrating an ODE on a smooth embedded submanifold. They proved the existence of step-sizes ensuring convergence to a critical point via \L{}ojasiewicz gradient inequality by assuming a \emph{uniform} bound on the second-order terms in the Taylor expansion of the metric projection, i.e., a curvature bound for the manifold. The main problem with the noncompact submanifold $\mM_k$, without which our work would be unnecessary, is that such an assumption is unjustified. The second-order term in the metric projection 
on $\mM_
k$ scales like the inverse of the 
smallest singular value~\cite{KochLubich2007,AbsilMalick2012}, which gets arbitrarily large in the case when the iterates approach the boundary of $\mM_k$. However, such a uniform bound for the projection is not needed if one is willing to sacrifice some more information on the constants in convergence rate estimates: if a gradient projection method $(x_n)$ on a smooth manifold \emph{is known} to converge to \emph{some} point of it, one will have \emph{some} curvature bound in the vicinity of the limit.

Therefore, our plan is this: via a version of the \L{}ojasiewicz inequality for projections of the antigradients on tangent cones we prove that the iterates of a line-search method on $\mM_{\le k}$ with a particular choice of step-sizes \emph{do} converge. This would not be possible, or would at least be much more involved, for a line-search method formally designed on $\mM_k$ for the reasons mentioned below. Once the existence of a limit is established we may assume it to lie in $\mM_k$, in order to deduce that it is a critical point and to estimate the convergence rate. Due to the following insight (repeated as Corollary~\ref{cor: deeper insight}), such a full-rank assumption on the limit can be regarded as very natural, or even necessary in some cases, when aiming at critical-point convergence.

\textsc{Theorem.} \textit{
Let $k \le \min(m,n)$, and let $X^* \in \mM_{\le k}$ be a critical point of~\eqref{eq: min problem on bounded rank} (see section~\ref{sec: optimality conditions}). Then either $\rank(X^*) = k$ or $\nabla f(X^*) = 0$.}

Accordingly, it will be typically impossible to prove convergence to a rank-deficient critical point by a method which (in regular points) only ``sees'' projections of the gradient on tangent spaces. \changed{We therefore emphasize again that in our paper line-search methods on $\mM_{\le k}$ are \emph{not} considered for the purpose of detecting or correcting an overestimated target rank, and are also not capable of doing so.} Instead, the idea behind this work can be summarized as follows: a line-search method on $\mM_{\le k}$ \emph{can} deal with singular iterates in theory, although in the most likely cases it will not generate a single one in a real computation. Thus, in the end it will not differ from a line-search method on $\mM_k$ as used in practice, thereby establishing its convergence.

\changed{The problem of correct rank estimation remains but has been recently successfully addressed using rank-increasing algorithms~\cite{Mishraetal2013,Tanetal2014}, in which the target rank is successively increased during the process. It turns out that our concept of gradient projection on the tangent cones of $\mM_{\le k}$ is also useful in justifying and understanding such rank-increasing strategies from a theoretical perspective; see~\cite{UschmajewVandereycken2014} for further explanation.}

\subsubsection*{Contributions and outline}

The paper has two parts: in section~\ref{sec: abstract convergence theory} abstract convergence statements for line-search methods on closed sets are established. In section~\ref{sec: application to low-rank matrices} these are applied to line-search methods on $\mM_{\le k}$ with real-analytic cost function. The following list highlights the results.

\begin{itemize}
\item
Theorem~\ref{th: main theorem} states an abstract convergence result for descent methods on closed sets $\mM \subseteq \R^N$ under the assumption of a \L{}ojasiewicz inequality for the projections of the antigradient to the tangent cones. As it follows more or less known lines, the proof is provided in the appendix. 
\item
In section~\ref{eq: relation to line-search} we define line-search schemes using gradient-related search directions on tangent cones (Algorithm~\ref{A: abstract Alg}). The step-sizes are selected by backtracking to satisfy an Armijo-like rule. Our notion of a retraction (Definition~\ref{def: retraction}) is tailored to tangential projections on algebraic varieties: in every fixed tangent direction it needs to be a first-order approximation of the identity, but in contrast to a smooth retraction on a smooth manifold, it is not required to have a uniform bound on the second-order terms. The main result is Corollary~\ref{cor: smooth line-search result}: if $f$ is real-analytic, then any cluster point of the sequence generated by the method, in whose neighborhood $\mM$ forms a real-analytic submanifold, must be its limit, and a critical point of the problem.
\item
Section~\ref{sec: tangent cone of low-rank matrices} is devoted to the tangent cones of $\mM_{\le k}$. We give a much shorter derivation of their structure (Theorem~\ref{prop: form of tangent cone}) compared to~\cite{CasonAbsilVanDooren2013}. The projection on the tangent cone is a simple and feasible operation (Algorithm~\ref{Alg: Calculate gradient}). When $\rank(X) < k$, the norm of this projection can be estimated from below by the norm of the antigradient itself (Corollary~\ref{cor: projection of negative gradient on tangent cone}). This implies the above a priori statement on the rank of critical points (Corollary~\ref{cor: deeper insight}).
\item
Finally, in sections~\ref{sec: lojasiewicz for matrices} and~\ref{sec: a method without retraction} we consider two concrete line-search methods on $\mM_{\le k}$: the classical steepest descent method with projection (Algorithm~\ref{Alg: SD for matrix manifold}) and a retraction-free method using search directions which do not leave $\mM_{\le k}$ (Algorithm~\ref{Alg: without retraction}). If $f$ is real-analytic, pointwise convergence for both methods is guaranteed, but only when the limit has full rank $k$ can one conclude that it is a critical point; see Theorems~\ref{th: result for low-rank steepest descent} and~\ref{th: result for method without retraction}. We compare both algorithms for a toy example of matrix completion.
\end{itemize}

Currently, results are restricted to finite-dimensional spaces, and one has to expect that hidden constants and provable convergence rates deteriorate with the problem size. This is to be expected from a black-box tool like the \L{}ojasiewicz inequality which cannot be easily extended to infinite dimension; cf.~\cite{HarauxJendoubi2011,Huang2006}. The limitation to finite dimension has been disregarded in related works as well~\cite{CancesEhrlacherLelievre2014,Levitt2012}. On the other hand, Vandereycken~\cite{Vandereycken2013}, for instance, observed more or less dimension-independent convergence rates for matrix completion of synthetic data using nonlinear CG.

\section{Convergence of gradient methods via \L{}ojasiewicz inequality}\label{sec: abstract convergence theory}

Let $\domain \subseteq \R^N$ be open, and $f \vcentcolon \domain \to \mathbb{R}$. Throughout the paper--, unless something else is stated, we assume at least that
\begin{equation}\label{A0}
\text{$f$ is continuously differentiable and bounded below.}\tag{\textbf{A0}} 
\end{equation}
Together with $f$ we consider the minimization problem
\begin{equation}\label{eq : min problem}
\min_{x \in \mM} f(x)
\end{equation}
on a \emph{closed} subset $\mM \subset \domain$ and assume it to have a solution.  By $\| \cdot \|$ we denote the usual Euclidean norm on $\R^N$. 

\subsection{Optimality condition}\label{sec: optimality conditions}

We recall the necessary first-order optimality conditions for problem~\eqref{eq : min problem} and introduce some further notation.

Let $x \in \mM$. The \emph{tangent cone} (also called \emph{contingent cone}) at $x$ is 
\begin{equation}\label{eq: definition of tangent cone}
T_x \mM = \{ \xi \in \R^N \vcentcolon \exists (x_n) \subseteq \mM, \ (a_n) \subseteq \R^+ \text{ s.t. } x_n \to x, \  a_n(x_n - x) \to \xi  \};
\end{equation}
see, e.g.,~\cite{Guignard1969,RockafellarWets1998}. It is a closed cone. Since it is in general not convex, a metric projection onto $T_x \mM$ may not be uniquely defined. However, if we let $y \in \R^N$, then any $z \in T_{x} \mM$ with $\| y - z \| = \dist_{\| \cdot \|}(y, T_x \mM)$ is an orthogonal projection in the sense that
\begin{equation}\label{eq: Pythagoras on cone}
\| z \|^2 = \|y\|^2 - \| y - z \|^2 = \|y\|^2 - \dist_{\| \cdot \|}(y,T_x \mM)^2.
\end{equation}
Specifically, the norm of any such projection of the antigradient $- \nabla f(x)$ onto $T_x \mM$ will be denoted by
\[
g^-(x) = \sqrt{\| \nabla f(x) \|^2 - \dist_{\| \cdot \|}(-\nabla f(x),T_x \mM)^2}.
\]
An equivalent characterization, which resembles the norm of a restricted linear operator, is
\begin{equation}\label{eq: max characterization of g-}
g^-(x) = \max_{\substack{\xi \in T_x \mM \\ \| \xi \| \le 1 }} - \nabla f(x)^\trans \xi,
\end{equation}
and the maximum is achieved if and only if $\xi$ is a best approximation of $-\nabla f(x)$ in $T_x\mM$, which then must have norm $\|\xi\| = g^{-}(x)$.

The \emph{polar tangent cone}
\[
T^\circ_x \mM = \{ y \in \R^N \vcentcolon y^\trans \xi \le 0 \text{ for all $\xi \in T_x \mM$} \} 
\]
is always a closed convex cone. It equals the cone $\hat{N}_x \mM$ of regular normal vectors at $x$~\cite[Definition 6.3 and Proposition 6.5]{RockafellarWets1998}. The necessary first-order optimality condition for $x^*$ to be a relative local minimum of $f$ on $\mM$ is (see~\cite[Theorem~1]{Guignard1969} or~\cite[Theorem~6.12]{RockafellarWets1998})
\begin{equation}\label{eq: optimality condition}
-\nabla f(x^*) \in T^\circ_{x^*} \mM = \hat{N}_x \mM.
\end{equation}
Points with this property are called \emph{critical}. By~\eqref{eq: max characterization of g-}, $x^*$ is critical if and only if 
\[
g^-(x^*) = 0.
\]
This is the optimality condition we shall use in this paper.

In the case that $T_{x} \mM$ is a linear space, $T^\circ_{x} \mM$ is its orthogonal complement, $g^-(x)$ is the norm of the orthogonal projection of $\nabla f(x)$, and everything that has been said becomes quite evident. Moreover, if $\mM$ is a differentiable manifold and $\nabla f$ continuous, then $g^-$ is continuous on $\mM$. In general, it is not.

\changed{
\subsection{\L{}ojasiewicz inequality}

Our convergence results for line-search methods fundamentally rely on assuming the following property at a cluster point.

\begin{definition}
We say that $x \in \mM$ satisfies a \emph{\L{}ojasiewicz inequality for the projected antigradient} if there exist $\delta >0 $, $\Lambda >0$, and 
 $\theta \in (0, 1/2 ]$ such that for all
$ y \in \mM $ with 
$ \| y - x \| <  \delta $ it holds that
\begin{equation}\label{L}
|   f(y) - f(x) |^{1- \theta}  \leq \Lambda  g^-(y).\tag{\textbf{L}}
\end{equation}
\end{definition}

As shown in the original work by \L{}ojasiewicz~\cite[p.~92]{Lojasiewicz1965},\footnote{In this original reference the statement is that there exists $\theta \in (0,1)$ satisfying the inequality, but the proof shows that $\theta \in (0,1/2]$.} the classical, unconstrained \L{}ojasiewicz gradient inequality
\begin{equation}\label{eq:unconstrained Lojasiewicz}
|f(y) - f(x) |^{1- \theta}  \leq \Lambda  \| \nabla f(y) \|
\end{equation}
holds for the important class of real-analytic functions $f$. For this class, we can prove~\eqref{L} in the case that $\mM$ is locally the image of a real-analytic map (parametrization), for instance, as it is the case for the set $\mM_{\le k}$ (see Theorem~\ref{th: Lojasiewicz for matrices}). Basically, we need only apply the chain rule.

\begin{proposition}\label{prop: Lojasiewicz for real-analytic manifolds}
Let $f$ be real-analytic, $\mM \subseteq \domain$, and $x \in \mM$. Assume there exists $M > 0$, an open set $\mN \subseteq \R^M$, $t_0 \in \mN$, and a (componentwise) real-analytic map $\tau \vcentcolon \mN \to \R^N$ such that
\begin{itemize}
\item[\upshape (i)]
$\tau(\mN) \subseteq \mM$, $x = \tau(t_0)$, and 
\item[\upshape (ii)]
the image of every open neighborhood of $t_0$ (within $\mN$) under $\tau$ contains a relatively open neighborhood of $x$ within $\mM$ (in the induced topology of $\R^N$). 
\end{itemize}
Then~\eqref{L} holds at $x$. 
\end{proposition}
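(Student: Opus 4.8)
The plan is to deduce \eqref{L} at $x$ from the classical, unconstrained \L{}ojasiewicz gradient inequality \eqref{eq:unconstrained Lojasiewicz} applied to the pulled-back cost function $h := f \circ \tau$. Since $f$ is real-analytic on the open set $\domain$, the map $\tau$ is componentwise real-analytic on $\mN$, and $\tau(\mN) \subseteq \mM \subseteq \domain$, the composition $h \colon \mN \to \R$ is real-analytic; hence by \eqref{eq:unconstrained Lojasiewicz} there exist $\widetilde\delta, \widetilde\Lambda > 0$ and $\theta \in (0,1/2]$ such that
\[
|h(t) - h(t_0)|^{1-\theta} \le \widetilde\Lambda\, \|\nabla h(t)\| \qquad \text{whenever } \|t - t_0\| < \widetilde\delta .
\]
By the chain rule $\nabla h(t) = D\tau(t)^\trans \nabla f(\tau(t))$, where $D\tau(t) \in \R^{N \times M}$ denotes the Jacobian of $\tau$ at $t$. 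We will reuse exactly this $\theta$ in \eqref{L}.

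Next I would estimate $\|\nabla h(t)\|$ from above by $g^-(\tau(t))$. The key observation is that every parametrized direction lies in the tangent cone: for $v \in \R^M$ and $t \in \mN$ the curve $s \mapsto \tau(t + sv)$ stays in $\mM$ for all sufficiently small $s > 0$, tends to $\tau(t)$ as $s \to 0^+$, and has derivative $D\tau(t)v$ there, so \eqref{eq: definition of tangent cone} (with $a_n = 1/s_n$) gives $D\tau(t)v \in T_{\tau(t)}\mM$; since $v$ is arbitrary, $\ran D\tau(t) \subseteq T_{\tau(t)}\mM$. Combining this with $\{D\tau(t)v : \|v\| \le 1\} \subseteq \{u \in \ran D\tau(t) : \|u\| \le \|D\tau(t)\|_2\}$ (where $\|\cdot\|_2$ is the spectral norm) and the characterization \eqref{eq: max characterization of g-} of $g^-$ yields
\[
\|\nabla h(t)\| = \max_{\|v\| \le 1} \bigl(-\nabla f(\tau(t))\bigr)^\trans \bigl(D\tau(t)v\bigr) \le \|D\tau(t)\|_2\, g^-(\tau(t)) .
\]
Because $\tau$ is $C^1$ and $\mN$ is open, $C := \sup\{\|D\tau(t)\|_2 : \|t - t_0\| \le \rho\} < \infty$ for some $\rho > 0$ with $\overline{B}(t_0,\rho) \subseteq \mN$, and hence $|h(t) - h(t_0)|^{1-\theta} \le \widetilde\Lambda C\, g^-(\tau(t))$ for all $t$ with $\|t - t_0\| < \delta' := \min(\widetilde\delta, \rho)$.

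Finally I would bring in hypothesis (ii). Applied to the open neighborhood $B(t_0, \delta')$ of $t_0$, it furnishes a $\delta > 0$ such that the relatively open set $\{ y \in \mM : \|y - x\| < \delta \}$ is contained in $\tau(B(t_0,\delta'))$. Given any such $y$, pick $t \in B(t_0,\delta')$ with $\tau(t) = y$; then $h(t) = f(y)$ and $h(t_0) = f(\tau(t_0)) = f(x)$, so the estimate above reads $|f(y) - f(x)|^{1-\theta} \le \Lambda\, g^-(y)$ with $\Lambda := \widetilde\Lambda C$, which is precisely \eqref{L}. The step carrying the real content — and the one I expect to require the most care — is the middle one: recognizing $\ran D\tau(t)$ as an admissible family of test directions for $g^-$ and passing from $\|\nabla h(t)\|$ to $g^-(\tau(t))$, which is where it is essential that $\tau$ maps \emph{into} $\mM$ (so its derivatives land in the tangent cone) rather than merely onto a neighborhood of $x$, while hypothesis (ii) is exactly what guarantees that points of $\mM$ near $x$ are attained by $\tau$ at parameters near $t_0$, where the pulled-back inequality is valid. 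The remaining ingredients — real-analyticity of $h$ and local boundedness of $D\tau$ — are routine.
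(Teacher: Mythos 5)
Your proposal is correct and follows essentially the same route as the paper: pull back $f$ along $\tau$, invoke the classical \L{}ojasiewicz inequality for the real-analytic composition, bound $\|\nabla(f\circ\tau)(t)\|$ by $\|D\tau(t)\|\, g^-(\tau(t))$ via the characterization~\eqref{eq: max characterization of g-}, bound $\|D\tau\|$ locally, and use hypothesis (ii) to transfer the estimate to a relative neighborhood of $x$ in $\mM$. Your explicit curve argument showing $\ran D\tau(t)\subseteq T_{\tau(t)}\mM$ simply fills in the detail the paper leaves as ``easy to show'' (and correctly uses only the inclusion, which is all that is needed).
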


{\em Remark}. A special case arises when $\mM$ is, at least in a neighborhood of $x$, an \emph{$M$-dimensional real-analytic submanifold of $\R^N$}. In the terminology used in~\cite[Definition 2.7.1]{KrantzParks2002} this means that there exist such a map $\tau$ as in (ii) mapping \emph{any} open subset of $\mN$ \emph{onto} a relatively open subset of $\mM$ and having a derivative of rank $M$ in any point.

\begin{proof}
The composition $f \circ \tau$ is real-analytic on $\mN$~\cite[Proposition~2.2.8]{KrantzParks2002} and therefore satisfies  the classical \L{}ojasiewicz gradient inequality~\eqref{eq:unconstrained Lojasiewicz} in some open neighborhood $\widetilde{\mN} \subseteq \mN$ of $t_0$, that is,
\begin{equation}\label{eq:classical Loajsiewicz for composition}
|f(\tau(t)) - f(\tau(t_0))|^{1-\theta} \le \widetilde{\Lambda} \| \nabla(f \circ \tau)(t) \| \le \widetilde{\Lambda} \| \nabla \tau(t) \cdot \nabla f(\tau(t)) \|
\end{equation}
for all $t \in \widetilde{\mN}$. As $\tau$ maps on $\mM$, it is easy to show that the derivative $\tau'(t) = [\nabla \tau(t)]^\trans$ maps onto the tangent cone $T_{\tau(t)} \mM$ at $\tau(t)$. Using~\eqref{eq: max characterization of g-}, we deduce
\begin{equation}\label{eq:estimate for chain derivative}
\begin{aligned}
\| \nabla \tau(t) \cdot \nabla f(\tau(t)) \| &= \max_{\| h \|=1} - \nabla f(\tau(t))^\trans \tau'(t)h \\
&\le \max_{\| h \|=1} g^-(\tau(t)) \| \tau'(t)h \| = g^-(\tau(t)) \| \tau'(t) \|.
\end{aligned}
\end{equation}
Without loss of generality, we can assume that $\| \tau'(t) \| \le C$ for all $t \in \widetilde{\mN}$. Combining~\eqref{eq:classical Loajsiewicz for composition} and~\eqref{eq:estimate for chain derivative} then proves~\eqref{L} for $\Lambda = \widetilde{\Lambda} C$, since by (ii) there exists a $\delta > 0$ such that every $y \in \mM$ with $\| y- x \| < \delta$ can be written as $y = \tau(t)$ with $t \in \widetilde{\mN}$.
\qquad\end{proof}

The actual values of $\theta$ and $\Lambda$ may depend on $x$ and are typically not known. We will see below that the strongest convergence statements are achieved when $\theta = \frac{1}{2}$. The two generic cases in which this happens in the unconstrained version~\eqref{eq:unconstrained Lojasiewicz} (for $\delta$ small enough) are $\nabla f(x) \neq 0$ or $\nabla f(x) = 0$ with positive definite Hessian. Of these two cases, only the second is of interest, and we could make a similar statement in the setting of Proposition~\ref{prop: Lojasiewicz for real-analytic manifolds} by assuming the Hessian of $f \circ \tau$ to be positive definite at $t_0$. However, this does not seem to be very useful. First, it is not clear how such an assumption could be related to more concrete conditions on $f$ and $\mM$. Second, in the context of line-search methods we shall consider below, the case of a 
positive definite Hessian at a cluster point could likely be treated by a more constructive local convergence analysis. In summary, the value of $\theta$ will remain unknown in our subsequent results.
}

\subsection{General convergence theorem}\label{sec: global convergence theorem}

Here we state a meta convergence theorem. Consider some iteration $(x_n) \subseteq \mM$ that is intended to solve~\eqref{eq : min problem}. Throughout the paper we will use the shorthand
\[
f_n = f(x_n), \quad \nabla f_n = \nabla f (x_n), \quad g^-_n = g^-( x_n), \quad \text{and} \quad T_n \mM = T_{x_n} \mM.
\]
Using this notation, we make the following assumptions.

\begin{itemize}[leftmargin=*]
\item
\textbf{Primary descent condition:} There exists $\sigma >0 $ such that for large enough $n$ it holds that
\begin{equation}\label{A1}
f_{n+1} - f_n \leq -\sigma  g^-_n \| x_{n+1} - x_n \|.\tag{\textbf{A1}} 
\end{equation}
\item
\textbf{Stationary condition:} For large enough $n$ it holds that
\begin{equation}\label{A2} 
g^-_n = 0 \quad \Rightarrow \quad x_{n+1} = x_n.\tag{\textbf{A2}}
\end{equation}
\item
\textbf{Asymptotic small step-size safeguard:} There exists $\kappa >0 $ such that for large enough $n$ it holds that
\begin{equation}\label{A3}
\| x_{n+1}  -  x_n \|   \geq  \kappa g^-_n.\tag{\textbf{A3}}
\end{equation}
\end{itemize}

In combination with a \L{}ojasiewicz inequality~\eqref{L}, these assumptions imply a fairly strong convergence result.
\begin{theorem}\label{th: main theorem}
Under assumptions {\upshape\eqref{A1}--\eqref{A2}}, if there exists a cluster point $x^*$ of the sequence $(x_n)$ satisfying~\eqref{L}, it is actually its limit point. Further if~\eqref{A3} holds, then the convergence rate can be estimated by
\[
\| x_n - x^* \| \lesssim \begin{cases} e^{- cn } \quad &\text{if $\theta = \frac{1}{2}$ (for some $c>0$)}, \\ n^{- \frac{\theta }{1 -2 \theta }} \quad &\text{if $0<\theta<\frac{1}{2}$.}  \end{cases}
\]
Moreover, $g^-_n \to 0$.
\end{theorem}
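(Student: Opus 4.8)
The plan is to run the classical \L{}ojasiewicz argument, but in the constrained setting, with $g^-_n$ playing the role of the gradient norm. Write $\varphi(s) = \frac{1}{\theta}|s|^\theta$, so that $\varphi$ is concave and increasing on $(0,\infty)$ with $\varphi'(s) = |s|^{\theta - 1}$, and assume WLOG (replacing $f$ by $f - f(x^*)$) that $f(x^*) = 0$. The first step is to record the finite-length property of the iterates near $x^*$: combining the primary descent condition~\eqref{A1}, the monotonicity of $(f_n)$, and the \L{}ojasiewicz inequality~\eqref{L}, one derives for iterates lying in the $\delta$-ball around $x^*$ that
\[
\varphi(f_n) - \varphi(f_{n+1}) \ge \theta \, f_n^{\theta - 1}(f_n - f_{n+1}) \ge \frac{\theta}{\Lambda}\,\frac{g^-_n}{f_n^{1-\theta}} \cdot \sigma g^-_n \|x_{n+1} - x_n\| \cdot \frac{1}{g^-_n} \ge \frac{\theta \sigma}{\Lambda}\|x_{n+1} - x_n\|,
\]
using concavity of $\varphi$ for the first inequality and~\eqref{L} for the second; the case $g^-_n = 0$ is handled separately via the stationary condition~\eqref{A2}, which forces $x_{n+1} = x_n$ and makes that step contribute nothing. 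Summing a telescoping series then gives $\sum_n \|x_{n+1} - x_n\| < \infty$ over any stretch of indices for which the iterates remain in the ball.

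The second step is the standard ``trapping'' argument to show that, once an iterate is close enough to $x^*$ (which happens infinitely often, $x^*$ being a cluster point, and where also $f_n$ is small), all subsequent iterates stay within the $\delta$-ball. One chooses a starting index $n_0$ large enough that assumptions~\eqref{A1}--\eqref{A2} are in force, $\|x_{n_0} - x^*\| < \delta/2$, and the tail sum bound $\frac{\Lambda}{\theta\sigma}\varphi(f_{n_0})$ is itself less than $\delta/2$; then an induction using the telescoped length estimate shows $\|x_n - x^*\| \le \|x_{n_0} - x^*\| + \sum_{j=n_0}^{n-1}\|x_{j+1} - x_j\| < \delta$ for all $n \ge n_0$. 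Hence the length bound applies to the entire tail, so $(x_n)$ is Cauchy and converges; since $x^*$ is a cluster point, the limit is $x^*$. That $g^-_n \to 0$ follows because $f_n \downarrow f(x^*)$ and, inside the ball, \eqref{L} gives $g^-_n \ge \Lambda^{-1} f_n^{1-\theta} \to$... wait, that is the wrong direction; instead $g^-_n \to 0$ follows from~\eqref{A1} together with $\sum\|x_{n+1}-x_n\|<\infty$ only if one also knows a lower bound on step-sizes — which is exactly what~\eqref{A3} supplies, or alternatively one uses that $f_n - f_{n+1} \to 0$ and, should $g^-_n$ not tend to zero along a subsequence, the descent condition would force $\|x_{n+1} - x_n\|\to 0$ on that subsequence, and then~\eqref{L} applied at $x_n$ bounds $|f_n|^{1-\theta} \le \Lambda g^-_n$, which combined with $f_n \to 0$ is not yet a contradiction; cleanest is: by~\eqref{L}, $g^-_n \ge \Lambda^{-1}|f_n - f(x^*)|^{1-\theta}$ gives a lower bound, and an upper bound $g^-_n \to 0$ comes from the convergence rate step below, or simply from the fact that along any subsequence $\|x_{n+1}-x_n\|\to 0$ forces (via \eqref{A3}, which we may invoke for this part) $g^-_n \to 0$; I would present the $g^-_n\to 0$ conclusion as a consequence of the rate analysis for transparency.

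For the convergence rate, assume additionally~\eqref{A3}. Set $\Delta_n = \sum_{j \ge n}\|x_{j+1} - x_j\|$, so $\|x_n - x^*\| \le \Delta_n$, and from the telescoped bound $\Delta_n \le \frac{\Lambda}{\theta\sigma}\varphi(f_n) = \frac{\Lambda}{\sigma\theta^2} f_n^{\theta}$. Now combine \eqref{A1}, \eqref{A3}, and \eqref{L}: $f_n - f_{n+1} \ge \sigma g^-_n \|x_{n+1} - x_n\| \ge \sigma\kappa (g^-_n)^2 \ge \frac{\sigma\kappa}{\Lambda^2} f_n^{2(1-\theta)}$. This is the familiar recursion $f_n - f_{n+1} \ge c\, f_n^{2-2\theta}$ for a monotone nonincreasing sequence $f_n \downarrow 0$. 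When $\theta = 1/2$ the exponent is $1$, giving geometric decay $f_n \le (1 - c)^{n - n_0} f_{n_0}$, hence $\Delta_n \lesssim f_n^{1/2} \lesssim e^{-cn/2}$ after renaming the constant; when $0 < \theta < 1/2$ the exponent $2 - 2\theta > 1$, and the standard comparison lemma for such recursions (see the cited references, e.g. via comparing with the ODE $\dot y = -c y^{2-2\theta}$) yields $f_n \lesssim n^{-1/(1-2\theta)}$, whence $\|x_n - x^*\| \le \Delta_n \lesssim f_n^{\theta} \lesssim n^{-\theta/(1-2\theta)}$. Finally $g^-_n \le \Lambda^{-1}(f_{n-1} - f_n)^{1/2}\kappa^{-1/2}$-type bounds, or directly $g^-_n \lesssim f_n^{1-\theta} \to 0$, give $g^-_n \to 0$.

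The main obstacle, and the reason this is not a verbatim copy of the Euclidean proof of \cite{AbsilMahonyAndrews2005}, is the lack of continuity of $g^-$ on $\mM$: one cannot pass to the limit in $g^-_n$ to conclude directly that $x^*$ is critical, and one must be careful that the \L{}ojasiewicz inequality \eqref{L} is assumed and used only at points $y \in \mM$ within the $\delta$-ball, which is precisely why the trapping argument — establishing a priori that the entire tail of the sequence stays in that ball — has to be carried out before the length and rate estimates can be invoked globally. Everything else is bookkeeping with the concave function $\varphi$ and the recursion lemma, which is why the authors defer the full proof to the appendix.
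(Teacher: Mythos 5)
Your proposal is correct and follows essentially the same route as the paper's appendix proof: concavity of $s\mapsto s^\theta$ combined with \eqref{A1} and \eqref{L} gives the telescoping length bound $\|x_m-x_n\|\lesssim f_n^\theta$, the trapping induction keeps the tail of the sequence inside the $\delta$-ball so that $x^*$ is the limit, and \eqref{A3} then yields both $g^-_n\to 0$ and the rate. The only (cosmetic) difference is that you obtain the rate from the recursion $f_{n+1}\le f_n - c\,f_n^{2-2\theta}$ for the function values and convert via $\|x_n-x^*\|\le \Delta_n\lesssim f_n^\theta$, whereas the paper writes the equivalent recursion $r_{n+1}\le r_n-\nu r_n^{(1-\theta)/\theta}$ directly for the tail lengths $r_n=\sum_{k\ge n}\|x_{k+1}-x_k\|$; both give the same exponents.
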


This theorem is an adaption of similar results scattered throughout the literature. Up to replacing the usual gradient by the projected antigradient, assumptions~\eqref{A1},~\eqref{A2} and~\eqref{L} are the same as in~\cite[Theorem~3.2]{AbsilMahonyAndrews2005} and are sufficient to prove the convergence of the sequence $(x_n)$ if it is bounded. \eqref{A2} is a natural technical requirement to the algorithm for not moving in the critical-point set and is typically satisfied if the iteration is gradient-related. Adding assumption~\eqref{A3} does not only guarantee that the $g_n^-$ tend to zero, but it allows us to estimate the convergence rate along known lines, e.g.,~\cite{AttouchBolte2009,Levitt2012}. However, as~\eqref{A3} is required here only for $n$ larger than some unknown $n_0$, one cannot determine the constants behind $\lesssim$ explicitly (a constant depending on $n_0$ may be deduced). 

Corresponding results for smooth manifolds have been obtained in~\cite{Lageman2007a,Lageman2007,MerletNguyen2013}. In this context, we should remark that the ambient norm $\| x_{n+1} - x_n \|$, as used in~\eqref{A1} and~\eqref{A3}, is not necessarily a natural measure of distance on $\mM$, but is particularly appropriate when the restriction to $\mM$ is motivated to reduce the complexity of a minimization problem in $\R^N$, as is typically the case for low-rank optimization.\

Although no changes in the known arguments besides replacing $\|\nabla f\|$ by $g^-$ are required, we give a proof of Theorem~\ref{th: main theorem} in the appendix to keep the paper self-contained.

\changed{
We emphasize that the theorem only states $g_n^- \to 0$.} The question of when this actually implies $g^-(x^*) = 0$ is delicate, and simple counterexamples can be constructed. A sufficient condition would be $T_{x^*} \mM \subset \liminf_{n \to \infty} T_n \mM$ in the sense of set convergence (see, e.g.,~\cite{RockafellarWets1998}). Unfortunately, this will usually not hold in the singular points of $\mM_{\le k}$ when approached by a sequence of full-rank matrices (Theorem~\ref{prop: form of tangent cone}). Later, we will be forced to make some smoothness assumptions on a neighborhood of $x^*$.

\subsection{Retracted line-search methods}\label{eq: relation to line-search}

For line-search methods in $\R^N$ it is well known how to obtain convergence results based on the \L{}ojasiewicz gradient inequality~\cite{AbsilMahonyAndrews2005}. Here we consider projected gradient flows on a set $\mM$. 

\subsubsection{Retractions}

Following~\cite{AbsilMahonySepulchre2008}, a retracted line-search method on a smooth manifold $\mM$ has the general form
\begin{equation}\label{eq: line-search scheme}
x_0 \in \mM, \quad x_{n+1} = R(x_n, \alpha_n \xi_n),
\end{equation}
where $\xi_n$ are tangent vectors at $x_n$, $\alpha_n \ge 0$, and
\(
R \vcentcolon T\mM  \to \mM
\)
is a \emph{smooth retraction}~\cite{Shub1986}. This means that $R$ is a $C^\infty$ map which takes pairs $(x, \xi_x)$ from the tangent bundle $T \mM$ (which represent vectors $x + \xi_x$ on the affine tangent plane at $x$) back to the manifold, and has the property of being a first-order approximation of the exponential map, that is, its derivative at $(x,0)$ with respect to $\xi_x$ is the identity on $T_x \mM$:
\begin{equation}\label{eq: first order approximation}
	\lim_{T_{x} \mM  \ni \xi_x \to 0}\frac{\| R(x, \xi_x) - (x + \xi_x) \|}{\| \xi_x \|} = 0
\end{equation}
for all $x \in \mM$. However, since we do not want to restrict ourselves to smooth manifolds, we make the following, more general definition.
\begin{definition}[retraction]\label{def: retraction}
Let $\mM$ be closed. A map
\[
R \vcentcolon \bigcup_{x \in \mM} \{ x \} \times T_x\mM  \to \mM
\]
(where now $T_x \mM$ is the tangent cone) will be called a \emph{retraction} if for any fixed $x \in \mM$ and $\xi_x \in T_x \mM$ it holds that $\alpha \mapsto R(x, \alpha \xi_x)$ is continuous on $[0,\infty)$, and
\begin{equation}\label{eq: Retraction o of alpha}
\lim_{\alpha \to 0^+} \frac{R(x, \alpha \xi_x) - (x + \alpha \xi_x)}{\alpha} = 0.
\end{equation}
\end{definition}
The existence of such a retraction has implications for the regularity of the set $\mM$. It is equivalent to the (one-sided) differentiability of the map $\alpha \mapsto \dist_{\| \cdot \|}(x + \alpha \xi_x, \mM)$ in zero. This is, for instance, the case for real-algebraic varieties like $\mM_{\le k}$, and follows from the fact that for every tangent vector $\xi_x$ to an algebraic variety, there exists an analytic arc $\gamma \vcentcolon [0,\epsilon) \to \mM$ such that $\xi_x = \dot{\gamma}(0)$~\cite[Proposition~2]{OSheaWilson2004}.

By~\eqref{eq: Retraction o of alpha}, $R(x + \alpha\xi_x)$ is better than a first-order approximation of $x + \alpha\xi_x$ for very small $\alpha > 0$. In particular, for any fixed $\xi_x$ and $\epsilon > 0$,~\eqref{eq: Retraction o of alpha} implies that
\begin{equation}\label{eq: asymptotic approximation of stepsize}
(1 - \epsilon)\alpha \| \xi_x\| \le \| R(x, \alpha\xi_x) - x \| \le (1 + \epsilon)\alpha \|\xi_x\| \quad \text{for sufficiently small $\alpha$.}
\end{equation}
It means that a (small enough) step made in the tangent cone is neither increased nor decreased too much by the retraction, which obviously is of importance in analyzing a line-search method like~\eqref{eq: line-search scheme}. In what follows, we assume that we have a general upper bound for arbitrary steps:
\begin{equation}\label{eq: upper bound for retraction}
\| R(x, \xi_x) - x \| \le M \| \xi_x \| \quad \text{for all $x \in \mM$ and $\xi_x \in T_x \mM$.}
\end{equation}
This imposes no serious restriction.

Since $\mM$ is assumed to be closed, a natural choice for $R$, though practically not always the most convenient, is the best approximation of $x + \xi_x$ in the Euclidean ambient norm (metric projection), that is,
\begin{equation}\label{eq: best approximation}
R(x, \xi_x) \in \argmin_{y \in \mM} \| y - (x + \xi_x) \|.
\end{equation}
By the remarks above, this defines a valid retraction, for example, on closed real-algebraic varieties (cf.~\eqref{eq: valid retraction on variety}) with $M = 2$ in~\eqref{eq: upper bound for retraction}. For the variety $\mM_{\le k}$ of bounded rank matrices one even can take $M = 1 + 2^{-1/2}$ (Proposition~\ref{prop: stable retraction}).

\subsubsection{Angle condition}

To obtain such strong convergence results as we have in mind, one naturally has to guarantee that the search directions $\xi_n$ in~\eqref{eq: line-search scheme} remain sufficiently gradient-related. We call $\xi_n \in T_n \mM$ a \emph{descent direction} if $\nabla f_n^\trans\xi_n < 0$.

\begin{definition}[angle condition]
Given  $x_n \in \mM$ and $\omega \in (0,1]$, $\xi_n \in T_n \mM$ is said to satisfy the \emph{$\omega$-angle condition} if
\begin{equation}\label{eq: angle condition}
\nabla f_n^\trans \xi_n^{} \le - \omega g_n^- \| \xi_n \|.
\end{equation}
\end{definition}

An equivalent statement is that the inner product between $-\nabla f_n / \| \nabla f_n\|$ and $\xi_n / \| \xi_n \|$ is at least $\omega g_n^- / \| \nabla f_n \|$. 

For clarity, we emphasize the following.

\begin{proposition}\label{prop: existence of decrease direction}
Any Euclidean best approximation
\[
\xi_n \in \argmin_{\xi \in T_n \mM} \| - \nabla f_n - \xi \|
\]
of $- \nabla f_n$ on $T_n \mM$ satisfies the $\omega$-angle condition with $\omega = 1$. Moreover, with this choice, $\xi_n = 0$ if and only if $g_n^- = 0$.
\end{proposition}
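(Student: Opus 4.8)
The statement to prove is Proposition~\ref{prop: existence of decrease direction}: that any Euclidean best approximation $\xi_n$ of $-\nabla f_n$ on $T_n\mM$ satisfies the $1$-angle condition $\nabla f_n^\trans\xi_n \le -g_n^-\|\xi_n\|$, and that such a $\xi_n$ vanishes exactly when $g_n^- = 0$. The plan is to read off everything from the geometric characterization of the projection onto a (possibly nonconvex) cone already recorded in the excerpt, namely equations~\eqref{eq: Pythagoras on cone} and~\eqref{eq: max characterization of g-}.

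First I would invoke the Pythagoras-type identity~\eqref{eq: Pythagoras on cone} with $y = -\nabla f_n$ and $z = \xi_n$: since $\xi_n$ is a best approximation, $\|\xi_n\|^2 = \|\nabla f_n\|^2 - \dist(-\nabla f_n, T_n\mM)^2 = g_n^-(x_n)^2$, so $\|\xi_n\| = g_n^-$. This immediately settles the ``moreover'' part: $\xi_n = 0 \iff \|\xi_n\| = 0 \iff g_n^- = 0$ (using also that $g_n^- = 0$ forces the best approximation to be $0$, since $0 \in T_n\mM$ always and then $\dist(-\nabla f_n, T_n\mM) = \|\nabla f_n\|$, making $0$ optimal). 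Next, for the angle condition, I would use~\eqref{eq: max characterization of g-}, which says that $g_n^-$ equals the maximum of $-\nabla f_n^\trans \eta$ over $\eta \in T_n\mM$ with $\|\eta\| \le 1$, and that this maximum is attained precisely at (a scaling of) the best approximation. Hence $-\nabla f_n^\trans(\xi_n/\|\xi_n\|) = g_n^-$ when $\xi_n \ne 0$, i.e. $\nabla f_n^\trans\xi_n = -g_n^-\|\xi_n\|$, which is~\eqref{eq: angle condition} with $\omega = 1$ holding with equality; when $\xi_n = 0$ the inequality is trivial ($0 \le 0$).

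There is essentially no hard part here — the proposition is a bookkeeping consequence of facts already established in Section~\ref{sec: optimality conditions}. The one point deserving a word of care is that $T_n\mM$ need not be convex, so the best approximation $\xi_n$ is not unique and the usual variational inequality characterizing projections onto convex sets is unavailable; this is exactly why one must route the argument through the norm identity~\eqref{eq: Pythagoras on cone} and the max-formula~\eqref{eq: max characterization of g-} rather than through a first-order optimality condition. A second small subtlety is checking that the maximizer in~\eqref{eq: max characterization of g-} is indeed $\xi_n/\|\xi_n\|$ (and not some other unit vector): this was already asserted in the excerpt right after~\eqref{eq: max characterization of g-} (``the maximum is achieved if and only if $\xi$ is a best approximation of $-\nabla f(x)$''), so I would simply cite it. Altogether the proof is three or four lines.
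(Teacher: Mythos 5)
Your proposal is correct and follows essentially the same route as the paper, which likewise reads the result off from the identity $g_n^- = \|\xi_n\| = \sqrt{\|\nabla f_n\|^2 - \|\nabla f_n + \xi_n\|^2}$ established in section~2.1 (the paper expands this directly to get $\nabla f_n^\trans \xi_n = -g_n^-\|\xi_n\|$, whereas you cite the equivalent max-characterization~\eqref{eq: max characterization of g-}; the content is the same).
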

\begin{proof}
As discussed in section~\ref{sec: optimality conditions}, it holds in this case that $g^-_n = \| \xi_n^{} \| = \sqrt{\| \nabla f_n^{} \|^2 - \| \nabla f_n^{} + \xi_n^{} \|^2}$, which implies $\nabla f_n^\trans \xi_n^{} = - g_n^- \| \xi_n^{} \|$.
\qquad\end{proof}

\subsubsection{Armijo point}

Given $x_n \in \mM$ and a descent direction $\xi_n \in T_n \mM$, we will have to pick a step-size $\alpha_n$ small enough to satisfy~\eqref{A1}. It should, however, be as large as possible in order to hopefully guarantee~\eqref{A3}.

\begin{definition}[Armijo point]\label{def: Armijo point}
Let $\xi_n \in T_n \mM$ be a descent direction at $x_n \in \mM$, $\bbeta_n >0$, and $\beta,c \in (0,1)$. The number
\begin{equation}\label{eq:Armijo point}
\alpha_n = \max\{ \beta^m\bbeta_n \vcentcolon m \in \N \cup \{0\}, \ f(R(x_n, \beta^m \bbeta_n \xi_n)) - f_n \le c \beta^m \bbeta_n \nabla f_n^\trans \xi_n^{} \}
\end{equation}
is called the \emph{Armijo point} for $x_n,\xi_n,\bbeta_n,\beta,c$.
\end{definition}

This will be our choice for the step-size $\alpha_n$ in all subsequent algorithms. The importance of the Armijo point lies in the fact that in principle it can be found in finitely many steps using backtracking. To see that the maximum in~\eqref{eq:Armijo point} is not taken over the empty set, we introduce another important point:
\begin{equation}\label{eq: definition alpha prime}
\balpha_n = \min\{ \alpha > 0 \vcentcolon f(R(x_n, \alpha \xi_n)) - f_n = c \alpha \nabla f_n^\trans \xi_n^{} \}.
\end{equation}
Then the following relations hold.

\begin{proposition}\label{prop: propoerties of Armijo point}
Assume~\eqref{A0}. Let $\xi_n \in T_n \mM$ be a descent direction at $x_n \in \mM$, and $\beta,c \in (0,1)$. Then $\balpha_n > 0$ exists, i.e., the minimum in~\eqref{eq: definition alpha prime} is not taken over the empty set, and $f(R(x_n, \alpha \xi_n)) - f_n \le c \alpha \nabla f_n^\trans \xi_n^{}$ for all $\alpha \in [0,\balpha_n]$. The Armijo point $\alpha_n$ defined by~\eqref{eq:Armijo point} satisfies
 \begin{align*}\alpha_n \ge \beta \balpha_n &\quad \text{if $\bbeta_n > \balpha_n$,} \\ \alpha_n = \bbeta_n &\quad \text{if $\bbeta_n \le \balpha_n$.}
\end{align*}
\end{proposition}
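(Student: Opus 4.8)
The plan is to first establish existence of $\balpha_n$ and the sign/monotonicity statement on $[0,\balpha_n]$, and then deduce the two cases for the Armijo point purely from the definition of $\balpha_n$ as a minimum.

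For the existence of $\balpha_n$, I would introduce the auxiliary function
$\varphi(\alpha) = f(R(x_n, \alpha \xi_n)) - f_n - c\alpha \nabla f_n^\trans \xi_n$, which is continuous on $[0,\infty)$ since $\alpha \mapsto R(x_n,\alpha\xi_n)$ is continuous (Definition~\ref{def: retraction}) and $f$ is continuous by \eqref{A0}; note $\varphi(0) = 0$. The key point is to examine the right derivative of $\alpha \mapsto f(R(x_n,\alpha\xi_n))$ at $\alpha = 0$. Using \eqref{eq: Retraction o of alpha}, we have $R(x_n,\alpha\xi_n) = x_n + \alpha\xi_n + o(\alpha)$, so a first-order Taylor expansion of $f$ gives
$f(R(x_n,\alpha\xi_n)) - f_n = \alpha \nabla f_n^\trans \xi_n + o(\alpha)$. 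Hence $\varphi(\alpha) = (1-c)\alpha\,\nabla f_n^\trans\xi_n + o(\alpha)$, and since $\xi_n$ is a descent direction ($\nabla f_n^\trans \xi_n < 0$) and $c \in (0,1)$, we get $\varphi(\alpha) < 0$ for all sufficiently small $\alpha > 0$. On the other hand, $\varphi$ cannot stay negative forever: because $f$ is bounded below by \eqref{A0} while $-c\alpha\nabla f_n^\trans\xi_n \to +\infty$, we have $\varphi(\alpha) \to +\infty$ as $\alpha \to \infty$. So the set $\{\alpha > 0 : \varphi(\alpha) = 0\}$ is nonempty; being the zero set of a continuous function intersected with $(0,\infty)$, and bounded away from $0$ (since $\varphi < 0$ near $0^+$), it is closed and has a positive infimum, which is attained. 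This is $\balpha_n > 0$. Moreover $\varphi < 0$ on $(0,\balpha_n)$ by minimality of $\balpha_n$ and the intermediate value theorem (if $\varphi$ were $\ge 0$ somewhere in $(0,\balpha_n)$ it would have a zero there, contradicting minimality), which together with $\varphi(0)=0$ and $\varphi(\balpha_n)=0$ gives $\varphi(\alpha) \le 0$, i.e. the Armijo inequality, for all $\alpha \in [0,\balpha_n]$.

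For the Armijo point itself: the numbers $\beta^m\bbeta_n$ for $m \in \N \cup \{0\}$ form a decreasing sequence tending to $0$. If $\bbeta_n \le \balpha_n$, then already $m=0$ works since $\varphi(\bbeta_n) \le 0$ by the previous paragraph, so the maximum in \eqref{eq:Armijo point} is $\alpha_n = \bbeta_n$. If $\bbeta_n > \balpha_n$, let $m^*$ be the smallest $m$ with $\beta^m\bbeta_n \le \balpha_n$; such $m$ exists since $\beta^m\bbeta_n \to 0$. Then $\beta^{m^*}\bbeta_n \le \balpha_n$ gives $\varphi(\beta^{m^*}\bbeta_n) \le 0$, so $\beta^{m^*}\bbeta_n$ is admissible in \eqref{eq:Armijo point} and hence $\alpha_n \ge \beta^{m^*}\bbeta_n$. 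By minimality of $m^*$ we have $\beta^{m^*-1}\bbeta_n > \balpha_n$, whence $\beta^{m^*}\bbeta_n > \beta\balpha_n$, giving $\alpha_n \ge \beta^{m^*}\bbeta_n > \beta\balpha_n$, i.e. $\alpha_n \ge \beta\balpha_n$.

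I expect the only genuinely delicate point to be the first-order expansion $f(R(x_n,\alpha\xi_n)) - f_n = \alpha\nabla f_n^\trans\xi_n + o(\alpha)$: this requires combining \eqref{eq: Retraction o of alpha} with differentiability of $f$ at $x_n$, and one must be slightly careful that the $o(\alpha)$ term from the retraction and the $o(\|\cdot\|)$ term from the Taylor expansion of $f$ compose correctly into an $o(\alpha)$ term. Everything else is a routine application of continuity, the intermediate value theorem, and the boundedness-below assumption \eqref{A0}.
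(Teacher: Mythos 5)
Your proof is correct and follows essentially the same route as the paper's: a first-order Taylor expansion combined with the retraction property \eqref{eq: Retraction o of alpha} (handling the $o(\|\hR(\alpha)-x_n\|)=o(\alpha)$ composition via \eqref{eq: asymptotic approximation of stepsize}) to show the Armijo inequality holds strictly for small $\alpha>0$, then boundedness below from \eqref{A0} to obtain the smallest positive intersection point $\balpha_n$. The only difference is that you spell out the final case analysis for $\alpha_n$ (which the paper dismisses as ``immediate''), and your write-up of it is correct.
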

\begin{proof}
For convenience, let $\hR(\alpha) = R(x_n, \alpha \xi_n)$ and $F(\alpha) = f_n + c \alpha \nabla f_n^\trans \xi_n^{}$. We have to show that $f(\hR(\alpha))$ is strictly smaller than $F(\alpha)$ for sufficiently small $\alpha > 0$. By Taylor's theorem and~\eqref{eq: Retraction o of alpha},
\begin{align*}
f(\hR(\alpha)) 
&= f_n + \nabla f_n^\trans (\hR(\alpha) - x_n) + o(\| \hR(\alpha) - x_n\|)\\
&= f_n+ \nabla f_n^\trans (\alpha \xi_n + o(\alpha)) + o(\| \hR(\alpha) - x_n\|)\\
&= f_n+ c\alpha \nabla f_n^\trans \xi_n^{} + (1-c) \alpha f_n^\trans \xi_n^{} + o(\alpha) + o(\| \hR(\alpha) - x_n\|),
\end{align*}
where $o(h)$ denotes a quantity with $o(h)/h = 0$ for $h \to 0^+$. By~\eqref{eq: asymptotic approximation of stepsize}, the ratio $\| \hR(\alpha) - x_n\|/\alpha$ converges to $\|\xi_n\|$ for $\alpha \to 0^+$, which implies $o(\| \hR(\alpha) - x_n\|) = o(\alpha)$. As desired, it now follows that
\[
\frac{1}{\alpha}(f(\hR(\alpha)) - f_n - c\alpha \nabla f_n^\trans \xi_n^{}) = (1-c) \nabla f_n^\trans \xi_n^{} + \frac{o(\alpha)}{\alpha}
\]
is negative for small enough $\alpha$. Since $\alpha \mapsto \hR(\alpha)$ is continuous and bounded below by~\eqref{A0}, while $F(\alpha)$ is not, the smallest positive intersection point $\balpha_n$ must exist. The assertions on $\alpha_n$ are immediate.
\qquad\end{proof}

The role of the parameter $\bbeta_n$ in Definition~\ref{def: Armijo point} is to adjust the initial length of the search direction $\xi_n$ on which no assumptions have been made. When $\bbeta_n \|\xi_n\|$ is too small, one has no chance to establish a minimum step-size safeguard like~\eqref{A3}. The restriction we make is
\begin{equation}\label{eq: condition for gamma_n}
\bbeta_n \ge \min\left( \frac{g_n^-}{\| \xi_n \|}, \balpha_n \right).
\end{equation}
To achieve this, one needs to either calculate $g_n^-$, or increase the value of $\bbeta_n$ until $f(R(x_n, \bbeta \xi_n)) \ge f_n + c \bbeta_n \nabla f_n^\trans \xi_n^{}$ holds.

\subsubsection{Convergence results} 

The algorithm we analyze is formalized as Algorithm~\ref{A: abstract Alg}. By Propositions~\ref{prop: existence of decrease direction} and~\ref{prop: propoerties of Armijo point}, all steps are feasible. We first assert that the mere convergence of the produced sequence $(x_n)$, when assuming the \L{}ojasiewicz inequality, is guaranteed by Theorem~\ref{th: main theorem}. 

\begin{algorithm}
\KwIn{Starting point $x_0 \in \mM$, $\omega \in (0,1]$, $\beta \in (0,1)$, $c \in (0,1)$.}
\For{n=0,1,2,\dots}{
Choose $\xi_n \in T_n \mM$ satisfying~\eqref{eq: angle condition}, but choose $\xi_n = 0$ only when $g_n^- = 0$\;
Choose $\bbeta_n \ge \min(g_n^- / \| \xi_n \| \balpha_n )$; find Armijo point $\alpha_n$ for $x_n, \xi_n, \bbeta_n, \beta, c$\;
Form the next iterate
\[
x_{n+1} = R(x_n, \alpha_n \xi_n).
\]
}
\caption{Gradient-related projection method with line-search}\label{A: abstract Alg}
\end{algorithm}

\begin{corollary}\label{cor: mere convergence of line-search}
Assume~\eqref{A0}. The sequence $(x_n)$ produced by Algorithm~\ref{A: abstract Alg} satisfies~\eqref{A1} with $\sigma = \omega c M^{-1}$ ($M$ being the constant from~\eqref{eq: upper bound for retraction}) and~\eqref{A2}. Consequently, if a cluster point $x^*$ exists and satisfies the \L{}ojasiewicz gradient inequality~\eqref{L}, then $\lim_{n \to \infty} x_n = x^*$.
\end{corollary}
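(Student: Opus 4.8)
The strategy is to verify that the sequence $(x_n)$ produced by Algorithm~\ref{A: abstract Alg} satisfies the two hypotheses \eqref{A1} and \eqref{A2} of Theorem~\ref{th: main theorem}, and then invoke that theorem directly. The stationary condition \eqref{A2} is essentially built into the algorithm: if $g_n^- = 0$ at some step, then by the instruction in Algorithm~\ref{A: abstract Alg} we are forced to take $\xi_n = 0$ (this is the only case in which $\xi_n = 0$ is permitted, but also the only sensible choice, since the angle condition \eqref{eq: angle condition} with $g_n^- = 0$ forces $\nabla f_n^\trans \xi_n \le 0$ while $\xi_n$ being a descent direction is then impossible unless $\xi_n = 0$). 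With $\xi_n = 0$, the retraction property (continuity and \eqref{eq: Retraction o of alpha}) gives $R(x_n, \alpha_n \cdot 0) = x_n$, so $x_{n+1} = x_n$, which is exactly \eqref{A2}.

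For the primary descent condition \eqref{A1}, I would chain together three facts. First, by the definition of the Armijo point \eqref{eq:Armijo point} we have the Armijo decrease
\[
f_{n+1} - f_n = f(R(x_n,\alpha_n\xi_n)) - f_n \le c\,\alpha_n\,\nabla f_n^\trans\xi_n .
\]
Second, the $\omega$-angle condition \eqref{eq: angle condition} gives $\nabla f_n^\trans\xi_n \le -\omega\, g_n^-\,\|\xi_n\|$, so combining,
\[
f_{n+1} - f_n \le -c\,\omega\, g_n^-\,\alpha_n\|\xi_n\|.
\]
Third, I relate $\alpha_n\|\xi_n\|$ to the actual step length $\|x_{n+1}-x_n\|$: the general upper bound \eqref{eq: upper bound for retraction} applied with $\xi_x = \alpha_n\xi_n$ yields $\|x_{n+1}-x_n\| = \|R(x_n,\alpha_n\xi_n) - x_n\| \le M\,\alpha_n\|\xi_n\|$, hence $\alpha_n\|\xi_n\| \ge M^{-1}\|x_{n+1}-x_n\|$. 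Substituting this into the previous display gives
\[
f_{n+1} - f_n \le -c\,\omega\, M^{-1}\, g_n^-\,\|x_{n+1}-x_n\|,
\]
which is precisely \eqref{A1} with $\sigma = \omega c M^{-1}$. (One should note that $g_n^- \ge 0$ always, so the sign is consistent; the inequality is also trivially true in the stationary case $\xi_n=0$.)

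Having established \eqref{A1} and \eqref{A2}, the conclusion follows: if a cluster point $x^*$ exists and satisfies the \L{}ojasiewicz inequality \eqref{L}, then Theorem~\ref{th: main theorem} guarantees $\lim_{n\to\infty} x_n = x^*$. The only subtle points are bookkeeping ones: one must make sure every step of Algorithm~\ref{A: abstract Alg} is well-defined, but this was already noted to follow from Propositions~\ref{prop: existence of decrease direction} and~\ref{prop: propoerties of Armijo point} (existence of a descent direction satisfying the angle condition when $g_n^- > 0$, and existence of a finite Armijo point via backtracking). I do not expect any real obstacle here — the corollary is a direct packaging of the algorithm's defining properties into the hypotheses of the meta-theorem; the genuine work (the \L{}ojasiewicz argument and the rate estimates) lives in Theorem~\ref{th: main theorem} itself, whose proof is deferred to the appendix.
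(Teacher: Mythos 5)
Your argument is correct and coincides with the paper's own (one-line) proof: the paper likewise obtains \eqref{A1} by chaining the Armijo decrease, the $\omega$-angle condition \eqref{eq: angle condition}, and the retraction bound \eqref{eq: upper bound for retraction} to get $\sigma = \omega c M^{-1}$, and notes that \eqref{A2} holds by construction before invoking Theorem~\ref{th: main theorem}. Your expanded bookkeeping (including $R(x_n,0)=x_n$ and the well-definedness of each step via Propositions~\ref{prop: existence of decrease direction} and~\ref{prop: propoerties of Armijo point}) is exactly what the paper leaves implicit.
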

\begin{proof}
Property~\eqref{A1} follows immediately from~\eqref{eq: angle condition} and~\eqref{eq: upper bound for retraction};~\eqref{A2} holds by construction.
\qquad\end{proof}

Obviously, it is not necessary to choose the Armijo step-size to obtain this result; it suffices to have $f(R(x_n, \alpha_n \xi_n)) - f_n \le c \alpha_n \nabla f_n^\trans \xi_n^{}$. The choice of the Armijo point becomes important, however, when one also aims for~\eqref{A3} and the convergence rate estimate in Theorem~\ref{th: main theorem}. To proceed in this direction, we were not able to avoid imposing additional regularity assumptions on the retraction in the limit point.

\begin{theorem}\label{th: abstract theorem for line-search}
In the situation of Corollary~\ref{cor: mere convergence of line-search}, assume further that
\begin{itemize}
 \item[\upshape (i)]
 $\alpha_n \xi_n \to 0$, and
 \item[\upshape (ii)]
 there exists a constant $C > 0$ such that for all sequences $(\hat{\xi}_n)$ with $\hat{\xi}_n \in T_n \mM$ and $\hat{\xi}_n \to 0$ it holds that
\begin{equation}\label{eq: abstract property}
 \limsup_{n \to \infty} \frac{\| R(x_n, \hat{\xi}_n) - (x_n + \hat{\xi}_n)\|}{\| \hat{\xi}_n \|^2} \le C.
\end{equation}
\end{itemize}
Assume further that $f$ is bounded below on the whole of $\R^N$, and that there exists an open (in $\R^N$) neighborhood $\mN$ of $x^*$ such that
\begin{equation}\label{eq: Lipschitz condition}
\| \nabla f(x) - \nabla f(y) \| \le L \| x - y \| \quad \text{for all $x,y \in \mN$.}
\end{equation}
Then~\eqref{A3} holds (with a generally unknown constant $\kappa$). Consequently, $g_n^- \to 0$, and the convergence rate estimates in Theorem~\ref{th: main theorem} apply.
\end{theorem}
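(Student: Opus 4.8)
The plan is to establish the one missing ingredient~\eqref{A3} and then simply invoke Theorem~\ref{th: main theorem}. By Corollary~\ref{cor: mere convergence of line-search} we already have $x_n \to x^*$, and since $g^-_n \le \|\nabla f_n\| \to \|\nabla f(x^*)\|$, there is a bound $g^-_n \le G$ valid for all large $n$. For any index with $g^-_n = 0$ one has $x_{n+1} = x_n$ by~\eqref{A2}, so~\eqref{A3} is vacuous there; for the remaining indices $\xi_n \neq 0$. Because the angle condition~\eqref{eq: angle condition} is invariant under positive rescaling of $\xi_n$, the retraction depends only on the product $\alpha\xi_n$, and the quantities $\balpha_n$, $\bbeta_n$, $\alpha_n$ scale consistently, I may rescale and assume $\|\xi_n\| = 1$ for those indices. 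With this normalization, hypothesis (i) reads $\alpha_n \to 0$, and~\eqref{A3} will follow from a bound $\alpha_n \ge \kappa_0 g^-_n$ for large $n$.

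The first real step is to upgrade the sequential hypothesis~\eqref{eq: abstract property} to a \emph{uniform} second-order retraction estimate: there exist $N \in \N$ and $\eta > 0$ such that $\|R(x_n,\zeta) - (x_n+\zeta)\| \le (C+1)\|\zeta\|^2$ for all $n \ge N$ and all $\zeta \in T_n\mM$ with $\|\zeta\| \le \eta$. I would prove this by contradiction: a violation for arbitrarily large $n$ and arbitrarily small $\|\zeta\|$ would, by interleaving the offending vectors with zeros, produce a single sequence $(\hat\xi_n)$ with $\hat\xi_n \in T_n\mM$, $\hat\xi_n \to 0$, along which the ratio in~\eqref{eq: abstract property} stays above $C+1$, contradicting (ii). Shrinking $\eta$ so that also $(C+1)\eta \le 1$ gives $\|R(x_n,\zeta) - x_n\| \le 2\|\zeta\|$ on the same range; and since $x_n \to x^*$ and $\mN$ is open, after shrinking $\eta$ once more every segment $[x_n, R(x_n,\zeta)]$ with $\|\zeta\| \le \eta$ lies in one fixed convex ball $B \subseteq \mN$ for all $n \ge N$.

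Next I would bound $\balpha_n$ from below. For $n \ge N$ and $0 < \alpha \le \eta$, apply the descent inequality on $B$ (valid by~\eqref{eq: Lipschitz condition}) to $\hR(\alpha) = R(x_n, \alpha\xi_n)$, split $\hR(\alpha) - x_n = \alpha\xi_n + r_n(\alpha)$ with $\|r_n(\alpha)\| \le (C+1)\alpha^2$ and $\|\hR(\alpha) - x_n\| \le 2\alpha$, and use the angle condition $\nabla f_n^\trans\xi_n \le -\omega g^-_n$ together with $\|\nabla f_n\| \le G$. This yields
\[
f(\hR(\alpha)) - f_n - c\alpha\nabla f_n^\trans\xi_n \;\le\; \alpha\Bigl(-(1-c)\omega g^-_n + \bigl(G(C+1)+2L\bigr)\alpha\Bigr),
\]
which is strictly negative for $0 < \alpha < \rho g^-_n$ with $\rho = (1-c)\omega/(G(C+1)+2L)$. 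Hence the Armijo equation in~\eqref{eq: definition alpha prime} has no root in $(0, \min(\eta, \rho g^-_n))$, so by Proposition~\ref{prop: propoerties of Armijo point} we get $\balpha_n \ge \min(\eta, \rho g^-_n) \ge c_1 g^-_n$ with $c_1 = \min(\eta/G, \rho)$ (here $g^-_n \le G$ is used). Combining Proposition~\ref{prop: propoerties of Armijo point} with the constraint $\bbeta_n \ge \min(g^-_n, \balpha_n)$ prescribed in Algorithm~\ref{A: abstract Alg} (recall $\|\xi_n\| = 1$) then gives, in every case, $\alpha_n \ge \min(g^-_n, \beta\balpha_n) \ge \kappa_0 g^-_n$ where $\kappa_0 = \min(1, \beta c_1)$.

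To finish, $\alpha_n \to 0$ by (i) and the normalization, so for large $n$ the vector $\zeta = \alpha_n\xi_n$ has $\|\zeta\| = \alpha_n \le \eta$; the uniform estimate then gives $\|x_{n+1} - x_n\| \ge \alpha_n - (C+1)\alpha_n^2 \ge \tfrac12\alpha_n \ge \tfrac{\kappa_0}{2} g^-_n$, which is~\eqref{A3} with $\kappa = \kappa_0/2$. With~\eqref{A1}--\eqref{A2} (Corollary~\ref{cor: mere convergence of line-search}),~\eqref{A3}, and~\eqref{L} at $x^*$ in hand, Theorem~\ref{th: main theorem} delivers $g^-_n \to 0$ and the stated convergence-rate estimates. \textbf{The main difficulty} is bookkeeping rather than any individual estimate: passing from the sequential $\limsup$ bound (ii) to the uniform second-order estimate, and keeping the repeated shrinkings of $\eta$ and the various ``for large $n$'' thresholds mutually compatible, all while the search directions $\xi_n$ carry no a priori norm control --- which is precisely what forces the rescaling at the outset.
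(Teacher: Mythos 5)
Your proposal is correct, but it reaches \eqref{A3} by a noticeably different route than the paper. The paper argues by contradiction: it assumes $\liminf_n \|x_{n+1}-x_n\|/g_n^- = 0$, uses Proposition~\ref{prop: propoerties of Armijo point} to reduce to the case $\bbeta_n > \balpha_n$, and then applies the mean value theorem at the exact Armijo crossing point $\balpha_n$ (mimicking the existence proof for Wolfe step-sizes, cf.~\cite[Lemma~3.1]{NocedalWright2006}) together with \eqref{eq: Lipschitz condition}, the angle condition, and \eqref{eq: abstract property} to force $\liminf_n \|\balpha_n\xi_n\|/g_n^- \ge M^{-2}L^{-1}(1-c)\omega > 0$, a contradiction. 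Crucially, this only ever invokes hypothesis (ii) along the algorithm's own sequences $\alpha_n\xi_n$ and $\balpha_n\xi_n$, so (ii) is used literally as stated. You instead give a direct, quantitative argument: after normalizing $\|\xi_n\|=1$ (legitimate, since the iteration, $\alpha_n$, $\balpha_n$, the angle condition and \eqref{eq: condition for gamma_n} all rescale consistently), you first upgrade the sequential $\limsup$ in (ii) to a uniform local second-order bound via a diagonal/contradiction construction, then use the descent lemma on a convex ball inside $\mN$ to show the Armijo inequality holds strictly for all $\alpha < \min(\eta,\rho g_n^-)$, hence $\balpha_n \ge \min(\eta,\rho g_n^-)$, hence $\alpha_n \gtrsim g_n^-$ by Proposition~\ref{prop: propoerties of Armijo point} and the choice of $\bbeta_n$, and finally \eqref{A3} from the second-order retraction bound; the appeal to Theorem~\ref{th: main theorem} then matches the paper. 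What your route buys is an explicit lower bound for the Armijo step and no contradiction bookkeeping; what it costs is the uniformization step, which the paper's asymptotic argument avoids. That step is fine in substance, though your "interleave with zeros" construction has a cosmetic wrinkle (the ratio in \eqref{eq: abstract property} is $0/0$ at the padded indices, since $R(x_n,0)=x_n$); this is harmless under any reasonable reading of (ii), or can be avoided by noting that the offending vectors already sit along a strictly increasing subsequence of indices and the $\limsup$ over the indices where the ratio is defined exceeds $C$. Both proofs deliver \eqref{A3} only with an unspecified constant $\kappa$, consistent with the statement.
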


We discuss these two conditions after the proof.

\begin{proof}
We can assume $g_n^- > 0$ for all $n$, since otherwise the sequence becomes stationary. Then we have $\| \alpha_n \xi_n \| > 0$ for all $n$. We have to show that $\liminf_{n \to \infty} \| x_{n+1} - x_n \| / g_n^- > 0$. We do this by showing that the assumption $\liminf_{n \to \infty} \| x_{n+1} - x_n \| / g_n^- = 0$ leads to a contradiction. In the following we consider a subsequence which converges to the limes inferior, but for notational convenience we assume that 
\begin{equation}\label{eq: liminf assumption}
\lim_{n \to \infty} \frac{\| x_{n+1} - x_n \|}{g_n^-} = 0.
\end{equation}

Fix $m \in (0,1)$. As $\alpha_n \xi_n \to 0$,~\eqref{eq: abstract property} implies that for large enough $n$ we will have
\[
\| \alpha_n \xi_n \| \le \| x_{n+1} - x_n - \alpha_n \xi_n \| + \| x_{n+1} -x_n \| \le C \| \alpha_n \xi_n \|^2 + \| x_{n+1} - x_n \|.
\]
We consider $n$ so large that $m \le (1 - C \|\alpha_n \xi_n \|)$ or, after rearranging,
\begin{equation}\label{eq: asymptotic lower bound for retraction}
 m\| \alpha_n \xi_n \| \le \| x_{n+1} - x_n\|.
\end{equation}
Since $\mN$ is open and $x_n \to x^* \in \mN$ and $\hat{\xi}_n \to 0$, it also holds that
\begin{equation}\label{eq: z in Lipschitz domain}
x_n + z \in \mN \quad \text{for all $z$ with $\| z \| \le M \beta^{-1} \| \hat{\xi}_n \|$}
\end{equation}
if only $n$ is large enough. Hence we may assume, without loss of generality, that~\eqref{eq: asymptotic lower bound for retraction} and~\eqref{eq: z in Lipschitz domain} hold for all $n$. Now we distinguish the iterates by two disjoint cases: $\bbeta_n \le \balpha_n$ and $\bbeta_n > \balpha_n$. In the first case, we have $\alpha_n = \bbeta_n$ by Proposition~\ref{prop: propoerties of Armijo point}, 
which by the choice of $\bbeta_n$ in the algorithm according to~\eqref{eq: condition for gamma_n} gives
\[
\| x_{n+1} - x_n \| \ge m \| \alpha_n \xi_n \| \ge m g_n^-.
\]
Assumption~\eqref{eq: liminf assumption} implies that this happens only for finitely many $n$. Let us therefore assume that the second case $\bbeta_n > \balpha_n$ always occurs. In this case, Proposition~\ref{prop: propoerties of Armijo point} states that $\balpha_n \le \beta^{-1} \alpha_n$. Hence, by~\eqref{eq: asymptotic lower bound for retraction} and~\eqref{eq: liminf assumption},
\begin{equation}\label{eq: liminf for alpha prime}
\lim_{n \to \infty} \frac{\| \balpha_n \xi_n \|}{g_n^-} \le \lim_{n \to \infty} \frac{m^{-1} \beta^{-1}   \| x_{n+1} - x_n \|}{g_n^-} = 0.
\end{equation}

We now show that~\eqref{eq: liminf for alpha prime} leads to a contradiction by mimicking arguments that are used to prove existence of step-sizes satisfying the strong Wolfe conditions in linear spaces, e.g.,~\cite[Lemma~3.1]{NocedalWright2006}. Let again $\hR(\alpha) = R(x_n, \alpha \xi_n)$. By the mean value theorem, there exists $\vartheta \in (0,1)$ such that $z = \vartheta(\hR(\balpha_n) - x_n)$ satisfies
\begin{equation}\label{eq: mean value theorem}
(\hR(\balpha_n) - x_n)^\trans \nabla f(x_n + z) = f(\hR(\balpha_n)) - f_n = c \balpha_n \xi_n^\trans \nabla f_n,
\end{equation}
where the second equality holds by definition~\eqref{eq: definition alpha prime}. By~\eqref{eq: upper bound for retraction}, $\| z \| \le M\|\balpha_n \xi_n \| \le M\beta^{-1}\|\alpha_n \xi_n\|$ so that $x_n + z \in \mN$ by~\eqref{eq: z in Lipschitz domain}. Using~\eqref{eq: Lipschitz condition}, Cauchy--Schwarz, the definition of $z$, the reverse triangle inequality, and the angle condition~\eqref{eq: angle condition}, we can estimate:
\begin{equation*}\label{eq: big estimate}
\begin{aligned}
\| z \| \| \hR(\balpha_n) - x_n \| &\ge L^{-1} \| \nabla f_n - \nabla f(x_n + z) \| \| \hR(\balpha_n) - x_n \| \\
&\ge L^{-1}\lvert \nabla f_n^\trans(\hR(\balpha_n) - x_n) - c \balpha_n \nabla f_n^\trans \xi_n^{} \rvert \\
&\ge L^{-1} ((1-c) \omega g_n^- \| \balpha_n \xi_n \| - \lvert \nabla f_n^\trans(\hR(\balpha_n) - (x_n + \balpha_n \xi_n)) \rvert).
\end{aligned}
\end{equation*}
Since we have $\| \balpha_n \xi_n \| \ge M^{-1} \| \hR(\balpha_n) - x_n \| \ge M^{-1} \|z\|$, we arrive at
\begin{equation}\label{eq: arrive at}
\frac{\| \balpha_n \xi_n \|}{g_n^-} \ge M^{-2} L^{-1} \left( (1-c)\omega - \frac{\lvert \nabla f_n^\trans(\hR(\balpha_n) - (x_n + \balpha_n \xi_n)) \rvert}{g_n^- \| \balpha_n \xi_n \|} \right).
\end{equation}
By assumption, $\|\balpha_{n} \xi_{n} \| \le \beta^{-1} \| \alpha_{n} \xi_{n} \| \to 0$. Since $\nabla f$ is continuous, it follows from Cauchy--Schwarz,~\eqref{eq: abstract property}, and~\eqref{eq: liminf for alpha prime} that
\[
\lim_{k \to \infty} \frac{\lvert \nabla f_n^\trans(\hR(\balpha_n) - (x_n + \balpha_n \xi_n)) \rvert}{g_n^- \| \balpha_n \xi_n \|} \le \lim_{k \to \infty} \frac{\|\nabla f(x^*)\| C \| \balpha_n \xi_n \|^2}{g_n^- \| \balpha_n \xi_n \|} = 0.
\]
Therefore,~\eqref{eq: arrive at} yields
\[
 \liminf_{n \to \infty} \frac{\| \balpha_n \xi_n \|}{g_n^-} \ge M^{-2}L^{-1}(1-c)\omega > 0,
\]
in contradiction to~\eqref{eq: liminf for alpha prime}.
\qquad\end{proof}

\changed{
Property~\eqref{eq: abstract property} in Theorem~\ref{th: abstract theorem for line-search} holds, for instance, if $\mM$ is locally a smooth submanifold in a neighborhood of the limit $x^*$ of $(x_n)$, and $R$ is a smooth retraction in that neighborhood (one can  locally bound the second derivates of $\xi_x \mapsto R(x,\xi_x)$). On the other hand, the condition $\alpha_n \xi_n \to 0$ is reasonable, but cannot be removed in general.\footnote{The reason is that our requirements on $R$ are only of a local kind. Consider, as a counterexample, the exponential retraction on a unit sphere; i.e., $R(x,\xi)$ is the endpoint of the arc of length $\|\xi_x\|$ in the great circle from $x$ in direction $\xi$. Then for $\|\xi_x\| \to 2\pi$ it holds that $x - R(x,\xi_x) \to 0$.} When $R$ is concretely specified, the situation can change. Considering the relevant example where a metric projection is used as retraction, it turns out that $x_{n+1} - x_n \to 0$ in combination with $x_n \to x^*$ automatically implies $\alpha_n \xi_n \to 0$ in the smooth case. This leads to the following powerful corollary of Theorem~\ref{th: abstract theorem for line-search}.}

\begin{corollary}\label{cor: smooth line-search result}
Let $f$ be real-analytic and bounded below. Assume the metric projection~\eqref{eq: best approximation} (the choice of norm does not matter here) is used as retraction in Algorithm~\ref{A: abstract Alg}. Further assume a cluster point $x^*$ of the sequence $(x_n)$  produced by Algorithm~\ref{A: abstract Alg} exists, satisfies the \L{}ojasiewicz gradient inequality~\eqref{L}, and possesses an open neighborhood $\mO \subseteq \R^N$ such that $\mM \cap \mO$ is a smooth embedded submanifold of $\R^N$. Then~\eqref{A1}--\eqref{A3} hold. Consequently, $\lim_{n \to \infty} x_n = x^*$ with a rate of convergence as indicated in Theorem~\ref{th: main theorem}, and $\lim_{n \to \infty} g_n^- = g^-(x^*) = 0$.
\end{corollary}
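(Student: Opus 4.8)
The plan is to derive Corollary~\ref{cor: smooth line-search result} from Theorem~\ref{th: abstract theorem for line-search} by verifying its two extra hypotheses (i) $\alpha_n\xi_n \to 0$ and (ii) the uniform second-order estimate~\eqref{eq: abstract property}, in the special situation where the retraction is the metric projection~\eqref{eq: best approximation} and $\mM$ is, near the cluster point $x^*$, a smooth embedded submanifold. First I would invoke Corollary~\ref{cor: mere convergence of line-search}: since $f$ is real-analytic it satisfies the \L{}ojasiewicz inequality~\eqref{L} at $x^*$ (as does the projected-antigradient version, via Proposition~\ref{prop: Lojasiewicz for real-analytic manifolds} applied to a local chart of the smooth manifold $\mM\cap\mO$), hence~\eqref{A1} and~\eqref{A2} hold, and $(x_n)$ converges to $x^*$; in particular $x_{n+1}-x_n \to 0$. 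Also, real-analytic functions are $C^1$ with locally Lipschitz gradient, so~\eqref{eq: Lipschitz condition} holds in a neighborhood of $x^*$, and $f$ is assumed bounded below on all of $\R^N$.

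Next I would establish~(ii). On $\mM\cap\mO$, a smooth embedded submanifold, the metric projection onto $\mM$ is well-defined and smooth in a tube around $x^*$, and its restriction to the affine tangent space at any nearby point $x_n$ is a smooth retraction there; the standard second-order Taylor estimate for such a projection gives $\| R(x_n,\hat\xi_n) - (x_n+\hat\xi_n)\| \le C\|\hat\xi_n\|^2$ for $\hat\xi_n\in T_n\mM$ small, with $C$ controlled by a local curvature bound (a bound on the second fundamental form of $\mM$ on a compact neighborhood of $x^*$). Since $x_n\to x^*$ and $\hat\xi_n\to 0$, eventually all relevant points lie in this neighborhood, so~\eqref{eq: abstract property} holds. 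This is exactly the observation made in the \emph{changed} paragraph preceding the corollary: once the limit is known to be on a smooth piece of $\mM$, one gets \emph{some} curvature bound near it for free — no a priori global bound on $\mM$ (which fails for $\mM_k$) is needed.

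The remaining, and genuinely delicate, step is~(i): $\alpha_n\xi_n\to 0$. The point of the footnote-style remark in the text is that $R$ being only locally well-behaved does not by itself force the steps to shrink; but with the metric projection and smoothness near $x^*$ this does follow. The argument I would give: on the tube around $x^*$, the metric projection $P$ is $1$-Lipschitz onto $\mM$ in the orthogonal complement directions and, restricted near $\mM$, satisfies $\|P(z)-z\| = \dist(z,\mM)$; combined with~\eqref{eq: asymptotic approximation of stepsize} and the smoothness, for $\alpha_n\xi_n$ small one has $\|R(x_n,\alpha_n\xi_n)-x_n\|\ge \tfrac12\|\alpha_n\xi_n\|$, hence $x_{n+1}-x_n\to0$ would give $\alpha_n\xi_n\to 0$ — \emph{provided} we already know the steps are eventually small enough to be in the tube. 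The subtlety is the uniformity: I would argue that since $x_n\to x^*$ and $x_{n+1}=R(x_n,\alpha_n\xi_n)\to x^*$, and $R$ here is the metric projection (so $\|R(x_n,\alpha_n\xi_n)-(x_n+\alpha_n\xi_n)\| = \dist(x_n+\alpha_n\xi_n,\mM) \le \|x_{n+1}-x_n\| + \|x_n - (x_n+\alpha_n\xi_n)\|$ is not directly bounded, but) one can use that $x_n+\alpha_n\xi_n$ lies in the affine tangent space at $x_n$ and its projection $x_{n+1}$ is close to $x_n$; a compactness/continuity argument on the smooth manifold then rules out large tangential steps whose projection returns near the base point. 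I expect this uniform control of $\alpha_n\|\xi_n\|$ — translating "$x_{n+1}-x_n$ small and on a smooth manifold" into "$\alpha_n\xi_n$ small" — to be the main obstacle; everything else is either a direct citation of earlier results or a routine local differential-geometry estimate. Once~(i) and~(ii) are in hand, Theorem~\ref{th: abstract theorem for line-search} delivers~\eqref{A3}, the convergence rate, and $g_n^-\to 0$; and because $\mM$ is a smooth manifold near $x^*$, $g^-$ is continuous there (as noted in section~\ref{sec: optimality conditions}), so $g^-(x^*) = \lim g_n^- = 0$, i.e. $x^*$ is critical.
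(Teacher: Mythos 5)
Your overall structure matches the paper's: reduce to Theorem~\ref{th: abstract theorem for line-search} by verifying conditions (i) and (ii), with the mere convergence $x_n \to x^*$ (hence $x_{n+1}-x_n\to 0$) coming from Corollary~\ref{cor: mere convergence of line-search}, the Lipschitz condition from analyticity, and $g^-(x^*)=\lim g_n^-=0$ from the continuity of $g^-$ on the smooth piece. Your treatment of (ii) is essentially the paper's argument in different clothing: the paper compares $R(x_n+\hat\xi_n)$ against the explicit candidate $\phi(y_n+\hat\eta_n)$ via the best-approximation property and Taylor-expands the local parametrization $\phi$, whereas you invoke a second-fundamental-form/tube bound; both yield the quadratic estimate with a constant that is only needed \emph{locally near} $x^*$, which is exactly the point.

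The genuine gap is in (i), and you have correctly identified it yourself: you never actually prove that ``$x_{n+1}-x_n\to 0$ on a smooth manifold'' implies ``$\alpha_n\xi_n\to 0$'' without first assuming the steps are small. Your proposed fallback, a ``compactness/continuity argument ruling out large tangential steps whose projection returns near the base point,'' does not obviously work: a priori $\alpha_n\|\xi_n\|$ need not even be bounded (the only free inequality is $\dist(x_n+\alpha_n\xi_n,\mM)\le\|\alpha_n\xi_n\|$, which gives $\|x_{n+1}-x_n\|\le 2\|\alpha_n\xi_n\|$ and nothing in the other direction), so there is no convergent subsequence of steps to extract. The missing idea in the paper is the first-order optimality condition of the metric projection: the residual $x_{n+1}-(x_n+\alpha_n\xi_n)$ is orthogonal to the tangent space at the \emph{landing} point $x_{n+1}$. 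Writing $x_n=\phi(y_n)$ in a local parametrization, this reads $\phi'(y_{n+1})^\trans\bigl(x_{n+1}-x_n-\alpha_n\xi_n\bigr)=0$, hence $\alpha_n\,\phi'(y_{n+1})^\trans\xi_n=\phi'(y_{n+1})^\trans(x_{n+1}-x_n)\to 0$; since $\xi_n\in\ran(\phi'(y_n))$ and $\phi'(y_{n+1})^\trans\phi'(y_n)\to\phi'(0)^\trans\phi'(0)$ is invertible, one gets $\|\phi'(y_{n+1})^\trans\xi_n\|\ge c\|\xi_n\|$ for large $n$ and therefore $\alpha_n\xi_n\to 0$ \emph{regardless of how long the step is}. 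This normal-equation argument is precisely the uniform control you flagged as ``the main obstacle''; without it, your proof of (i) is incomplete.
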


\changed{
{\em Remark}. More concisely one may assume that $\mM \cap \mO$ is a real-analytic submanifold of $\R^N$~\cite[Definition~2.7.1]{KrantzParks2002}. Then the assumption on the validity of~\eqref{L} is superfluous; cf. the remark following Proposition~\ref{prop: Lojasiewicz for real-analytic manifolds}.
}

\begin{proof}
By Corollary~\ref{cor: mere convergence of line-search}, $x_n \to x^*$ and $\lim_{n \to \infty} g_n^- = g^-(x^*)$ (since on a smooth manifold $g^-$ is a continuous function). For completeness, we now sketch the more or less elementary arguments that $\alpha_n \xi_n \to 0$ and~\eqref{eq: abstract property} hold. Then Theorem~\ref{th: abstract theorem for line-search} applies (the local Lipschitz condition for the gradient follows from the analyticity assumption).

There exists a local diffeomorphism $\phi$ from a neighborhood of $0 \in T_{x^*} \mM$ (which is a linear space now) to $\mM$ such that for large enough $n$ we can write $x^* = \phi(0)$, $x_n = \phi(y_n)$, and $T_n \mM = \ran(\phi'(y_n))$. The optimality condition for $x_{n+1} = R(x_n + \alpha_n \xi_n)$ when it is the orthogonal projection of $x_n + \alpha_n \xi_n$ is that the error is orthogonal on the tangent space at $x_{n+1}$, i.e.,
\[
0 =  \eta^\trans \phi'(y_{n+1})^\trans (x_{n+1} - (x_n + \alpha_n \xi_n) )  \quad \text{for all $\eta \in T_{x^*} \mM$.}
\]
As $x_{n+1} - x_n \to 0$, this implies
\[
0 = \lim_{n \to \infty} \alpha_n \phi'(y_{n+1})^\trans \xi_n.
\]
Since the smallest singular value of $\phi'(y_{n+1})^\trans$ can be uniformly bounded below for $n$ large enough (the limit $\phi'(0)^\trans$ has full rank), it follows that $\alpha_n \xi_n \to 0$. Further, for any $\hat{\xi}_n = \phi'(y_n)\hat{\eta}_n$ we have by the best approximation property of $R$ and Taylor's theorem that
\[
\| R(x_n + \hat{\xi}_n) - (x_n + \hat{\xi}_n) \| \le \| \phi(y_n + \hat{\eta}_n) - (\phi(y_n) + \phi'(y_n) \hat{\eta}_n ) \| \le \| \phi''(x_n + \vartheta_n \hat{\xi}_n)\| \|\hat{\eta}_n \|^2
\]
for some $\vartheta_n \in (0,1)$. If $\hat{\xi}_n \to 0$ for $n \to \infty$, then it follows that
\[
\limsup_{n \to \infty} \frac{\| R(x_n + \hat{\xi}_n) - (x_n + \hat{\xi}_n) \|}{\| \hat{\xi}_n \|^2} \le \| \phi''(0) \| \| (\phi'(0))^{-1} \|,
\]
since $\phi''$ is continuous in zero.
\qquad\end{proof}

\section{Results for matrix varieties of bounded rank}\label{sec: application to low-rank matrices}

The space $\R^m \otimes \R^n \cong \R^{m \times n} \cong \R^{mn}$ becomes a Euclidean space when equipped with the Frobenius inner product $\langle X, Y \rangle_\frob = \tr(X^\trans Y)$. The corresponding norm and distance function are denoted by $\| \cdot \|_\frob$ and $\dist_\frob$, respectively. Points in this space will now be denoted by $X$ instead of $x$, tangent vectors by $\Xi$ instead of $\xi$. Mainly to save space, we prefer in this paper the subspace and tensor product notation over explicit matrix representations. However, if we use the latter (as in the definition of the inner product), then it is with respect to some fixed orthonormal bases in $\R^m$ and $\R^n$. For example, writing $X \in \mU \otimes \mV$ in $\R^m \otimes \R^n$ would mean in $\R^{m \times n}$ that $X = U S V^\trans$ for some matrices $U,S,V$ with $\ran(U) = \mU$ and $\ran(V) = \mV$. By $\Pi_\mS$ we denote the orthogonal projection onto a subspace $\mS$. Then $(\Pi_\mU \otimes \Pi_\mV)X$ corresponds to $U U^\trans X V V^\trans$, 
where $U$ and $V$ are orthonormal basis representations for $\mU$ and $\mV$, respectively. 

In this section we apply the above convergence theory for line-search methods to the real-algebraic variety $\mM_{\le k}$ of matrices with rank at most $k$ (see~\eqref{eq: bounded rank matrices}). We consider the problem
\begin{equation}\label{eq: low-rank min problem}
\min_{X \in \mM_{\le k}} f(X),
\end{equation}
where, as before, $f \vcentcolon \R^{m \times n} \supseteq \mD \to \R$ is continuously differentiable and bounded below. In fact, in the end we will assume that $f$ is real-analytic to ensure the \L{}ojasiewicz gradient inequality.

\subsection{Tangent cone and optimality}\label{sec: tangent cone of low-rank matrices}

Here and in the following, we suppose that
\[
\rank(X) = s \le k, \quad \mU = \ran(X), \quad \mV = \ran(X^\trans).
\]
The following is well known; see, e.g.,~\cite{HelmkeShayman1995,KochLubich2007,Vandereycken2013}.

\begin{theorem}\label{th: smoothness of full-rank manifolds}
The set $\mM_s$ of rank-$s$ matrices is a smooth submanifold of dimension $(m + n - s)s$. It is dense and relatively open in $\mM_{\le s}$. The tangent space of $\mM_s$ at $X$ is
\begin{equation}\label{eq: tangent space Ms}
T_X \mM_s = (\mU \otimes \mV) \oplus (\mU^\bot \otimes \mV) \oplus (\mU \otimes \mV^\bot).
\end{equation}
The orthogonal projector on $T_X \mM_s$ is hence given by
\begin{equation}\label{eq: projector on tangent space}
\Pi_{T_X \mM_s} = \Pi_\mU \otimes \Pi_\mV + \Pi_{\mU^\bot} \otimes \Pi_\mV + \Pi_\mU \otimes \Pi_{\mV^\bot} = \Pi_\mU \otimes I + I \otimes \Pi_\mV - \Pi_\mU \otimes \Pi_\mV,
\end{equation}
and it holds that
\begin{equation}\label{eq: orthogonal decomposition}
T_X \mM_s \oplus (\mU^\bot \otimes \mV^\bot) = \R^m \otimes \R^n.
\end{equation}
\end{theorem}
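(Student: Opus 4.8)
The plan is to establish Theorem~\ref{th: smoothness of full-rank manifolds} in four conceptually separate pieces, each of which is essentially a computation once the right normal form is fixed. First I would reduce to a convenient coordinate system: after choosing orthonormal bases adapted to $\mU = \ran(X)$ and $\mV = \ran(X^\trans)$, one may write $X$ in block form as $\begin{bmatrix} \Sigma & 0 \\ 0 & 0\end{bmatrix}$ with $\Sigma \in \R^{s\times s}$ invertible (diagonal, if one takes an SVD). Everything that follows — the dimension count, the tangent space, the projector formula, the orthogonal complement — is invariant under the orthogonal change of bases, so it suffices to verify the claims at this normal form.

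\emph{Smoothness and dimension.} I would exhibit $\mM_s$ locally as the zero set of a submersion, or equivalently as a graph. Near $X$, a matrix $Y = \begin{bmatrix} A & B \\ C & D\end{bmatrix}$ (with $A$ close to $\Sigma$, hence invertible) has rank exactly $s$ if and only if the Schur complement $D - C A^{-1} B$ vanishes. Thus in a neighborhood $\mM_s$ is the graph of the smooth map $(A,B,C) \mapsto D = C A^{-1} B$, which is an $(ms + ns - s^2) = (m+n-s)s$-dimensional smooth submanifold; the identical argument with the constraint $\rank \le s$ replaced shows $\mM_{\le s}$ coincides with $\mM_s$ on this neighborhood, giving relative openness, and density of $\mM_s$ in $\mM_{\le s}$ follows since any rank-$<s$ matrix is a limit of rank-$s$ matrices (perturb the zero block).

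\emph{Tangent space.} Differentiating the graph parametrization at $X$: the derivative of $D = CA^{-1}B$ at $(A,B,C) = (\Sigma, 0, 0)$ is zero (it is bilinear in $B$ and $C$ and both vanish), so the tangent space consists of all $\begin{bmatrix} \dot A & \dot B \\ \dot C & 0\end{bmatrix}$, i.e. all matrices whose $\mU^\bot \otimes \mV^\bot$ block is zero. Translating back, this is exactly $(\mU\otimes\mV)\oplus(\mU^\bot\otimes\mV)\oplus(\mU\otimes\mV^\bot)$, which is~\eqref{eq: tangent space Ms}. Alternatively, and perhaps cleaner, I would verify directly that these three summands are mutually orthogonal (they are, since $\mU\perp\mU^\bot$ and $\mV\perp\mV^\bot$ and the Frobenius product factors over the tensor structure) and that each is realized by an explicit curve in $\mM_s$ through $X$; combined with the dimension count, this pins down $T_X\mM_s$.

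\emph{Projector and complement.} The orthogonal projector onto a direct sum of mutually orthogonal subspaces is the sum of the projectors, which gives the first expression in~\eqref{eq: projector on tangent space}; the second follows from the algebraic identity $\Pi_\mU\otimes\Pi_\mV + \Pi_{\mU^\bot}\otimes\Pi_\mV + \Pi_\mU\otimes\Pi_{\mV^\bot} = \Pi_\mU\otimes I + I\otimes\Pi_\mV - \Pi_\mU\otimes\Pi_\mV$, obtained by substituting $\Pi_{\mU^\bot} = I - \Pi_\mU$ and $\Pi_{\mV^\bot} = I - \Pi_\mV$ and expanding. Finally,~\eqref{eq: orthogonal decomposition} is immediate once we know $T_X\mM_s$ is the orthogonal complement of $\mU^\bot\otimes\mV^\bot$ inside $\R^m\otimes\R^n$, which is visible from the block picture (the four blocks $\mU\otimes\mV$, $\mU^\bot\otimes\mV$, $\mU\otimes\mV^\bot$, $\mU^\bot\otimes\mV^\bot$ give an orthogonal decomposition of the whole space). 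I do not expect a genuine obstacle here; the only point requiring care is making the local graph argument rigorous enough to simultaneously deliver smoothness, the dimension, \emph{and} the identification $\mM_s = \mM_{\le s}$ locally, rather than treating these as separate facts — but since the reference literature (e.g.~\cite{HelmkeShayman1995,KochLubich2007,Vandereycken2013}) already contains this, a brief self-contained version suffices.
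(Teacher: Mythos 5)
Your proof is correct. Note that the paper itself does not prove this theorem at all --- it simply cites the literature (Helmke--Shayman, Koch--Lubich, Vandereycken), so there is no ``paper proof'' to compare against; your Schur-complement/graph argument is exactly the standard one found in those references, and it is sound: the block normal form, the local identification of $\mM_{\le s}$ with the graph of $(A,B,C)\mapsto CA^{-1}B$ near an invertible $A$-block (which simultaneously yields smoothness, the dimension count $(m+n-s)s$, and relative openness), the vanishing of the derivative of the graph map at $(\Sigma,0,0)$ giving the tangent space \eqref{eq: tangent space Ms}, and the projector and complement formulas \eqref{eq: projector on tangent space}--\eqref{eq: orthogonal decomposition} from the mutual orthogonality of the four blocks. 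One small remark: relative openness can be had even more cheaply from lower semicontinuity of rank ($\mM_s=\mM_{\le s}\cap\{\rank\ge s\}$), and since $A\mapsto A^{-1}$ is real-analytic your chart argument in fact delivers that $\mM_s$ is a \emph{real-analytic} submanifold, which the paper establishes separately and by a different route in Lemma~\ref{lem: M_s real-analytic submanifold} via the submersion $(U,V)\mapsto UV^\trans$; your approach would cover that lemma as well.
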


\changed{
In fact, $\mM_s$ is even a real-analytic submanifold; see Lemma~\ref{lem: M_s real-analytic submanifold}.
}

Our main task is to investigate the tangent cones of $\mM_{\le k}$ in points with $s < k$. The tangent cone $T_X \mM_{\le k}$ clearly contains $T_X \mM_s$, but, in case $s< k$, also contains rays that arise when approaching $X$ by a matrix of rank at most $k$ but larger than $s$.

\begin{theorem}{\upshape(see~\cite{CasonAbsilVanDooren2013})}\label{prop: form of tangent cone}
Let $X \in \mM_{\le k}$, $\rank(X) = s$. The tangent cone of $\mM_{\le k}$ at $X$ is
\[
T_X \mM_{\le k} = T_X \mM_s \oplus \{ \Xi_{k-s} \in \mU^\bot \otimes \mV^\bot \vcentcolon \rank(\Xi_{k-s}) \le k-s \}.
\]
\end{theorem}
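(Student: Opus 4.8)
The plan is to establish the two inclusions separately, using the description \eqref{eq: definition of tangent cone} of the tangent cone. Throughout, write $\mathcal{D}_{k-s} = \{ \Xi_{k-s} \in \mU^\bot \otimes \mV^\bot : \rank(\Xi_{k-s}) \le k-s \}$, and note that by \eqref{eq: orthogonal decomposition} the sum $T_X\mM_s \oplus (\mU^\bot \otimes \mV^\bot)$ is the whole space, so $T_X\mM_s \oplus \mathcal{D}_{k-s}$ is a well-defined direct sum inside $\R^m\otimes\R^n$ and every element decomposes uniquely as $\Xi_0 + \Xi_{k-s}$ with $\Xi_0 \in T_X\mM_s$ and $\Xi_{k-s}\in\mathcal{D}_{k-s}$.

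\emph{The inclusion $\supseteq$.} First I would show that any $\Xi = \Xi_0 + \Xi_{k-s}$ of the stated form lies in $T_X\mM_{\le k}$ by exhibiting an explicit curve. Since $\mM_s$ is a smooth manifold (Theorem~\ref{th: smoothness of full-rank manifolds}) with tangent space $T_X\mM_s$, there is a smooth curve $\gamma_0 \colon (-\epsilon,\epsilon)\to\mM_s$ with $\gamma_0(0)=X$ and $\dot\gamma_0(0)=\Xi_0$. Writing $\Xi_{k-s}$ in reduced form as $\Xi_{k-s} = \sum_{i=1}^{k-s}\sigma_i\, p_i q_i^\trans$ with $p_i\in\mU^\bot$, $q_i\in\mV^\bot$ orthonormal systems, I would consider $\gamma(t) = \gamma_0(t) + t\,\Xi_{k-s}$. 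For small $t\ne 0$ the matrix $\gamma(t)$ has rank exactly $s + \rank(\Xi_{k-s}) \le k$ because the column space of $\gamma_0(t)$ stays close to $\mU$ (hence nearly orthogonal to $\spann\{p_i\}$) and similarly for the row spaces — a standard continuity/perturbation argument makes the two blocks independent. Thus $\gamma(t)\in\mM_{\le k}$, $\gamma(0)=X$, $\dot\gamma(0)=\Xi_0+\Xi_{k-s}=\Xi$, and taking $x_n = \gamma(t_n)$, $a_n = 1/t_n$ for $t_n\to 0^+$ in \eqref{eq: definition of tangent cone} shows $\Xi\in T_X\mM_{\le k}$.

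\emph{The inclusion $\subseteq$.} Conversely, let $\Xi\in T_X\mM_{\le k}$, so there are $X_n\to X$ in $\mM_{\le k}$ and $a_n\to\infty$ (or bounded away from $0$, but the interesting case is $a_n\to\infty$) with $a_n(X_n - X)\to\Xi$. Decompose $\Xi = \Xi_0 + \Xi_1$ with $\Xi_0 = \Pi_{T_X\mM_s}\Xi$ and $\Xi_1 = (\Pi_{\mU^\bot}\otimes\Pi_{\mV^\bot})\Xi \in \mU^\bot\otimes\mV^\bot$ via \eqref{eq: orthogonal decomposition}; it remains only to bound $\rank(\Xi_1)$ by $k-s$. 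The point is that $\rank(X_n)\le k$, while $X_n = X + a_n^{-1}(a_n(X_n-X))$; applying $\Pi_{\mU^\bot}\otimes\Pi_{\mV^\bot}$ kills $X$ entirely (since $\ran X=\mU$, $\ran X^\trans = \mV$), so $(\Pi_{\mU^\bot}\otimes\Pi_{\mV^\bot})X_n = a_n^{-1}(\Pi_{\mU^\bot}\otimes\Pi_{\mV^\bot})(a_n(X_n-X))\to 0$ times the rank-$\le\rank(\Xi_1)$ limit, and more importantly $(\Pi_{\mU^\bot}\otimes\Pi_{\mV^\bot})X_n \to 0$ while $a_n(\Pi_{\mU^\bot}\otimes\Pi_{\mV^\bot})X_n \to \Xi_1$. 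Now I would invoke lower semicontinuity of rank together with the rank bound on $X_n$: since $X$ has rank $s$ with column space $\mU$ and row space $\mV$, for large $n$ the matrix $X_n$ has an $s$-dimensional "piece" aligned with the $\mU,\mV$ block that is essentially fixed, and the $\mU^\bot\otimes\mV^\bot$ block of $X_n$ can then carry rank at most $k-s$; passing to the limit, $\rank(\Xi_1)\le k-s$. Concretely: write $X_n$ in block form with respect to the orthogonal splitting $(\mU\oplus\mU^\bot)\otimes(\mV\oplus\mV^\bot)$, observe the $(\mU,\mV)$ block converges to the invertible-on-$\mU$ part of $X$ (rank $s$) and the off-diagonal blocks go to zero, so a Schur-complement/continuity argument forces $\rank$ of the $(\mU^\bot,\mV^\bot)$ block to be eventually $\le k-s$; and since $a_n\big((\Pi_{\mU^\bot}\otimes\Pi_{\mV^\bot})X_n\big)\to\Xi_1$, rank lower semicontinuity under this scaling gives $\rank(\Xi_1)\le k-s$.

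\emph{Main obstacle.} The routine direction is $\supseteq$; the genuine content is in $\subseteq$, specifically the rank bookkeeping: one must argue rigorously that a rank-$\le k$ matrix $X_n$ converging to a rank-$s$ matrix $X$ with $\ran X=\mU$, $\ran X^\trans=\mV$ must eventually have its $\mU^\bot\otimes\mV^\bot$-component of rank at most $k-s$, and that this survives multiplication by $a_n\to\infty$ in the limit. I expect to handle this cleanly by fixing orthonormal bases adapted to $\mU\oplus\mU^\bot$ and $\mV\oplus\mV^\bot$, writing $X_n = \begin{bmatrix} A_n & B_n \\ C_n & D_n\end{bmatrix}$ with $A_n\to A$ invertible on $\R^s$ and $B_n,C_n,D_n\to 0$; for large $n$, $\rank X_n = s + \rank(D_n - C_n A_n^{-1} B_n)$ by the Schur complement, so $\rank(D_n - C_n A_n^{-1}B_n)\le k-s$, and since $a_n C_n A_n^{-1} B_n = a_n^{-1}(a_n C_n)A_n^{-1}(a_n B_n)\to 0$ (two factors of the scaled sequence, only one power of $a_n$), we get $a_n D_n\to \Xi_1$ with $\rank(\Xi_1)\le\liminf\rank(D_n - C_nA_n^{-1}B_n)\le k-s$. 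This is the step I would write out in full detail; everything else is direct.
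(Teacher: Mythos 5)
Your proposal is correct, and its overall architecture -- the two inclusions, the orthogonal splitting $T_X\mM_s \oplus (\mU^\bot\otimes\mV^\bot)$ of the ambient space, and lower semicontinuity of rank at the end -- coincides with the paper's. The differences sit at the key rank estimates, and they cut both ways. For ``$\supseteq$'' the paper takes a sequence $Y_n \in \mM_s$ realizing $\Xi_s$ and sets $X_n = Y_n + a_n^{-1}\Xi_{k-s}$; membership in $\mM_{\le k}$ is then immediate from subadditivity of rank, $\rank(Y_n + a_n^{-1}\Xi_{k-s}) \le s + (k-s)$, so your continuity/perturbation argument for the \emph{exact} rank of $\gamma(t)$ is superfluous and is the only soft spot in that half -- you only need ``$\le$''. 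For ``$\subseteq$'', by contrast, your Schur-complement bookkeeping is \emph{sharper} than what the paper writes: the paper deduces from $\rank(A_n)=s$ and $\rank(X_n)\le k$ that $\rank\bigl((\Pi_{\mU^\bot}\otimes\Pi_{\mV^\bot})X_n\bigr)\le k-s$, which is not literally true as stated (take $k=s=1$ and $X_n=(e_1+\epsilon_n e_2)(e_1+\epsilon_n e_2)^\trans\to e_1e_1^\trans$: the $(\mU^\bot,\mV^\bot)$ block is $\epsilon_n^2\neq 0$, of rank $1>k-s=0$). Your own middle paragraph briefly repeats this same loose claim (``the $\mU^\bot\otimes\mV^\bot$ block of $X_n$ can then carry rank at most $k-s$''), but the final paragraph supersedes it with the correct statement: Guttman's identity $\rank(X_n)=s+\rank(D_n-C_nA_n^{-1}B_n)$ gives $\rank(D_n-C_nA_n^{-1}B_n)\le k-s$, the correction term satisfies $a_nC_nA_n^{-1}B_n=(a_nC_n)A_n^{-1}B_n\to 0$ (one convergent scaled factor times a vanishing one), hence $a_n(D_n-C_nA_n^{-1}B_n)\to\Xi_1$ and semicontinuity of rank yields $\rank(\Xi_1)\le k-s$; in the example above this correctly forces $\Xi_1=0$. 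So: same route, but write out the Schur-complement step in full as you propose (it is the rigorous version of the one delicate point), delete the loose sentence it replaces, and simplify the first inclusion to plain rank subadditivity.
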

\begin{proof}
To prove the ``$\supseteq$'' part, let be $\Xi$ an element from the set on the right side of the equality. Then $\Xi = \Xi_s + \Xi_{k-s}$ with $\Xi_s \in T_X \mM_s$, and $\rank(\Xi_{k-s}) \le k-s$. There exist a sequence $(Y_n) \subseteq \mM_s$ and a sequence $(a_n) \subseteq \R^+$ such that $Y_n \to X$, and $a_n(Y_n - X) = \Xi_s$. One can assume $a_n \to \infty$. Then $X_n^{} = Y_n^{} + a_n^{-1} \Xi_{k-s}$ is a sequence in $\mM_{\le k}$ which converges to $X$, and $a_n(X_n - X)$ converges to $\Xi$, which proves $\Xi \in T_X \mM_{\le k}$.

To prove the reverse inclusion ``$\subseteq$'', assume $\Xi = \lim_{n \to \infty} a_n (X - X_n)$, $X_n \to X$ in $\mM_{\le k}$, and $(a_n) \subseteq \R^+$. In the orthogonal decomposition
\[
a_n(X_n - X) = \Pi_{T_X \mM_s} a_n(X_n - X) + (\Pi_{\mU^\bot} \otimes \Pi_{\mV^\bot})a_n X_n,
\]
both terms have to converge separately. Denote their limits by $\Xi_s$ and $\Xi_{k-s}$, respectively. Then obviously $\Xi = \Xi_s + \Xi_{k-s}$ with $\Xi_s \in T_X \mM_s$ and $\Xi_{k-s} \in \mU^\bot \otimes \mV^\bot$. Since $(\Pi_\mU \otimes \Pi_\mV)X_n \to (\Pi_\mU \otimes \Pi_\mV)X = X$, and since the set of rank-$s$ matrices is relatively open in $\mU \otimes \mV$, $\rank((\Pi_\mU \otimes \Pi_\mV)X_n) = s$ for large enough $n$. Consequently, since $\rank(X_n) \le k$ for all $n$, it must hold that $\rank((\Pi_{\mU^\bot} \otimes \Pi_{\mV^\bot})a_n X_n) = \rank((\Pi_{\mU^\bot} \otimes \Pi_{\mV^\bot})X_n) \le k-s$ for large enough $n$. It follows from the semicontinuity of matrix rank that $\rank(\Xi_{k-s}) \le k-s$.
\qquad\end{proof}

\emph{Remark.} In the recent paper~\cite{CasonAbsilVanDooren2013} the tangent cones of $\mM_{\le k}$ have been previously derived, but in contrast to~\eqref{eq: definition of tangent cone} are defined via analytic curves as
\begin{equation}\label{eq: alternative definition of tnagent cone}
T_X \mM_{\le k} = \{ \dot{\gamma}(0) \vcentcolon \text{$\gamma$ is an analytic curve with  $\gamma(0) = X$ and $\gamma(t) \in \mM_{\le k}$ for $t \ge 0$} \}.
\end{equation}
As shown in~\cite[Proposition~2]{OSheaWilson2004}, both definitions are equivalent. Up to an additional normalization constraint, the authors of~\cite{CasonAbsilVanDooren2013} essentially prove Theorem~\ref{prop: form of tangent cone} using definition~\eqref{eq: alternative definition of tnagent cone}, which together with our proof provides a direct verification that both definitions are equivalent. As mentioned in~\cite{CasonAbsilVanDooren2013}, when using definition~\eqref{eq: alternative definition of tnagent cone}, the ``$\subseteq$'' part in Theorem~\ref{prop: form of tangent cone} follows from known results on the existence of analytic ``singular value decomposition paths''~\cite{BunseGerstneretal1991}. We can easily modify our argument above to prove the ``$\supseteq$'' part for~\eqref{eq: alternative definition of tnagent cone} by choosing an analytic curve $\gamma_s$ in $\mM_s$ (possible by Lemma~\ref{lem: M_s real-analytic submanifold}) such that $\Xi_s = \dot{\gamma}_s(0)$, and put $\gamma(t) = \
\gamma_s(t) + t \Xi_{k-s}$, which is an analytic curve in $\mM_{
\le k}$ with $\dot{\gamma}(0) = \Xi = \Xi_s + \Xi_{k-s}$. The proof of ``$\supseteq$'' given in~\cite{CasonAbsilVanDooren2013} seems more involved than is probably necessary, since the well-known structure of $T_X \mM_s$ is not exploited.

\emph{Remark.} In~\cite{Luke2013}, formulas for normal cones of $\mM_{\le k}$ have been derived. They do not imply the formula for the tangent cone in singular points $X$ with $s<k$. The reverse, however, is true. In view of~\eqref{eq: optimality condition}, we can rephrase Corollary~\ref{cor: deeper insight} below by stating that the regular normal cone at such $X$ contains only zero. This then implies that the general normal cone~\cite[Definition~6.3]{RockafellarWets1998} at $X$ is the union of all limits of subspaces $(T_{X_n} \mM_{\le k})^\bot = \mU_n^\bot \otimes \mV_n^\bot$ with $X_n \to X$ and $\rank(X_n) = k$. Consequently, the singular points of $\mM_{\le k}$ are also not regular in the sense of Clarke~\cite[Definition~6.4]{RockafellarWets1998}.

Now that we know the structure of the tangent cone in rank-deficient points, we can calculate the projection of the antigradient on it. This turns out to be easy. Moreover, the tangent cone in such points is so ``large'' that the projection on it carries over astonishingly much information. In fact, it generates all of $\R^{m \times n}$.

\begin{corollary}\label{cor: projection of negative gradient on tangent cone}
Let $X \in \mM_{\le k}$, $\rank(X) = s$. Any $G \in T_X \mM_{\le k}$ satisfying $\| - \nabla f(X) - G\|_\frob = \dist_\frob(-\nabla f(X),T_X \mM_{\le k})$ has the form
\begin{equation}\label{eq: form of G}
G = \Pi_{T_X \mM_s} (- \nabla f(X)) + \Xi_{k-s},
\end{equation}
where $\Xi_{k-s}$ is a best rank-$(k-s)$ approximation of $(\Pi_{\mU^\bot} \otimes \Pi_{\mV^\bot})( - \nabla f(X)) = -\nabla f(X) - \Pi_{T_X \mM_s}(- \nabla f(X))$ in the Frobenius norm. (Obviously, $\Xi_{k-s} \in \mU^\bot \otimes \mV^\bot$ then.) Moreover,
\begin{equation}\label{eq: lower estimate for projected gradient}
g^-(X) = \| G \|_\frob \ge \sqrt{\frac{k-s}{\min(m-s,n-s)}}\| \nabla f(X) \|_\frob.
\end{equation}
\end{corollary}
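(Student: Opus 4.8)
The plan is to use the orthogonal decomposition $\R^m\otimes\R^n = T_X\mM_s \oplus(\mU^\bot\otimes\mV^\bot)$ from Theorem~\ref{th: smoothness of full-rank manifolds}, together with the structure of the tangent cone from Theorem~\ref{prop: form of tangent cone}. Since $T_X\mM_{\le k} = T_X\mM_s \oplus\{\Xi_{k-s}\in\mU^\bot\otimes\mV^\bot : \rank(\Xi_{k-s})\le k-s\}$ and the two summand subspaces $T_X\mM_s$ and $\mU^\bot\otimes\mV^\bot$ are orthogonal, best approximation of $-\nabla f(X)$ in $T_X\mM_{\le k}$ decouples: one projects the $T_X\mM_s$-component orthogonally (giving $\Pi_{T_X\mM_s}(-\nabla f(X))$), and one best-approximates the remaining component $(\Pi_{\mU^\bot}\otimes\Pi_{\mV^\bot})(-\nabla f(X))$ by a matrix of rank at most $k-s$ inside $\mU^\bot\otimes\mV^\bot$. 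This immediately gives the form~\eqref{eq: form of G} and, by Pythagoras (equation~\eqref{eq: Pythagoras on cone} applied blockwise), the identity $g^-(X)^2 = \|\Pi_{T_X\mM_s}(-\nabla f(X))\|_\frob^2 + \|\Xi_{k-s}\|_\frob^2$, where $\Xi_{k-s}$ is the stated best rank-$(k-s)$ approximation.

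For the lower bound~\eqref{eq: lower estimate for projected gradient}, write $N = -\nabla f(X)$, split $N = N_1 + N_2$ with $N_1 = \Pi_{T_X\mM_s}N$ and $N_2 = (\Pi_{\mU^\bot}\otimes\Pi_{\mV^\bot})N$, so $\|N\|_\frob^2 = \|N_1\|_\frob^2 + \|N_2\|_\frob^2$. Since $g^-(X)^2 = \|N_1\|_\frob^2 + \|\Xi_{k-s}\|_\frob^2$ and $\|N_1\|_\frob^2 \ge \tfrac{k-s}{\min(m-s,n-s)}\|N_1\|_\frob^2$ trivially (the fraction is at most $1$), it suffices to show $\|\Xi_{k-s}\|_\frob^2 \ge \tfrac{k-s}{\min(m-s,n-s)}\|N_2\|_\frob^2$. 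Here $N_2$ is viewed as a matrix acting from $\mV^\bot$ (dimension $n-s$) to $\mU^\bot$ (dimension $m-s$), hence has at most $r := \min(m-s,n-s)$ nonzero singular values $\sigma_1\ge\cdots\ge\sigma_r\ge 0$. Its best rank-$(k-s)$ approximation keeps the $k-s$ largest, so by the Eckart--Young theorem $\|\Xi_{k-s}\|_\frob^2 = \sigma_1^2 + \cdots + \sigma_{k-s}^2$ while $\|N_2\|_\frob^2 = \sigma_1^2 + \cdots + \sigma_r^2$. Since the $\sigma_i^2$ are nonincreasing, the average of the top $k-s$ of them is at least the average of all $r$ of them, i.e. $\tfrac{1}{k-s}\sum_{i=1}^{k-s}\sigma_i^2 \ge \tfrac{1}{r}\sum_{i=1}^{r}\sigma_i^2$, which rearranges to exactly $\|\Xi_{k-s}\|_\frob^2 \ge \tfrac{k-s}{r}\|N_2\|_\frob^2$. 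Adding the two estimates and using $\|N\|_\frob^2 = \|N_1\|_\frob^2 + \|N_2\|_\frob^2$ gives $g^-(X)^2 \ge \tfrac{k-s}{\min(m-s,n-s)}\|\nabla f(X)\|_\frob^2$, and taking square roots yields the claim.

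I do not expect any serious obstacle here; the only point that deserves a line of care is justifying that best approximation in the (nonconvex) cone $T_X\mM_{\le k}$ really splits into the two independent subproblems on the orthogonal summands — this follows because for any $G = G_1 + \Xi_{k-s}$ in the cone (with $G_1\in T_X\mM_s$, $\rank\Xi_{k-s}\le k-s$) one has $\|N - G\|_\frob^2 = \|N_1 - G_1\|_\frob^2 + \|N_2 - \Xi_{k-s}\|_\frob^2$ by orthogonality, so each term is minimized separately, the first by $G_1 = N_1$ and the second (by Eckart--Young within $\mU^\bot\otimes\mV^\bot$) by a best rank-$(k-s)$ approximation of $N_2$. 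One should also note that $N_2 = N - N_1 = -\nabla f(X) - \Pi_{T_X\mM_s}(-\nabla f(X))$, which matches the formula stated in the corollary. The rest is the elementary averaging inequality on singular values described above.
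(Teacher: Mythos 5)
Your proof is correct and follows essentially the same route as the paper: the orthogonal splitting $\R^m\otimes\R^n = T_X\mM_s\oplus(\mU^\bot\otimes\mV^\bot)$ decouples the best approximation in the cone, and the lower bound comes from the same averaging argument on the at most $\min(m-s,n-s)$ squared singular values of the $(\Pi_{\mU^\bot}\otimes\Pi_{\mV^\bot})$-component, combined with the trivial estimate on the $T_X\mM_s$-component. The only difference is that you spell out the decoupling step, which the paper dismisses as ``clear by orthogonality considerations''; your extra care there is harmless and arguably welcome.
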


\begin{proof}
The form of $G$ is clear from Theorem~\ref{prop: form of tangent cone} by orthogonality considerations. We prove the norm estimate. The square of the Frobenius norm of a matrix is the sum of its squared singular values. A best rank-$(k-s)$ approximation of a matrix in the Frobenius norm is obtained by truncating its singular value decomposition up to the largest $k-s$ singular values. As $\dim(\mU^\bot) = m-s$ and $\dim(\mV^\bot) = n-s$, the matrix $(\Pi_{\mU^\bot} \otimes \Pi_{\mV^\bot})(- \nabla f(X))$ has at most $\min(m-s,n-s)$ nonzero singular values.
We conclude that
\[
\| \Xi_{k-s} \|_\frob^2 \ge  \frac{k-s}{\min(m-s,n-s)} \| (\Pi_{\mU^\bot} \otimes \Pi_{\mV^\bot}) \nabla f(X) \|_\frob^2.
\]
Since $\Xi_{k-s} \in \mU^\bot \otimes \mV^\bot$,~\eqref{eq: form of G} and~\eqref{eq: orthogonal decomposition} now show that
\begin{align*}
\| G \|_\frob^2 &= \| \Pi_{T_X \mM_s} (- \nabla f(X)) \|_\frob^2 + \| \Xi_{k-s}\|_\frob^2 \\
&\ge \frac{k-s}{\min(m-s,n-s)} ( \| \Pi_{T_X \mM_s} (- \nabla f(X)) \|_\frob^2 + \| (\Pi_{\mU^\bot} \otimes \Pi_{\mV^\bot}) (-\nabla f(X)) \|_\frob^2)\\
&= \frac{k-s}{\min(m-s,n-s)} \| -\nabla f(X) \|_\frob^2, 
\end{align*}
as asserted.
\qquad\end{proof}

The estimate~\eqref{eq: lower estimate for projected gradient} allows us to make a remarkable a priori statement about critical points of differentiable functions on $\mM_{\le k}$.

\begin{corollary}\label{cor: deeper insight}
Let $k \le \min(m,n)$, and let $X^* \in \mM_{\le k}$ be a critical point of~\eqref{eq: low-rank min problem} in the sense $g^-(X^*) = 0$. Then either $\rank(X^*) = k$ or $\nabla f(X^*) = 0$.
\end{corollary}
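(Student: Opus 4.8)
The plan is to read the conclusion directly off the lower bound \eqref{eq: lower estimate for projected gradient} established in Corollary~\ref{cor: projection of negative gradient on tangent cone}. Suppose $X^*\in\mM_{\le k}$ satisfies $g^-(X^*)=0$, and let $s=\rank(X^*)\le k$. There are only two cases: either $s=k$, in which case we are done, or $s<k$.

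In the remaining case $s<k$, the factor $\tfrac{k-s}{\min(m-s,n-s)}$ appearing in \eqref{eq: lower estimate for projected gradient} is strictly positive, since $k-s\ge 1$ and $\min(m-s,n-s)>0$ (as $s\le k\le\min(m,n)$ forces $m-s\ge 0$ and $n-s\ge 0$, and in fact $m-s\ge m-k\ge 0$; note if $s=m$ then $s=k=m=n$, contradicting $s<k$, so $\min(m-s,n-s)\ge 1$). Therefore \eqref{eq: lower estimate for projected gradient} gives
\[
0 = g^-(X^*) = \|G\|_\frob \ge \sqrt{\frac{k-s}{\min(m-s,n-s)}}\,\|\nabla f(X^*)\|_\frob \ge 0,
\]
which forces $\|\nabla f(X^*)\|_\frob = 0$, i.e. $\nabla f(X^*)=0$.

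Combining the two cases, either $\rank(X^*)=k$ or $\nabla f(X^*)=0$, as claimed. I do not anticipate a genuine obstacle here: the entire content has been front-loaded into Corollary~\ref{cor: projection of negative gradient on tangent cone}, and the only point deserving a word of care is checking that the coefficient under the square root is strictly positive in the subcase $s<k$ (so that vanishing of $g^-$ really does propagate to vanishing of the full ambient gradient), which is immediate from $k\le\min(m,n)$.
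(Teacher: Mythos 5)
Your proof is correct and follows exactly the route the paper intends: Corollary~\ref{cor: deeper insight} is stated as an immediate consequence of the lower bound~\eqref{eq: lower estimate for projected gradient}, and your case distinction $s=k$ versus $s<k$, together with the observation that $s<k\le\min(m,n)$ makes the factor $\sqrt{(k-s)/\min(m-s,n-s)}$ strictly positive, is precisely the (implicit) argument in the paper.
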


\changed{Conceptually similar statements have been made in~\cite[Proposition 4]{BurerMonteiro2003} and~\cite[Theorem 7]{Journeeetal2010} for optimization tasks on the set of positive semidefinite matrices.} As an illustration consider the following.

\begin{corollary}
Let $k \le \min(m,n)$. Assume that $f \vcentcolon \R^{m \times n} \to \R$ is strictly convex and coercive and its unique minimizer on $\R^{m \times n}$ has rank larger than or equal to $k$. Then any relative local minimizer of $f$ on $\mM_{\le k}$ has rank $k$.
\end{corollary}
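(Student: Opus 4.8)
The statement is an immediate consequence of Corollary~\ref{cor: deeper insight} combined with the elementary fact that a differentiable strictly convex function on a Euclidean space has exactly one stationary point, namely its global minimizer. The plan is therefore short: reduce to the dichotomy provided by Corollary~\ref{cor: deeper insight}, then rule out the second alternative using convexity.

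First I would let $X^*$ be a relative local minimizer of $f$ on $\mM_{\le k}$ (if none exists the claim is vacuous; note that coercivity and continuity of $f$ together with the closedness of $\mM_{\le k}$ do guarantee the existence of a global minimizer of~\eqref{eq: low-rank min problem}). By the first-order optimality condition recalled in section~\ref{sec: optimality conditions}, $X^*$ is a critical point of~\eqref{eq: low-rank min problem}, i.e. $g^-(X^*) = 0$. Corollary~\ref{cor: deeper insight} then yields the dichotomy: either $\rank(X^*) = k$, in which case we are already done, or $\nabla f(X^*) = 0$.

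It remains to treat the second alternative, and here I would simply invoke convexity. Since $f$ is differentiable (by the standing assumption~\eqref{A0}) and convex, $\nabla f(X^*) = 0$ implies $f(Y) \ge f(X^*) + \langle \nabla f(X^*), Y - X^* \rangle_\frob = f(X^*)$ for every $Y \in \R^{m \times n}$, so that $X^*$ is a global minimizer of $f$ on the whole space; by strict convexity it is \emph{the} unique such minimizer. By hypothesis this minimizer has rank at least $k$, while $X^* \in \mM_{\le k}$ has rank at most $k$; hence $\rank(X^*) = k$ in this case as well. Combining the two cases gives the assertion.

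I do not expect any genuine obstacle: the proof is a one-line deduction from Corollary~\ref{cor: deeper insight}, whose estimate~\eqref{eq: lower estimate for projected gradient} carries the entire conceptual weight (a rank-deficient critical point on $\mM_{\le k}$ is necessarily an unconstrained critical point). The only points requiring a modicum of care are that $f$ is differentiable, so that the convexity (subgradient) inequality may be written with the ordinary gradient, and that coercivity together with strict convexity are precisely what make the phrase ``its unique minimizer on $\R^{m \times n}$'' well posed.
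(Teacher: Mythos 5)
Your proof is correct and follows exactly the route the paper intends: the corollary is stated as an immediate illustration of Corollary~\ref{cor: deeper insight}, and your two-case argument (either $\rank(X^*)=k$ directly, or $\nabla f(X^*)=0$ forces $X^*$ to be the unique unconstrained minimizer, whose rank is at least $k$ while membership in $\mM_{\le k}$ caps it at $k$) is the intended one-line deduction. No gaps.
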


In light of these results, it is not surprising that we will have to make the assumption $\rank(X^*) = k$ in our convergence results below in order to conclude $g^-(X^*) = 0$. It is not an artifact of the used techniques. Instead, Corollary~\ref{cor: deeper insight} tells us that it will be normally impossible to find a rank-deficient critical point by a projected gradient method that most of the time moves on $\mM_k$, since on $\mM_{k}$ the projection of the antigradient contains much less information. 

We finish with a practical remark. When the matrices are large, one will only be able to work with sparse or low-rank representations of all involved quantities. In particular, $\nabla f(X)$ needs to allow for a sparse or a low-rank representation. If $s \ll \min(m,n)$, the calculation of $\Pi_{\mU^\bot} \otimes \Pi_{\mV^\bot}(-\nabla f(X)) = -\nabla f(X) - \Pi_{T_X \mM_s}(- \nabla f(X))$ is then feasible using the second representation of $\Pi_{T_X \mM_s}$ in~\eqref{eq: projector on tangent space}. With some effort one can even exploit the low-rank structure of $\Pi_{T_X \mM_s}(- \nabla f(X))$ to calculate an approximate singular value decomposition of the difference without explicitly assembling it. The huge projector $\Pi_{\mU^\bot} \otimes \Pi_{\mV^\bot}$ should never be formed. The final rank of tangent vectors itself is not larger than $2s + (k-s) = k+s$, which can be seen from the decomposition. We summarize the procedure as Algorithm~\ref{Alg: Calculate gradient}.

\begin{algorithm}
\KwIn{Antigradient $F = - \nabla f(X)$ at $X \in \mM_{\le k}$.}
\KwOut{Projection $G \in T_X \mM_{\le k}$ with $\| F - G\|_\frob = \dist_\frob(F,T_X \mM_{\le k})$.}
Find orthonormal bases $U$ and $V$ for $\ran(X)$ and $\ran(X^\trans)$, respectively\;
Calculate the projection on $T_X \mM_s$:
\[
	\Xi_s = UU^\trans F + F VV^\trans - UU^\trans F VV^\trans;
\]
\nl Calculate best rank-$(k-s)$ approximation of the difference:
\[
\Xi_{k-s} \in \argmin_{\rank(Y) \le k-s} \| F - \Xi_s - Y\|_\frob;
\]
\nl Output:
\[
G = \Xi_s + \Xi_{k-s}, \quad g^-(X) = \| G \|_\frob = \sqrt{\|\Xi_s\|_\frob^2 + \|\Xi_{k-s}\|_\frob^2}.
\]
\caption{Calculate the projection of $- \nabla f(X)$ on $T_X \mM_{\le k}$}\label{Alg: Calculate gradient}
\end{algorithm}

\subsection{Retraction by best low-rank approximation}\label{sec: stable retraction for matrices}

As a retraction we choose the best approximation by a matrix of rank at most $k$ in the Frobenius norm, i.e.,
\begin{equation}\label{eq: best approximation retraction}
R(X, \Xi_X) \in \argmin_{Y \in \mM_{\le k}} \|Y - (X + \Xi_X)\|_\frob. 
\end{equation}
It can be explicitly calculated using singular value decomposition. In unlikely events,~\eqref{eq: best approximation retraction} is set-valued, but we can assume that a specific choice is made by fixing deterministic singular value decomposition and truncation algorithms. The particular choice does not matter. We emphasize once more that Definition~\ref{def: retraction} is indeed fulfilled: let $\Xi \in T_X \mM_{\le k}$; then by~\cite[Proposition~2]{OSheaWilson2004} there exists an analytic arc $\gamma \vcentcolon [0,\epsilon) \to \mM_{\le k}$ such that $\dot{\gamma}(0) = \Xi$. Hence,
\begin{equation}\label{eq: valid retraction on variety}
\lim_{\alpha \to 0^+} \frac{\| R(X, \alpha \Xi_X) - (X + \Xi_X) \|_\frob}{\alpha} \le \lim_{\alpha \to 0^+} \frac{\| \gamma(\alpha) - (X + \dot{\gamma}(0)) \|_\frob}{\alpha} = 0.
\end{equation}

We have the following nice estimate, which provides $M = 1 + 2^{-1/2}$ in~\eqref{eq: upper bound for retraction}.

\begin{proposition}\label{prop: stable retraction}
The above retraction satisfies
\[
\| R(X, \Xi_X) - (X + \Xi_X) \|_\frob \le \frac{1}{\sqrt{2}} \| \Xi_X \|_\frob \quad \text{for all $X \in \mM_{\le k}$ and $\Xi_X \in T_X \mM_{\le k}$.}
\]
\end{proposition}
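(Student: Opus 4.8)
The plan is to decompose the target vector $X+\Xi_X$ using the orthogonal decomposition of $\R^m\otimes\R^n$ afforded by Theorem~\ref{prop: form of tangent cone}, and then compare the best rank-$k$ approximation error against an explicit (suboptimal) rank-$k$ competitor. Write $X$ of rank $s\le k$, set $\mU=\ran(X)$, $\mV=\ran(X^\trans)$, and split $\Xi_X=\Xi_s+\Xi_{k-s}$ with $\Xi_s\in T_X\mM_s$ and $\Xi_{k-s}\in\mU^\bot\otimes\mV^\bot$ of rank at most $k-s$, as in Theorem~\ref{prop: form of tangent cone}. The point $X+\Xi_s$ lies in $\mU\otimes\R^n + \R^m\otimes\mV$, hence factors through at most the $s$-dimensional subspaces $\mU$ and $\mV$ on the respective sides; more precisely, writing $P=\Pi_\mU$, $Q=\Pi_\mV$, one has $X+\Xi_s = (P\otimes I + I\otimes Q - P\otimes Q)(X+\Xi_s)$, a matrix supported on the ``cross'' pattern, whose part outside $\mU\otimes\mV$ contributes to the rank but the whole thing still has rank at most... here I must be slightly careful: $X+\Xi_s$ need not have rank $\le s$. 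The key observation is instead the following.

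The natural rank-$k$ competitor is $Y := \Pi_{T_X\mM_s}(X+\Xi_X) + \Xi_{k-s}$; but $\Pi_{T_X\mM_s}(X+\Xi_X)=X+\Xi_s$ and this need not be low rank. So instead I would take the competitor to be a best rank-$s$ approximation of the ``cross'' block $X+\Xi_s$ plus $\Xi_{k-s}$. The cleanest route: decompose the ambient space as $(\mU\otimes\mV)\oplus(\mU^\bot\otimes\mV)\oplus(\mU\otimes\mV^\bot)\oplus(\mU^\bot\otimes\mV^\bot)$. Then $X+\Xi_X$ has four blocks; the fourth block is exactly $\Xi_{k-s}$, of rank $\le k-s$. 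The first three blocks together form a matrix $B$ whose column space lies in $\mU + (\text{something in }\mU^\bot)$ but whose row-or-column rank I control by noting that $B = (P\otimes I + I\otimes Q - P\otimes Q)(X+\Xi_X)$ lives in $(\mU\otimes\R^n)+(\R^m\otimes\mV)$, a set of matrices of rank at most $2s$. Hmm — that gives a competitor of rank $2s+(k-s)=k+s$, too large.

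So the actual argument must be the sharper one used for low-rank truncation errors: pick the competitor $Y=B_s+\Xi_{k-s}$ where $B_s$ is a best rank-$s$ approximation of the block $B$ above; since $B$ has rank $\le 2s$, by Eckart--Young $\|B-B_s\|_\frob^2$ equals the sum of the $s$ smallest of at most $2s$ squared singular values of $B$, hence is at most $\frac12\|B\|_\frob^2$. Now $Y$ has rank $\le s+(k-s)=k$, so $\|R(X,\Xi_X)-(X+\Xi_X)\|_\frob\le\|Y-(X+\Xi_X)\|_\frob=\|B-B_s\|_\frob\le\frac1{\sqrt2}\|B\|_\frob\le\frac1{\sqrt2}\|X+\Xi_X\|_\frob$. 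This is \emph{not} the claimed bound: the claim has $\|\Xi_X\|_\frob$ on the right, not $\|X+\Xi_X\|_\frob$. The fix is to subtract off $X$ first: observe $X\in\mU\otimes\mV$ has rank $s$ and is itself a rank-$s$ matrix inside the block $B$, so the best rank-$s$ approximation of $B$ is at least as good as $X$, giving $\|B-B_s\|_\frob\le\|B-X\|_\frob=\|(P\otimes I+I\otimes Q-P\otimes Q)\Xi_X\|_\frob\le\|\Xi_X\|_\frob$ — that only recovers $M=1$, the generic variety bound, not $1+2^{-1/2}$; conversely the Eckart--Young halving applied to $B-X$ (which also has rank $\le 2s$, in fact the cross part of $\Xi_X$) gives $\|B-B_s\|_\frob\le\frac1{\sqrt2}\|B-X\|_\frob\le\frac1{\sqrt2}\|\Xi_X\|_\frob$, i.e. exactly the factor $2^{-1/2}$. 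So the final competitor should be $Y=X+(\text{best rank-}(s)\text{ approx of }B-X\text{ added to }X)$... I would organize it as: let $C=(P\otimes I+I\otimes Q-P\otimes Q)\Xi_X$ (rank $\le 2s$), let $C_s$ be its best rank-$s$ truncation, and take $Y=X+C_s+\Xi_{k-s}$, of rank $\le k$; then $\|R(X,\Xi_X)-(X+\Xi_X)\|_\frob\le\|Y-(X+\Xi_X)\|_\frob=\|C-C_s\|_\frob$, which by Eckart--Young is the square root of the sum of the $s$ smallest among $\le 2s$ squared singular values of $C$, hence $\le\frac1{\sqrt2}\|C\|_\frob\le\frac1{\sqrt2}\|\Xi_X\|_\frob$.

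\textbf{Main obstacle.} The crux is the rank bookkeeping: verifying that $C=(P\otimes I+I\otimes Q-P\otimes Q)\Xi_X$ has rank at most $2s$ (it is a sum of a matrix with column space in $\mU$ and one with row space in $\mV$), that $X+C_s$ still has rank at most $s$ (because $X$ and $C_s$ share the support pattern of the cross, and truncating to rank $s$ inside a $2s$-dimensional singular subspace keeps us at rank $s$ — actually one must check $X+C_s$ can be forced to rank $\le s$; the safe statement is only rank $\le 2s$, so to get a rank-$\le k$ competitor when $s$ could equal $k$ one needs $X+C_s$ to have rank $\le s$, which holds because $C_s$ can be chosen with range and corange inside those of $C$ and then $X+C_s$ lives in $\mU\otimes\mV+\ldots$ — this needs a careful argument, possibly choosing $C_s$ as a minimizer over rank-$s$ matrices \emph{with support in the cross}), and that $\Xi_{k-s}\perp X+C_s$ so ranks add. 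Once the decomposition $\|\Xi_X\|_\frob^2=\|C\|_\frob^2+\|\Xi_{k-s}\|_\frob^2$ and the rank accounting are pinned down, the Eckart--Young ``at most half the energy is in the smallest $s$ of $2s$ singular values'' estimate finishes it immediately. I would present the argument cleanly by reducing to the scalar fact: if a matrix has rank $\le 2s$ then its best rank-$s$ approximation error in Frobenius norm is at most $1/\sqrt2$ times its norm.
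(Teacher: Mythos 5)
Your overall strategy --- exhibit an explicit rank-$\le k$ competitor for $X+\Xi_X$ and bound its distance from $X+\Xi_X$ by $\tfrac{1}{\sqrt2}\|\Xi_X\|_\frob$ --- is the right one, and you have correctly located the difficulty; but the step you yourself flag as the ``main obstacle'' is a genuine gap that your proposal does not close, and the Eckart--Young mechanism you rely on is not the one that actually produces the factor $\tfrac1{\sqrt2}$. With $C=\Pi_{T_X\mM_s}\Xi_X$ and $C_s$ a best rank-$s$ truncation of $C$, nothing forces the column or row space of $C_s$ to be compatible with that of $X$, so $X+C_s$ can genuinely have rank up to $2s$ and the competitor $X+C_s+\Xi_{k-s}$ rank up to $k+s$. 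Your suggested repair --- minimizing over rank-$s$ matrices supported in the cross $(\mU\otimes\mV)\oplus(\mU^\bot\otimes\mV)\oplus(\mU\otimes\mV^\bot)$ --- is still insufficient: for $s=2$, $\mU=\mV=\spann(e_1,e_2)$, $X=e_1e_1^\trans+e_2e_2^\trans$, the matrix $Z=e_1e_3^\trans+e_3e_1^\trans$ has rank $2$ and is cross-supported, yet $X+Z$ has rank $3$. So ``rank $\le s$ and cross-supported'' does not yield the rank bookkeeping you need; the constraint that works is one-sidedness of the range.

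The paper's proof avoids Eckart--Young entirely and uses two structured candidates: $X+(\Pi_\mU\otimes I)\Xi_X+\Xi_{k-s}$ and $X+(I\otimes\Pi_\mV)\Xi_X+\Xi_{k-s}$. Since $X+(\Pi_\mU\otimes I)\Xi_X=(\Pi_\mU\otimes I)(X+\Xi_X)$ has column space in $\mU$, it has rank $\le s$ automatically, and adding $\Xi_{k-s}\in\mU^\bot\otimes\mV^\bot$ of rank $\le k-s$ gives total rank $\le k$; symmetrically for the second candidate. Their residuals against $X+\Xi_X$ are exactly the two mutually orthogonal blocks $(\Pi_{\mU^\bot}\otimes\Pi_\mV)\Xi_X$ and $(\Pi_\mU\otimes\Pi_{\mV^\bot})\Xi_X$ of $\Xi_X$, so the smaller of the two squared norms is at most half their sum, hence at most $\tfrac12\|\Xi_X\|_\frob^2$. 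In other words, the factor $\tfrac1{\sqrt2}$ comes from a pigeonhole argument on two orthogonal blocks of $\Xi_X$, not from discarding the $s$ smallest of $2s$ singular values. Replacing your ``best rank-$s$ truncation of $C$'' by ``whichever of $(\Pi_\mU\otimes I)\Xi_X$, $(I\otimes\Pi_\mV)\Xi_X$ leaves the smaller residual'' turns your outline into a complete proof.
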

\begin{proof}
The matrices
\(
X + (\Pi_\mU \otimes I)\Xi_X = (\Pi_\mU \otimes I)(X + \Xi_X)
\)
and
\(
X + (I \otimes \Pi_\mV)\Xi_X = (I \otimes \Pi_\mV)(X + \Xi_X)
\)
both have rank at most $s$. Thus, by Theorem~\ref{prop: form of tangent cone},
\[
X + (\Pi_\mU \otimes I)\Xi_X + (\Pi_{\mU^\bot} \otimes \Pi_{\mV^\bot})\Xi_X
\]
and
\[
X + (I \otimes \Pi_\mV)\Xi_X + (\Pi_{\mU^\bot} \otimes \Pi_{\mV^\bot})\Xi_X
\]
both have rank not larger than $k$. Considering them as possible candidates for a best approximation $R(X, \Xi_X)$ of $X + \Xi_X$ by a matrix of rank at most $k$, we obtain the desired bound
\[
\| R(X + \Xi_X) - (X + \Xi_X) \|_\frob^2 \le \min(\|(\Pi_{\mU^\bot} \otimes \Pi_\mV)\Xi_X \|_\frob^2, \|(\Pi_\mU \otimes \Pi_{\mV^\bot})\Xi_X\|_\frob^2)  \le \frac{1}{2} \| \Xi_X \|_\frob^2,
\]
where we have made use of the orthogonal decompositions~\eqref{eq: tangent space Ms} and~\eqref{eq: orthogonal decomposition}.
\qquad\end{proof}

We conclude that $\| R(X, \Xi_X) - X \|_\frob^2 \to 0$ automatically implies $\Xi_X \to 0$.

\subsection{\L{}ojasiewicz inequality and convergence result}\label{sec: lojasiewicz for matrices}

To apply the convergence results of section~\ref{eq: relation to line-search}, we will show that the \L{}ojasiewicz gradient inequality~\eqref{L} holds for real-analytic functions in every point of $\mM_{\le k}$. The aim is to apply Proposition~\ref{prop: Lojasiewicz for real-analytic manifolds}. \changed{
As a first step, the following lemma implies that the smooth submanifolds $\mM_s$ are indeed real-analytic submanifolds in the sense of~\cite[Definition 2.7.1]{KrantzParks2002}.

\begin{lemma}\label{lem: M_s real-analytic submanifold}
Let $0 < s \le \min(m,n)$. The (componentwise) real-analytic map
\[
(U,V) \mapsto UV^\trans
\]
is a submersion from the open subset $\{(U,V) \in \R^{m \times s} \times \R^{n \times s} \vcentcolon \rank(U) = \rank(V) = s \}$ of $\R^{m \times s} \times \R^{n \times s}$ onto $\mM_s$.
\end{lemma}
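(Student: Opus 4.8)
The plan is to verify directly that the map $\pi\colon (U,V) \mapsto UV^\trans$ is a well-defined smooth map onto $\mM_s$ whose differential is surjective at every point of the indicated open set $\mO := \{(U,V) : \rank(U) = \rank(V) = s\}$. That $\pi$ is componentwise real-analytic (indeed polynomial) is immediate, and that $\mO$ is open follows because $\rank$ is lower semicontinuous, so the full-rank condition is an open condition. That $\pi(\mO) \subseteq \mM_s$ is clear: if $U,V$ both have rank $s$, then $\ran(UV^\trans) = \ran(U)$ has dimension $s$, so $\rank(UV^\trans) = s$. Surjectivity onto $\mM_s$ is also standard: any $X$ of rank $s$ has a full-rank factorization $X = U V^\trans$ (e.g.\ take $U$ from the first $s$ columns of a rank-revealing factorization and absorb the remaining factor into $V$).

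The main content is the submersion property, i.e.\ that $d\pi_{(U,V)}$ is surjective onto $T_{UV^\trans}\mM_s$ for every $(U,V) \in \mO$. I would compute the differential directly: for directions $(\dot U, \dot V) \in \R^{m\times s}\times\R^{n\times s}$,
\[
d\pi_{(U,V)}(\dot U, \dot V) = \dot U V^\trans + U \dot V^\trans .
\]
Writing $\mU = \ran(U)$ and $\mV = \ran(V)$ (so that, since $U,V$ have rank $s$, $\mU = \ran(X)$ and $\mV = \ran(X^\trans)$ for $X = UV^\trans$), I would show the image of this map is exactly $T_X\mM_s = (\mU\otimes\mV)\oplus(\mU^\bot\otimes\mV)\oplus(\mU\otimes\mV^\bot)$ from Theorem~\ref{th: smoothness of full-rank manifolds}. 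The inclusion ``$\subseteq$'' is easy: $\dot U V^\trans \in \R^m\otimes\mV = (\mU\otimes\mV)\oplus(\mU^\bot\otimes\mV)$ and $U\dot V^\trans \in \mU\otimes\R^n = (\mU\otimes\mV)\oplus(\mU\otimes\mV^\bot)$, so the sum lies in $T_X\mM_s$. For ``$\supseteq$'', given an arbitrary element $\Xi_{11} + \Xi_{21} + \Xi_{12}$ with $\Xi_{11}\in\mU\otimes\mV$, $\Xi_{21}\in\mU^\bot\otimes\mV$, $\Xi_{12}\in\mU\otimes\mV^\bot$, I would produce explicit preimages: since $V$ has full column rank, $V^\trans$ has a right inverse $V^{+\trans}$ with $\ran\!$ behaving correctly, so choosing $\dot U$ with $\dot U V^\trans = \Xi_{11} + \Xi_{21}$ (solvable because $\Xi_{11}+\Xi_{21}$ has row space contained in $\mV$) and $\dot V$ with $U\dot V^\trans = \Xi_{12}$ (solvable because $\Xi_{12}$ has column space in $\mU$) does the job. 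Hence $d\pi_{(U,V)}$ maps onto $T_X\mM_s$, which has dimension $(m+n-s)s$ by Theorem~\ref{th: smoothness of full-rank manifolds}, so $\pi$ is a submersion at every point of $\mO$.

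The one point requiring slight care — and the place I'd expect to spend the most attention — is bookkeeping the tensor/subspace notation cleanly enough that the surjectivity argument is transparent: one must keep straight that ``row space of $\dot U V^\trans$ lies in $\mV$'' and ``column space of $U\dot V^\trans$ lies in $\mU$'', and that full column rank of $U$ (resp.\ $V$) is exactly what makes the corresponding linear systems solvable. Equivalently, in matrix-product terms, the map $\dot U \mapsto \dot U V^\trans$ is onto $\{Y : \ran(Y^\trans)\subseteq \mV\} = \R^m \otimes \mV$ precisely because $V$ has rank $s$, and symmetrically for $\dot V$. Once this is in place, combining with the definition of a submersion (and noting that a submersion onto a manifold of the correct dimension, together with surjectivity of $\pi$ itself, is precisely the structure in~\cite[Definition~2.7.1]{KrantzParks2002} needed for the remark that $\mM_s$ is a real-analytic submanifold) completes the proof. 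No curvature bounds or analytic-arc machinery is needed here — it is a direct differential computation.
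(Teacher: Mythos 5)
Your proof is correct and follows essentially the same route as the paper: differentiate the parametrization to get $(\dot U,\dot V)\mapsto \dot U V^\trans + U\dot V^\trans$ and exploit the full column rank of $U$ and $V$ together with the description of $T_X\mM_s$ from Theorem~\ref{th: smoothness of full-rank manifolds}. The only cosmetic difference is that you establish surjectivity onto $T_X\mM_s$ by constructing explicit preimages, whereas the paper checks that the differential has trivial kernel on the $(m+n-s)s$-dimensional subspace of pairs $(\delta U,\delta V)$ with $U^\trans\delta U = 0$ and concludes by a dimension count.
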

\begin{proof}
The openness of the domain of definition and the surjectivity are clear. The derivative at $(U,V)$ is the linear map $(\delta U, \delta V) \mapsto \delta U V^\trans + U \delta V^\trans$. As both $U$ and $V$ have rank $s$, one verifies that it has no nontrivial kernel in the $(m+n-s)s$-dimensional subspace of all $(\delta U, \delta V)$ satisfying $U^\trans \delta U = 0$. Hence the derivative has rank at least $(m+n-s)s = \dim(\mM_s)$ (see Theorem~\ref{th: smoothness of full-rank manifolds}), which already proves the claim. \qquad
\end{proof}
}

\begin{theorem}\label{th: Lojasiewicz for matrices}
Let $\mD \subseteq \R^{m \times n}$ be open, $\mM_{\le k} \subset \mD$, and $f \colon \mD \to \R$ be real-analytic. Then the \L{}ojasiewicz gradient inequality~\eqref{L} holds at any point $X \in \mM_{\le k}$.
\end{theorem}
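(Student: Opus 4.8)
The plan is to invoke Proposition~\ref{prop: Lojasiewicz for real-analytic manifolds} with a concrete real-analytic parametrization of a neighborhood of $X$ in $\mM_{\le k}$. Fix $X \in \mM_{\le k}$ with $\rank(X) = s \le k$, and set $\mU = \ran(X)$, $\mV = \ran(X^\trans)$. The natural candidate for the parametrizing map is $\tau(U,V) = UV^\trans$ on (an open subset of) $\R^{m \times k} \times \R^{n \times k}$, which is componentwise real-analytic and whose image is exactly $\mM_{\le k}$. The base point $t_0 = (U_0, V_0)$ is chosen so that $U_0 V_0^\trans = X$, e.g. from a rank-$s$ factorization of $X$ padded with zero columns. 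Condition~(i) of Proposition~\ref{prop: Lojasiewicz for real-analytic manifolds} is then immediate, so the entire content of the proof reduces to verifying the surjectivity/openness condition~(ii): the image under $\tau$ of every open neighborhood of $(U_0,V_0)$ contains a relatively open neighborhood of $X$ in $\mM_{\le k}$.

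To check~(ii) I would argue as follows. Any $Y \in \mM_{\le k}$ close to $X$ has $\rank(Y) = r$ for some $s \le r \le k$ (rank can only jump up near $X$ by lower semicontinuity, and stays $\le k$). Write $Y = PQ^\trans$ with $P \in \R^{m \times r}$, $Q \in \R^{n \times r}$ of full column rank $r$; pad with zero columns to get $\tilde P \in \R^{m \times k}$, $\tilde Q \in \R^{n \times k}$ so that $\tilde P \tilde Q^\trans = Y$ still. The point is to show that $(\tilde P, \tilde Q)$ can be chosen close to $(U_0, V_0)$ when $Y$ is close to $X$. Here one uses that $X$ itself has the factorization $(U_0, V_0)$ with the first $s$ columns forming a genuine rank-$s$ pair and the remaining $k-s$ columns zero; a perturbation argument (or an appeal to continuity of a suitable QR/SVD-based factorization on the relevant stratum) shows that $Y$ near $X$ admits a factor pair within any prescribed distance of $(U_0, V_0)$. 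Concretely, one can take the first $s$ columns of $\tilde P, \tilde Q$ from an SVD-type factorization of the dominant rank-$s$ part of $Y$ (which depends continuously on $Y$ near $X$ since the $s$-th singular value of $X$ is positive), and the remaining $k-s$ columns from a factorization of the residual $Y$ minus its rank-$s$ truncation, which has norm $O(\|Y - X\|_\frob)$ and hence can be represented by factor columns of norm $O(\|Y-X\|_\frob^{1/2})$, all close to zero. This exhibits $Y$ in the image of any fixed neighborhood of $(U_0,V_0)$.

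Once~(ii) is established, Proposition~\ref{prop: Lojasiewicz for real-analytic manifolds} immediately yields~\eqref{L} at $X$, since its hypotheses are exactly~(i) and~(ii) together with $f$ real-analytic and $\tau$ componentwise real-analytic; the proposition's proof then supplies $\delta, \Lambda, \theta$ via the chain rule and the classical \L{}ojasiewicz inequality applied to $f \circ \tau$. One should also remark that the identification $\tau'(t)h \mapsto \delta U\, V^\trans + U\, \delta V^\trans$ indeed has image contained in $T_{\tau(t)}\mM_{\le k}$ — this is part of the general argument in Proposition~\ref{prop: Lojasiewicz for real-analytic manifolds} and needs nothing rank-specific — and that $\|\tau'(t)\|$ is locally bounded, which is clear since $\tau$ is polynomial. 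As $X$ was arbitrary, \eqref{L} holds at every point of $\mM_{\le k}$.

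The main obstacle is condition~(ii), specifically the uniform closeness of the factor pair: one must be careful that in the rank-deficient case $s < k$ the "extra" $k-s$ columns of the factors, which are zero at $t_0$, can genuinely be taken small for all nearby $Y$, including those $Y$ whose rank strictly exceeds $s$. The square-root scaling of those residual factor columns (reflecting that $\mM_{\le k}$ has a cone-like singularity at $X$, cf. Theorem~\ref{prop: form of tangent cone}) is the delicate point; it is exactly what makes $\tau$ fail to be an immersion at $t_0$ while still being surjective onto a full neighborhood, so the weaker hypothesis~(ii) of Proposition~\ref{prop: Lojasiewicz for real-analytic manifolds} — rather than the submanifold remark following it — is what must be used. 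Lemma~\ref{lem: M_s real-analytic submanifold} handles only the smooth stratum $\mM_s$ and is not by itself sufficient here; it is, however, a useful ingredient for locating the rank-$s$ part of $Y$ continuously.
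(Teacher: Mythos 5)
Your proposal is correct and follows essentially the same route as the paper: the same factorization map $\tau$ (your $\R^{m\times k}\times\R^{n\times k}$ parametrization is just the paper's map with the factors split into $s$ and $k-s$ column blocks), the same reduction of condition (ii) to splitting a nearby $Y\in\mM_{\le k}$ into its best rank-$s$ approximation $Y_s$ (close to $X$ by the triangle inequality and optimality of the truncation) plus a residual of rank at most $k-s$ and norm $O(\|Y-X\|_\frob)$ factored with columns of norm $O(\|Y-X\|_\frob^{1/2})$, followed by Proposition~\ref{prop: Lojasiewicz for real-analytic manifolds}. The only real difference is that where you gesture at ``continuity of a QR/SVD-based factorization'' to place the factors of $Y_s$ near $(U_0,V_0)$, the paper makes this step precise by invoking Lemma~\ref{lem: M_s real-analytic submanifold}: the map $(U,V)\mapsto UV^\trans$ is a submersion onto $\mM_s$ near $(U_0,V_0)$, hence open, so every rank-$s$ matrix sufficiently close to $X$ is hit from any prescribed neighborhood of $(U_0,V_0)$ --- the cleaner way to close the one slightly hand-wavy point in your argument.
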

\changed{
\begin{proof}
Let $X \in \mM_{\le k}$, $\rank(X) = s$. We assume $s > 0$; otherwise the proof requires some obvious notational modifications. There exist matrices $U_0 \in \R^{m \times s}$ and $V_0 \in \R^{n \times s}$, both of rank $s$, such that $X = U_0^{} V_0^\trans$. We consider the map
\begin{gather*}
\tau \vcentcolon \mN = \R^{m \times s} \times \R^{n \times s} \times \R^{m \times (k-s)} \times \R^{n \times (k-s)} \to \R^{m \times n}, \\
(U,V,U_{k-s},V_{k-s}) \mapsto U V^\trans + U_{k-s}^{} V_{k-s}^\trans.
\end{gather*}
This map is obviously real-analytic, its image is $\mM_{\le k}$, and $\tau(U_0,V_0,0,0) = X$. We need to prove property (ii) in Proposition~\ref{prop: Lojasiewicz for real-analytic manifolds}. Let $\widetilde{\mN} \subseteq \mN$ be an open neighborhood of $(U_0,V_0,0,0)$. We may assume that this neighborhood is so small such that for all $(U,V,U_{k-s},V_{k-s}) \in \widetilde{\mN}$ it holds that $\rank(U) = \rank(V) = s$. Lemma~\ref{lem: M_s real-analytic submanifold} then implies that the map $(U,V,U_{k-s},V_{k-s}) \mapsto UV^\trans$ is a submersion from $\widetilde{\mN}$ on the smooth manifold $\mM_s$, and as such an open map~\cite[\S(16.7.5)]{DieudonneIII}. 
Consequently, for small enough 
$\delta > 0$ we can claim that $\tau(\widetilde{\mN})$ contains \emph{all} matrices of the form
\(
Y_s + Y_{k-s}
\)
with $\rank(Y_s) = s$, $\rank(Y_{k-s}) \le k-s$, and $\| Y_s - X \|_\frob < 2\delta$. By semicontinuity of rank, we can choose $\delta$ so small that $\rank(Y) \ge s$ for all 
$Y \in B_\delta(X)$, where $B_\delta(X)$ denotes the open ball in $\R^{m \times n}$ of radius $\delta$ (in Frobenius norm) with center $X$. Let $Y_s$ denote the best rank-$s$ approximation (in Frobenius norm) of $Y \in B_\delta(X) \cap \mM_{\le k}$. As $Y_s$ is obtained by truncating a singular value decomposition of $Y$, we have $Y = Y_s + Y_{k-s}$ with $\rank(Y_{k-s}) \le k-s$, and
\[
 \| Y_s - X \|_\frob \le \| Y_s - Y \|_\frob + \| Y - X \|_\frob \le \| X - Y \|_\frob  + \| Y - X \|_\frob < 2\delta,
\]
the second inequality holding since $\rank(X) = s$. By the previous considerations, this implies $Y \in \tau(\widetilde{\mN})$. We thus have shown that $\tau(\widetilde{\mN})$ contains the relatively open set $B_\delta(X) \cap \mM_{\le k}$.
\qquad\end{proof}
}

We now have collected all requirements to apply Theorem~\ref{th: abstract theorem for line-search} or Corollary~\ref{cor: smooth line-search result}. For concreteness, we consider a particular algorithm where the search direction equals the projected antigradient and the retraction is obtained by best rank-$k$ approximation. It is denoted as Algorithm~\ref{Alg: SD for matrix manifold}.

\begin{algorithm}
\KwIn{Starting guess $X_0 \in \mM_{\le k}$, $\beta,c \in (0,1)$.}
\For{n=0,1,2,\dots}{
Calculate a projection $G_n$ of $- \nabla f(X_n)$ on $T_{X_n} \mM_{\le k}$ using Algorithm~\ref{Alg: Calculate gradient}\;
Choose $\bbeta_n \ge 1$, and find Armijo point $\alpha_n$ for $X_n, G_n, \bbeta_n, \beta, c$\;
Set $X_{n+1}$ to be a best approximation (with respect to Frobenius norm) of $X_n + \alpha_n G_n$ of rank at most $k$.
}
\caption{Projected steepest descent with line-search on $\mM_{\le k}$}\label{Alg: SD for matrix manifold}
\end{algorithm}

\begin{theorem}\label{th: result for low-rank steepest descent}
Let $f$ be real-analytic and bounded below. If the sequence $(X_n)$ generated by Algorithm~\ref{Alg: SD for matrix manifold} possesses a cluster point $X^*$, then it is its limit. If further $\rank(X^*) = k$, then $g^-(X^*) = 0$, and the convergence rate estimates of Theorem~\ref{th: main theorem} apply.
\end{theorem}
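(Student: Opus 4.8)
The plan is to recognize Algorithm~\ref{Alg: SD for matrix manifold} as precisely the instance of the abstract scheme Algorithm~\ref{A: abstract Alg} obtained by taking $\mM = \mM_{\le k}$, $\omega = 1$, the projected antigradient as search direction, and the metric projection onto $\mM_{\le k}$ in the Frobenius norm as retraction; the theorem then follows by invoking Corollaries~\ref{cor: mere convergence of line-search} and~\ref{cor: smooth line-search result}.

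First I would verify the structural ingredients. (a) The vector $G_n$ returned by Algorithm~\ref{Alg: Calculate gradient} is, by construction and Corollary~\ref{cor: projection of negative gradient on tangent cone}, a Euclidean best approximation of $-\nabla f(X_n)$ in $T_{X_n}\mM_{\le k}$; hence by Proposition~\ref{prop: existence of decrease direction} it satisfies the angle condition~\eqref{eq: angle condition} with $\omega = 1$, vanishes exactly when $g_n^- = 0$ (so the side condition of Algorithm~\ref{A: abstract Alg} on choosing $\xi_n = 0$ is met automatically), and $\|G_n\|_\frob = g^-(X_n)$. Consequently, whenever $g_n^- > 0$ the ratio $g_n^-/\|G_n\|_\frob$ equals $1$, so the prescription $\bbeta_n \ge 1$ in Algorithm~\ref{Alg: SD for matrix manifold} ensures the admissibility condition~\eqref{eq: condition for gamma_n}; when $g_n^- = 0$ the iteration is stationary, and since $f$ is bounded below the Armijo point is always well defined. (b) The retraction is exactly the metric projection~\eqref{eq: best approximation retraction}, which is a valid retraction in the sense of Definition~\ref{def: retraction} by~\eqref{eq: valid retraction on variety} and obeys the global bound~\eqref{eq: upper bound for retraction} with $M = 1 + 2^{-1/2}$ by Proposition~\ref{prop: stable retraction}. (c) Since $f$ is real-analytic, the \L{}ojasiewicz inequality~\eqref{L} holds at \emph{every} point of $\mM_{\le k}$, in particular at $X^*$, by Theorem~\ref{th: Lojasiewicz for matrices}. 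With (a)--(c) in hand, Corollary~\ref{cor: mere convergence of line-search} shows that $(X_n)$ satisfies~\eqref{A1} (with $\sigma = cM^{-1}$) and~\eqref{A2}, and therefore converges to $X^*$; this settles the first assertion.

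For the second assertion I would apply Corollary~\ref{cor: smooth line-search result}, whose hypotheses are: real-analytic, bounded-below $f$ (given); metric projection as retraction (checked); a cluster point satisfying~\eqref{L} (checked); and an open neighborhood $\mO \subseteq \R^{m\times n}$ of $X^*$ with $\mM_{\le k}\cap\mO$ a smooth embedded submanifold. The rank assumption enters exactly here: if $\rank(X^*) = k$, then, by lower semicontinuity of rank, the set $\mO = \{ Y \in \R^{m \times n} \vcentcolon \rank(Y) \ge k \}$ is open, contains $X^*$, and satisfies $\mM_{\le k}\cap\mO = \mM_k$, which by Theorem~\ref{th: smoothness of full-rank manifolds} is a smooth embedded submanifold of $\R^{m\times n}$ on which, moreover, $g^-$ is continuous. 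Hence Corollary~\ref{cor: smooth line-search result} yields~\eqref{A1}--\eqref{A3}, so $g_n^- \to g^-(X^*) = 0$ and the convergence-rate estimate of Theorem~\ref{th: main theorem} applies.

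I expect the proof to be essentially bookkeeping: the two genuinely substantial facts — the \L{}ojasiewicz inequality on the real-algebraic variety $\mM_{\le k}$, and the asymptotic analysis of the Armijo step-size under a metric-projection retraction near a smooth limit point — are already contained in Theorem~\ref{th: Lojasiewicz for matrices} and in Theorem~\ref{th: abstract theorem for line-search}/Corollary~\ref{cor: smooth line-search result}. The only point requiring a moment's care is the verification of the step-length condition~\eqref{eq: condition for gamma_n}, which hinges on the identity $\|G_n\|_\frob = g^-(X_n)$ for the best-approximation search direction; everything else is a direct appeal to results established earlier in the paper.
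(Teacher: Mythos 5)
Your proposal is correct and follows essentially the same route as the paper's own proof: convergence via Theorem~\ref{th: Lojasiewicz for matrices}, Proposition~\ref{prop: existence of decrease direction}, and Corollary~\ref{cor: mere convergence of line-search}, and then, under the rank-$k$ assumption, an appeal to Theorem~\ref{th: smoothness of full-rank manifolds} and Corollary~\ref{cor: smooth line-search result}. Your explicit checks of the $\omega=1$ angle condition, the bound $M = 1 + 2^{-1/2}$, and the admissibility of $\bbeta_n \ge 1$ via $\|G_n\|_\frob = g^-(X_n)$ are exactly the bookkeeping the paper leaves implicit.
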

\begin{proof}
The convergence of the sequence follows from Theorem~\ref{th: Lojasiewicz for matrices}, Proposition~\ref{prop: existence of decrease direction}, and Corollary~\ref{cor: mere convergence of line-search}. Due to Theorem~\ref{th: smoothness of full-rank manifolds}, the rest is an instance of Corollary~\ref{cor: smooth line-search result}. 
\qquad\end{proof}

\subsection{A method without retraction}\label{sec: a method without retraction}

It is possible to have a gradient-related search direction $\Xi_n$ such that $X_n + \alpha \Xi_n \in \mM_{\le k}$ for all $\alpha$. The idea is the same as in the proof of Proposition~\ref{prop: stable retraction}. By~\eqref{eq: form of G} and~\eqref{eq: tangent space Ms}, a projection $G$ of $- \nabla f_n$ consists of four, mutually orthogonal parts: 
\begin{equation*}\label{eq: four parts}
G_n = (\Pi_\mU \otimes \Pi_\mV)(-\nabla f_n) + (\Pi_{\mU^\bot} \otimes \Pi_\mV)(-\nabla f_n) + (\Pi_\mU \otimes \Pi_{\mV^\bot})(-\nabla f_n) + \Xi_{k-s,n},
\end{equation*}
with $\rank(\Xi_{k-s,n}) \le k-s$. Consider the two possible partial projections
\begin{equation}\label{eq: first candidate}
G_n^{(1)} = (\Pi_\mU \otimes \Pi_\mV)(-\nabla f_n) + (\Pi_\mU \otimes \Pi_{\mV^\bot})(-\nabla f_n) + \Xi_{k-s,n} = (\Pi_\mU \otimes I)(-\nabla f_n) + \Xi_{k-s,n}
\end{equation}
and
\begin{equation}\label{eq: second candidate}
G_n^{(2)} = (\Pi_\mU \otimes \Pi_\mV)(-\nabla f_n) + (\Pi_{\mU^\bot} \otimes \Pi_\mV)(-\nabla f_n) + \Xi_{k-s,n} = (I \otimes \Pi_\mV)(-\nabla f_n) + \Xi_{k-s,n}.
\end{equation}
Both are elements of the tangent cone at $X_n$ and satisfy $\rank(X_n + \alpha G_n^{(i)}) \le k$ for all $\alpha$, $i=1,2$. Assume that $\| G_n^{(1)} \|_\frob \ge \| G_n^{(2)} \|_\frob$. Then, by orthogonality arguments, $\| G_n^{(1)} \|_\frob^2 \ge \frac{1}{2}\| G_n \|_\frob^2$, and
\[
\langle \nabla f_n, G_n^{(1)}  \rangle_\frob = \| G_n^{(1)}  \|_\frob^2 \ge \frac{1}{\sqrt{2}} \| G_n \|_\frob \| G_n^{(1)}  \|_\frob = \frac{1}{\sqrt{2}} g_n^- \| G_n^{(1)}  \|_\frob.
\]
Thus $G_n^{(1)}$ satisfies the $\omega$-angle condition with $\omega = \frac{1}{\sqrt{2}}$. If $\| G_n^{(1)} \|_\frob \le \| G_n^{(2)} \|_\frob$, then $G_n^{(2)}$ satisfies this angle condition.

This leads us to Algorithm~\ref{Alg: without retraction}, which contains no retraction steps. Still, it shares the nice abstract convergence features with the projected steepest descent, even with a slightly extended statement in singular points (convergence rate).

\begin{algorithm}
\KwIn{Starting guess $X_0 \in \mM_{\le k}$, $\beta,c \in (0,1)$.}
\For{n=0,1,2,\dots}{
\uIf{$\| (\Pi_\mU \otimes I) (- \nabla f(X_n))\|_\frob \ge \| (I \otimes \Pi_\mV)( - \nabla f(X_n))\|_\frob$}{
Use $\Xi_n = G_n^{(1)}$ from~\eqref{eq: first candidate}\;
}
\Else{
Use $\Xi_n = G_n^{(2)}$ from~\eqref{eq: second candidate}\;
}
Choose $\bbeta_n \ge \sqrt{2}$, and find Armijo point $\alpha_n$ for $X_n, \Xi_n, \bbeta_n, \beta, c$\;
Form the next iterate
\[
X_{n+1} = X_n + \alpha_n \Xi_n.
\]
}
\caption{Descent method on $\mM_{\le k}$ without retraction}\label{Alg: without retraction}
\end{algorithm}

\begin{theorem}\label{th: result for method without retraction}
Let $f$ be real-analytic and bounded below. If the sequence $(X_n)$ generated by Algorithm~\ref{Alg: without retraction} possesses a cluster point $X^*$, then it is its limit, and the convergence rate estimates of Theorem~\ref{th: main theorem} apply. If further $\rank(X^*) = k$, then $g^-(X^*) = 0$.
\end{theorem}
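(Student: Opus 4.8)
The plan is to recognize Algorithm~\ref{Alg: without retraction} as a special instance of Algorithm~\ref{A: abstract Alg} in which the retraction is simply the \emph{identity} map $R(X,\Xi_X) = X + \Xi_X$. This is a legitimate retraction in the sense of Definition~\ref{def: retraction}, because by construction of $\Xi_n = G_n^{(i)}$ (see~\eqref{eq: first candidate}--\eqref{eq: second candidate}) the line $X_n + \alpha\Xi_n$ stays inside $\mM_{\le k}$ for all $\alpha \ge 0$; hence the defining limit~\eqref{eq: Retraction o of alpha} is identically zero and the bound~\eqref{eq: upper bound for retraction} holds with $M=1$. First I would check the remaining bookkeeping: that the search direction $\Xi_n$ satisfies the $\omega$-angle condition with $\omega = 1/\sqrt2$ (established in the discussion preceding the algorithm, using $\|\Xi_n\|_\frob^2 \ge \tfrac12\|G_n\|_\frob^2$ for whichever of $G_n^{(1)},G_n^{(2)}$ is selected); that $\Xi_n = 0$ precisely when $g_n^- = 0$; and that the choice $\bbeta_n \ge \sqrt2$ fulfils~\eqref{eq: condition for gamma_n}, since $g_n^-/\|\Xi_n\|_\frob \le \sqrt2$.

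With these preliminaries in place, Corollary~\ref{cor: mere convergence of line-search} together with the \L{}ojasiewicz inequality of Theorem~\ref{th: Lojasiewicz for matrices} (valid at every point of $\mM_{\le k}$ since $f$ is real-analytic) immediately yields that the cluster point $X^*$ is in fact the limit of $(X_n)$, and that~\eqref{A1}--\eqref{A2} hold along the sequence.

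For the convergence rate I would invoke Theorem~\ref{th: abstract theorem for line-search}, and here the situation is more favourable than for Algorithm~\ref{Alg: SD for matrix manifold}: since $R$ is the identity, $R(x_n,\hat\xi_n) - (x_n + \hat\xi_n)$ vanishes identically, so property~\eqref{eq: abstract property} holds with $C=0$ \emph{without} any smoothness hypothesis at $X^*$; condition (i), $\alpha_n\Xi_n \to 0$, follows from $x_{n+1}-x_n = \alpha_n\Xi_n$ and the convergence $X_n \to X^*$ just established; $f$ is bounded below by assumption; and the local Lipschitz bound~\eqref{eq: Lipschitz condition} on $\nabla f$ near $X^*$ is a consequence of analyticity. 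Theorem~\ref{th: abstract theorem for line-search} then gives~\eqref{A3}, hence $g_n^- \to 0$ and the rate estimates of Theorem~\ref{th: main theorem}, and this holds regardless of the rank of $X^*$ — this is the slightly extended statement in singular points mentioned before the algorithm.

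Finally, if $\rank(X^*) = k$, then by Theorem~\ref{th: smoothness of full-rank manifolds} a relatively open neighbourhood of $X^*$ in $\mM_{\le k}$ coincides with the smooth manifold $\mM_k$, on which $g^-$ is continuous (as remarked at the end of section~\ref{sec: optimality conditions}); therefore $g^-(X^*) = \lim_{n\to\infty} g_n^- = 0$, i.e. $X^*$ is a critical point. I do not anticipate a genuine obstacle here, as the argument is entirely an assembly of results already proved; the only point requiring a little care is verifying that all hypotheses of Theorem~\ref{th: abstract theorem for line-search}, in particular $\alpha_n\Xi_n \to 0$, are met using only the convergence of $(X_n)$ and the trivial form of the retraction.
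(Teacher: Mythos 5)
Your proposal is correct and follows essentially the same route as the paper: cast Algorithm~\ref{Alg: without retraction} into the abstract framework, get convergence from Theorem~\ref{th: Lojasiewicz for matrices} and Corollary~\ref{cor: mere convergence of line-search}, obtain the rate from Theorem~\ref{th: abstract theorem for line-search} with~\eqref{eq: abstract property} holding trivially (even at singular limits), and use continuity of $g^-$ on $\mM_k$ for the final claim. The only cosmetic difference is that the paper keeps $R$ as the metric projection (which is globally defined on the tangent cones and merely \emph{acts} as the identity on the chosen directions $G_n^{(1)},G_n^{(2)}$), whereas declaring the identity itself to be the retraction would require it to map the whole tangent cone into $\mM_{\le k}$, which it does not.
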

\begin{proof}
Since $\rank(X_n + \alpha_n \Xi_n) \le k$, we can formally write $X_{n+1} = R(X_n, \alpha_n \Xi_n)$ in the algorithm in order to get into the abstract framework (here $R$ is again retraction by best low-rank approximation). Then the mere convergence of the sequence follows again from Theorem~\ref{th: Lojasiewicz for matrices} and Corollary~\ref{cor: mere convergence of line-search}. The feature is now that~\eqref{eq: abstract property} is trivially satisfied since $R$ acts as identity; therefore the validity of convergence rate estimates follows from Theorem~\ref{th: abstract theorem for line-search} even if the limit point is singular (the Lipschitz condition~\eqref{eq: Lipschitz condition} follows from analyticity). We also have $g_n^-(X_n) \to 0$ from which we can conclude $g^-(X^*) = 0$ if $g^-$ is continuous in $X^*$. But this is the case if $X^* \in \mM_k$.
\qquad\end{proof}

Since it does not leave the feasible set, Algorithm~\ref{Alg: without retraction} is very elegant and saves some cost in every step of the backtracking to find the Armijo point. In applications, however, the retraction from rank (at most) $2k$ to rank $k$, as required in Algorithm~\ref{Alg: SD for matrix manifold}, is typically much less expensive than, for instance, a function value evaluation or the projection of the gradient. We hence expect that the saved retractions will seldom compensate for the less gradient-related search directions.

We checked this with a toy example of matrix completion in a setup similar to~\cite{Vandereycken2013}, using \emph{straightforward, comparably nonoptimized} MATLAB R2012b implementations of both algorithms (choosing $\beta = \frac{1}{2}$ and $c = 10^{-4}$) on a Linux workstation with six 3.2 GHz CPU cores and 6 GB of memory. The problem that was solved is
\begin{equation}\label{eq: matrix completion problem}
\min_{X \in \mM_{\le k}} \frac{1}{2} \| P_\Omega ( A - X ) \|_\frob^2,
\end{equation}
where $P_\Omega$ is the projector on a subset $\Omega$ of indices. The $n \times n$ matrix $A = UV^\trans$ of rank $r$ was generated by randomly generating the two $n \times r$ factor matrices $U$ and $V$ from a normal distribution. The size of $\Omega$ was chosen as $| \Omega | = \max(\mathtt{OS} \cdot (2kn - k^2), n \log n)$, which corresponds to an oversampling rate of at least $\mathtt{OS}$ when assuming $A$ to have rank $k$ (cf.~\cite{Vandereycken2013}), and $\Omega$ itself was drawn uniformly at random. As a starting guess we chose in all experiments a best rank-$k$ approximation of the antigradient $-\nabla f(0) = -P_\Omega(A)$. In both Algorithms~\ref{Alg: SD for matrix manifold} and~\ref{Alg: without retraction}, this choice of starting guess is formally equivalent to starting with zero and performing an exact line-search in the very first step.

In the first test the rank of $A$ was indeed set to be $r = k$, so that the global solution of~\eqref{eq: matrix completion problem} lies on the smooth part $\mM_k$ of $\mM_{\le k}$. For $n = 2000$, $k = 20$, and $\mathtt{OS} = 3$ ($94.03 \%$ missing entries), the relative errors,
\begin{equation}\label{eq: full error}
\frac{\| A - X_n \|_\frob}{\| A \|_\frob},
\end{equation}
as well as the relative errors on the visible index set,
\begin{equation}\label{eq: sample error}
\frac{\| P_\Omega ( A - X_n ) \|_\frob}{\| P_\Omega A \|_\frob} = \frac{\sqrt{2 f(X_n)}}{\| P_\Omega A \|_\frob},
\end{equation}
are plotted in Figure~\ref{fig: Alg3vsAlg4}. As one can see, Algorithm~\ref{Alg: without retraction} is inferior to Algorithm~\ref{Alg: SD for matrix manifold} with respect to both number of iterations and computation time (the latter is plotted just to give an impression). One might think that the relative performance of Algorithm~\ref{Alg: without retraction} improves for larger $k$. The plots for $k = 80$ do not support this hope (in this case only $76.48 \%$ entries are missing, which perhaps explains the faster error decay). \changed{On the other hand, we did not observe that the superiority of Algorithm 3 would become considerably more pronounced for larger matrices; plots looked very similar.}

\begin{figure}
\includegraphics[width=\textwidth]{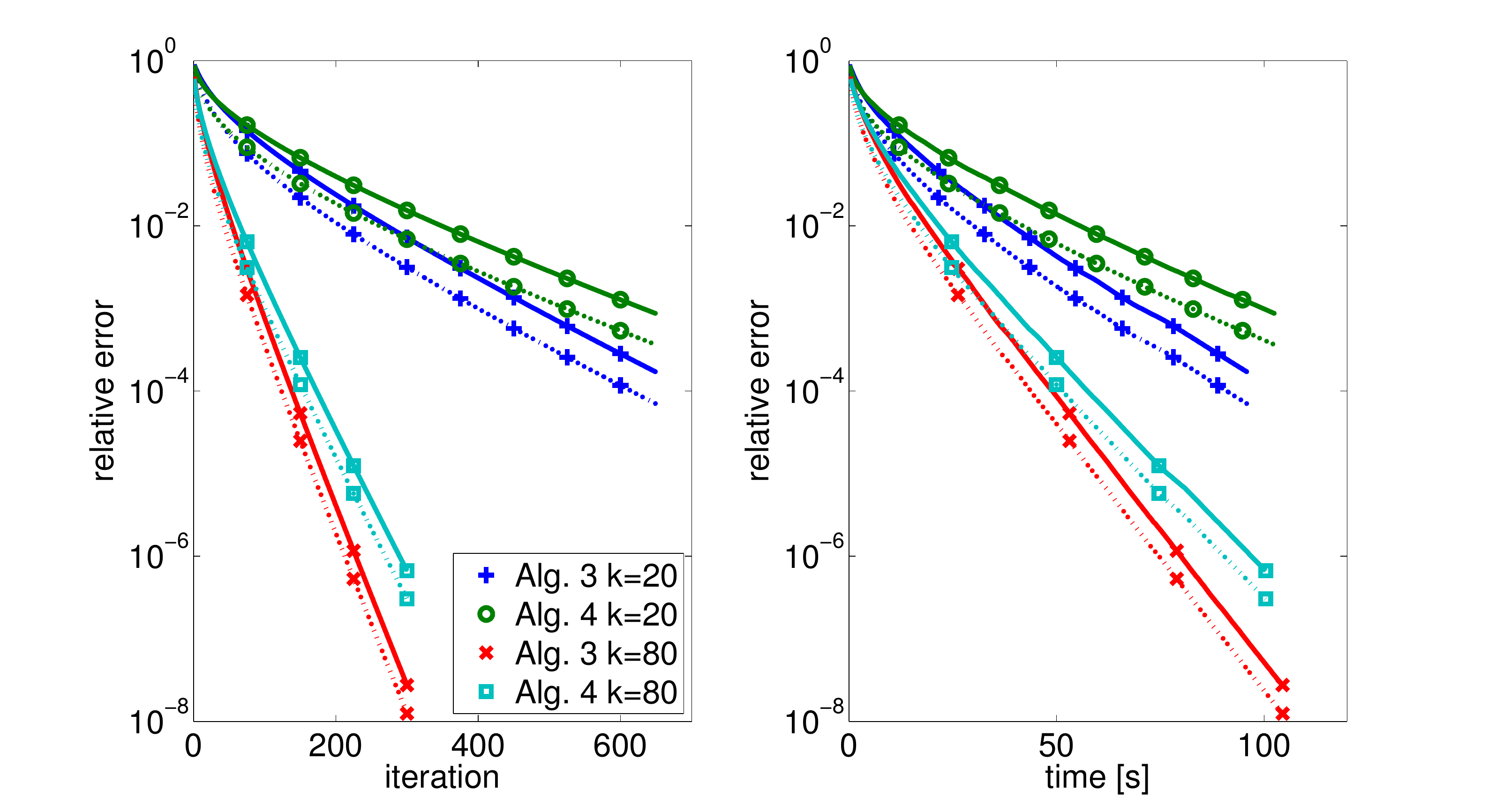}
\caption{Application of Algorithms~\ref{Alg: SD for matrix manifold} and~\ref{Alg: without retraction} to~\eqref{eq: matrix completion problem}  with $A \in \R^{2000 \times 2000}$, $\rank(A) = k$, for $k=20$ ($94.03 \%$ missing entries), and $k=80$ ($76.48 \%$ missing entries). Solid lines: relative errors~\eqref{eq: full error} (full index set); dashed lines: relative errors~\eqref{eq: sample error} (sample index set).}
\label{fig: Alg3vsAlg4}
\end{figure}

\changed{
We repeated the same experiments with matrices $A$ having rank $r = k/2$. Interestingly, Algorithm~\ref{Alg: without retraction} now performed better than Algorithm~\ref{Alg: SD for matrix manifold}, but both methods were unable to find a good approximation of the rank-deficient global solution $A$; see Figure~\ref{fig: Alg3vsAlg4rankdeficient}. In fact, we practically never encountered iterates whose $k$th singular value was less than $10^{-4}$. This confirms our expectation that our approach via line-search methods on $\mM_{\le k}$ does not contribute to the problem of rank estimation. Yet it served as an elegant theoretical vehicle to prove the convergence of Riemannian line-search methods on $\mM_k$ in ``almost every instance.'' As indicated in the introduction, a possible synthesis are rank-increasing strategies which subsequently optimize on varieties $\mM_{\le s}$ for a growing sequence of $s$~\cite{Mishraetal2013,Tanetal2014,UschmajewVandereycken2014}. 
}

Of course, Algorithms~\ref{Alg: SD for matrix manifold} and~\ref{Alg: without retraction} served here only as examples and are naturally inferior to more sophisticated line-search methods, such as the nonlinear CG methods used in~\cite{Vandereycken2013}, which use gradient information from previous iterates.

\begin{figure}
\includegraphics[width=\textwidth]{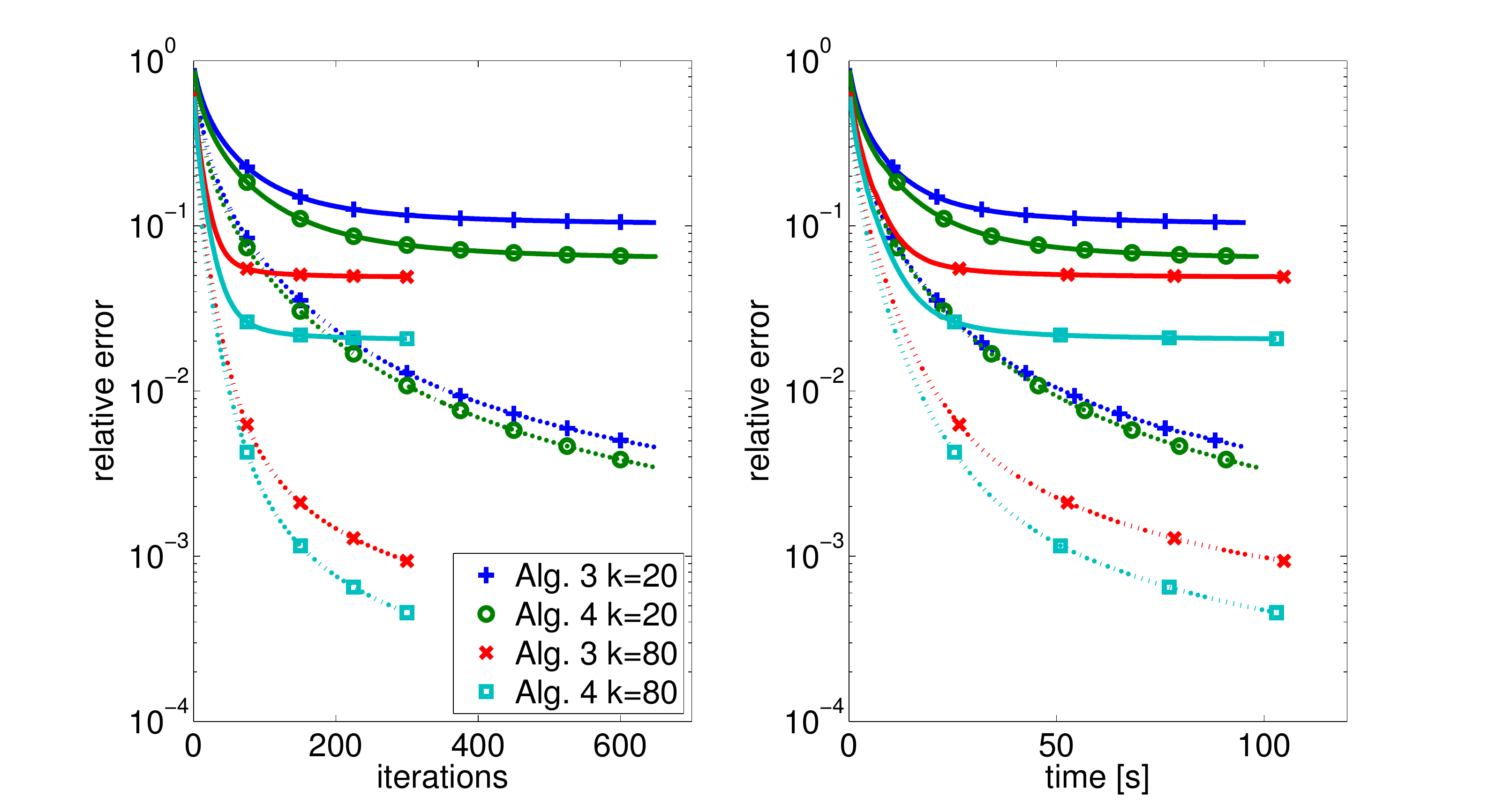}
\caption{Application of Algorithms~\ref{Alg: SD for matrix manifold} and~\ref{Alg: without retraction} to~\eqref{eq: matrix completion problem}  with $A \in \R^{2000 \times 2000}$, $\rank(A) = k/2$, for $k=20$ ($94.03 \%$ missing entries), and $k=80$ ($76.48 \%$ missing entries). Solid lines: relative errors~\eqref{eq: full error} (full index set); dashed lines: relative errors~\eqref{eq: sample error} (sample index set).}
\label{fig: Alg3vsAlg4rankdeficient}
\end{figure}

\section{Conclusion}

We extended available results on convergence of descent iterations on manifolds via \L{}ojasiewicz gradient inequality to gradient-related line-search methods on the real-algebraic variety $\mM_{\le k}$ of real $m\times n$ matrices of rank at most $k$, by explicitly taking the tangent cones at singular points into consideration. This made it possible to overcome some theoretical difficulties arising from the nonclosedness and unbounded curvature that one faces in the convergence analysis of Riemannian optimization methods on the smooth manifold $\mM_k$ of rank-$k$ matrices. So far, the results are applicable for real-analytic cost functions.

There is growing interest in treating low-rank tensor problems by Riemannian optimization, e.g., tensor completion~\cite{KressnerSteinlechnerVandereycken2013} or dynamical tensor approximation~\cite{KochLubich2010,Lubichetal2013,UschmajewVandereycken2013}. It would be important and interesting to extend the results to tensor varieties of bounded subspace ranks, e.g., bounded Tucker ranks, hierarchical Tucker ranks, or tensor train ranks~\cite{KoldaBader2009,HackbuschKuehn2009,Oseledets2011}. As these varieties take the form of intersections of low-rank matrix varieties~\cite{UschmajewVandereycken2013}, the results in this paper can likely be generalized in this direction.

\section*{Acknowledgments}

We thank Pierre-Antoine Absil who suggested an improvement of Corollary~\ref{cor: projection of negative gradient on tangent cone}, and Bart Vandereycken for useful hints to the literature.

\Appendix

\section{Proof of Theorem~\ref{th: main theorem}}

We can assume that $ g^-_n >  0$ for all $n$ since otherwise the sequence becomes stationary and there is nothing to prove. There will also be no loss of generality to assume that~\eqref{A1} and~\eqref{A2} hold for all $n$ and that $f(x^*)  = 0$. Then $0 \le f(x^*) \le f_n$ for all $n$, and the \L{}ojasiewicz gradient inequality at $x^* $ reads as
\begin{equation}\label{eq: Loj ineq in proof}
  f(x)^{1 - \theta} \leq \Lambda g^-(x)  
\end{equation}
whenever $\|x - x^* \| < \delta=\delta(x^*)$. Let $\epsilon \in (0,\delta]$, and assume $\|x_n - x^* \|  < \delta$. Then, by~\eqref{eq: Loj ineq in proof} and~\eqref{A1},
\[
\|x_{n} - x_{n+1} \| \le    \frac{\Lambda}{\sigma }     f_n^{\theta -1  } (  f_n - f_{n+1}).
\]
Using the fact that for $\varphi \in [f_{n+1}, f_n]$  there holds  $f_n^{\theta -1 } \leq \varphi^{\theta -1 } \leq f_{n+1}^{\theta -1 }$, we can estimate
\[
f_n^{\theta -1  } ( f_n - f_{n+1} ) \le \int_{f_{n+1}}^{f_n}  \varphi^{\theta -1 }\, \mathrm{d}\varphi = \frac{1}{\theta } (   f_n^{\theta}  - f_{n+1}^{\theta }  )
\]
and thus obtain 
\[
\|x_n - x_{n+1} \| \leq    \frac{\Lambda }{\sigma \theta }(   f_n^{\theta}  - f_{n+1}^{\theta }).
\]
More generally, let $\| x_k - x^* \| < \epsilon \le \delta$ for $n \le k < m$; we get by this argument that
\begin{equation}\label{eq: telescope estimate}
\| x_m - x_n \| \le \sum_{k=n}^m \| x_{k+1} - x_k \| \le \sum_{k = n}^m \frac{\Lambda }{\sigma \theta } (   f_k^{\theta}  - f_{k+1}^{\theta }  ) = \frac{\Lambda }{\sigma \theta } (   f_n^{\theta}  - f_m^{\theta }  ) \le \frac{\Lambda }{\sigma \theta } f_n^{\theta}.
\end{equation}
Since $x^*$ is an accumulation point, we can pick $n$ so large that (recall that $f$ is continuous and $f(x^*) = 0$)
\[
 \|x_n - x^*\| < \frac{\epsilon}{2} \quad \text{and} \quad \frac{\Lambda}{\sigma \theta}  f_n^{\theta}  < \frac{\epsilon}{2}.
\]
Then~\eqref{eq: telescope estimate} inductively implies $\| x_m - x^*\| < \epsilon$ for all $m \ge n$. This proves that $x^*$ is the limit point of the sequence, and, by~\eqref{A3}, $g_n^- \to 0$. 

To estimate the convergence rate, let $r_n = \sum_{k = n}^{\infty} \| x_{k+1} - x_k\|$. Then $\|x_n - x^*\| \le r_n$, so it suffices to estimate the latter. By~\eqref{eq: telescope estimate},~\eqref{eq: Loj ineq in proof}, and~\eqref{A3}, there exists $n_0 \ge 1$ such that for $n \ge n_0$ it holds that
\[
r_n^{\frac{1 - \theta}{\theta}}
\le \left( \frac{\Lambda}{\sigma \theta}\right)^{\frac{1 - \theta}{\theta}}  f_n^{1 - \theta}
\le \left( \frac{\Lambda}{\sigma \theta}\right)^{\frac{1 - \theta}{\theta}}  \frac{\Lambda}{\kappa} \|x_{n+1} - x_n\| 
= \left( \frac{\Lambda}{\sigma \theta}\right)^{\frac{1 - \theta}{\theta}}  \frac{\Lambda}{\kappa} (r_n - r_{n+1}),
\]
that is,
\begin{equation}\label{eq: recursive estimate}
r_{n+1} \le r_n - \nu r_n^{\frac{1-\theta}{\theta}}
\end{equation}
with $\nu = ( \frac{\Lambda}{\sigma \theta})^{\frac{\theta-1}{\theta}}  \frac{\kappa}{\Lambda}$. Now, if $\theta = 1/2$, we get from~\eqref{eq: recursive estimate} that $\nu \in (0,1)$, and
\[
r_n \le r_{n_0} (1 -\nu)^{n-n_0}  \left(e^{\ln (1 - \nu)}\right)^n
\]
for $n \ge n_0$. The case $0 < \theta < 1/2$ is more delicate. We follow Levitt~\cite{Levitt2012}: put $p = \frac{\theta}{1 - 2\theta}$, $C \ge \max((\frac{\nu}{p})^{-p}, r_{n_0} n_0^{-p})$, and $s_n = Cn^{-p}$; then $s_{n_0} \ge r_{n_0}$, and
\[
s_{n+1} = s_n(1 + n^{-1})^{-p} \ge s_n(1 - pn^{-1}) = s_n - \frac{p}{C^{1/p}}s_n^{\frac{p+1}{p}} \ge s_n - \nu s_n^{\frac{p+1}{p}} = s_n - \nu s_n^{\frac{1-\theta}{\theta}}
\]
(the first inequality holding by convexity of $x^{-p}$). Using induction, it now follows from~\eqref{eq: recursive estimate} that $r_n \le s_n$ for all $n \ge n_0$, which finishes the proof.\qquad\endproof

\bibliographystyle{siam}
\bibliography{lojasiewicz}

\end{document}